\newtheorem{thrm}{Theorem}[section]
\newtheorem{cor}[thrm]{Corollary}
\newtheorem{lem}[thrm]{Lemma}
\newtheorem{prop}[thrm]{Proposition}
\theoremstyle{definition}
\newtheorem{defn}[thrm]{Definition}
\newtheorem{rem}[thrm]{Remark}
\crefname{thrm}{Theorem}{Theorems}
\crefname{lem}{Lemma}{Lemmas}
\crefname{cor}{Corollary}{Corollaries}
\crefname{prop}{Proposition}{Propositions}
\crefname{defn}{Definition}{Definitions}
\crefname{exm}{Example}{Examples}
\crefname{rem}{Remark}{Remarks}
\crefname{section}{Section}{Sections}
\crefname{equation}{\unskip}{\unskip}
\crefname{enumi}{\unskip}{\unskip}
\crefname{subsection}{Subsection}{Subsections}
\newcommand{\mylabel}[2]{#2\def\@currentlabel{#2}\label{#1}}
\renewcommand{\iff}{\Leftrightarrow}
\newcommand{\impl}{\Rightarrow}
\newcommand{\id}{\mathrm{id}}
\newcommand{\cS}[1]{\mathcal S{(#1)}}
\newcommand{\cD}{\mathcal D}
\newcommand{\U}[1]{\mathcal U{(#1)}}
\newcommand{\cF}{\mathcal F}
\newcommand{\bbN}{\mathbb N}
\newcommand{\End}{\mathrm{End}}
\newcommand{\Hom}{\mathrm{Hom}}
\newcommand{\Der}{\mathrm{Der}}
\newcommand{\PDer}{\mathrm{PDer}}
\newcommand{\D}{\mathrm{D}}
\newcommand{\PD}{\mathrm{PD}}
\DeclareMathOperator{\im}{im}
\newcommand{\pr}{\mathrm{pr}}
\newcommand{\m}{{}^{-1}}
\newcommand{\0}{\theta}
\newcommand{\ve}{\varepsilon}
\newcommand{\e}{\epsilon}
\newcommand{\af}{\alpha}
\newcommand{\bt}{\beta}
\newcommand{\lb}{\lambda}
\newcommand{\Lb}{\Lambda}
\newcommand{\f}{\varphi}
\newcommand{\s}{\sigma}
\newcommand{\vt}{\vartheta}
\newcommand{\dl}{\delta}
\newcommand{\Sg}{\Sigma}
\newcommand{\A}{\mathcal A}
\newcommand{\B}{\mathcal B}
\newcommand{\M}{\mathcal M}
\newcommand{\tl}{\tilde}
\newcommand{\wtl}{\widetilde}
\newcommand{\sst}{\subseteq}
\newcommand{\ol}{\overline}
\newcommand{\kpar}[1]{K_{\mathrm{par}}(#1)}
\newcommand{\la}{\cdot}
\newcommand{\emp}{(\phantom{x})}
\newcommand{\cb}{\partial}
\newcommand{\arr}[1]{\overset{#1}{\to}}
\begin{document}

\title[Globalization of  cohomology in the sense of Alvares-Alves-Redondo]{Globalization of group cohomology in the sense of Alvares-Alves-Redondo}
	
	%
	\author{Mikhailo Dokuchaev}
	\address{Insituto de Matem\'atica e Estat\'istica, Universidade de S\~ao Paulo,  Rua do Mat\~ao, 1010, S\~ao Paulo, SP,  CEP: 05508--090, Brazil}
	\email{dokucha@gmail.com}
	\author{Mykola Khrypchenko}
	\address{Departamento de Matem\'atica, Universidade Federal de Santa Catarina, Campus Reitor Jo\~ao David Ferreira Lima, Florian\'opolis, SC,  CEP: 88040--900, Brazil \and Centro de Matem\'atica e Aplica\c{c}\~oes, Faculdade de Ci\^{e}ncias e Tecnologia, Universidade Nova de Lisboa, 2829--516 Caparica, Portugal}
	\email{nskhripchenko@gmail.com}
	\author{Juan Jacobo Sim\'on}
	\address{Departamento de Matem\'{a}ticas, Universidad de Murcia, 30071 Murcia, Spain}
	\email{jsimon@um.es} 
	\subjclass[2010]{Primary 20J06; Secondary  16W22, 18G60.}
	\keywords{Partial action, cohomology, globalization}
	\begin{abstract}
	 Recently E. R. Alvares, M. M. Alves and M. J. Redondo introduced  a cohomology   for a group $G$ with values in a module over the partial group algebra    $\kpar G,$  which is different from the partial group cohomology defined earlier by the  first two named authors of the present paper. Given a
unital partial action $\alpha $ of  $G$ on a (unital)  algebra $\A$ we con\-si\-der $\A$ as a  $\kpar G$-module in a natural way and study the globalization problem for the cohomology in the sense of  Alvares-Alves-Redondo with values in $\A.$ The problem is reduced to an extendibility property of cocycles. Furthermore, assuming that $\A$ is a product of blocks, we prove that any cocycle is globalizable, and globalizations of cohomologous cocycles are also cohomologous. As a consequence we obtain that the   Alvares-Alves-Redondo cohomology group  $H_{par}^n(G,\A)$ is isomorphic to the usual cohomology group $H^n(G,\M(\B)),$ where $\M(\B)$  is the multiplier algebra of  $\B$ and $\B$ is the algebra under the enveloping action of $\alpha .$  
	\end{abstract}

	\maketitle
	
	\section*{Introduction} In algebra  $2$-cocycles of groups  appeared as factor sets in the study of group extensions,  in  the  theory of projective group representations,  and in the construction of crossed products. Also $1$-cocycles were already hidden in Kummers' result,  widely known as Hilbert's Theorem 90.   Exel's idea of $C^*$-crossed products by twisted partial group actions \cite{E0} stimulated the purely algebraic treatment of the notion   in \cite{DES1}, the foundation of the theory  of partial projective group representations in  \cite{DN}, \cite{DN2},  \cite{DoNoPi} and \cite{DoSa}, 
as well as the definition and study  of group cohomology based on partial actions in \cite{DK} and  related extensions in  \cite{DK2} and  \cite{DK3}.\\ 

 A $G$-module over a group $G$  is an action of $G$ on an abelian group, and the starting point in \cite{DK} is the replacement of  a $G$-module by a partial $G$-module. The latter means  a unital partial action of $G$ on a commutative monoid. This fits nicely the concept   of a twisted partial group action in Exel's crossed products and the notion of the equivalence of twisted partial actions from \cite{DES2}.  The partial  group cohomology of   \cite{DK} found  applications to partial projective representations  \cite{DK}, \cite{DoSa},  to the construction of a Chase-Harrison-Rosenberg type seven terms exact sequence 
\cite{DoPaPi}, \cite{DoPaPiRo}, associated to a partial Galois extension of commutative rings \cite{DFP}, and  to the study of ideals of (global)  reduced $C^*$-crossed products  in \cite{KennedySchafhauser}. It also inspired the treatment of partial cohomology from the point of view of  Hopf algebras  \cite{BaMoTe}.\\

Observe  that the term ``partial $G$-module'', where $G$ is a group,  is being understood in two related but different senses.  Initially it was used under the name of  a ``partial $G$-space'' in \cite{DZh} when dealing with partial group representations.   The latter are closely related to partial actions (see \cite{D3}), and they  are governed by the partial group algebra $\kpar G$, whose purely algebraic version was introduced in \cite{DEP}.
So a  partial $G$-space  in \cite{DZh}  means a $\kpar G$-module, or equivalently,  a $K $-module $M$ equipped with a partial representation $G \to {\rm End}_{K } (M).$ This  is a natural point of view and it was adopted  in \cite{AAR} giving an  alternative approach to partial cohomology, with an appropriate notion of a trivial partial $G$-module. Despite the two ``partial cohomology'' theories can be defined using rather similar formulas, there is a significant difference between them due to the fact that the category of   $\kpar G$-modules is abelian, whereas the category of  partial $G$-modules from \cite{DK} is not even additive. A natural situation in which the two cohomologies can be compared is the case of  a $\kpar G$-module structure which comes from a unital partial action of $G$ on a commutative ring, and  a relation between the two approaches occurs only for $0$-cohomology.\\

The present paper deals mainly with the globalization problem of the cohomology  from  \cite{AAR}.  The technique worked out in  \cite{DES2} to deal with the globalization problem of twisted partial group actions on rings was  further developed in \cite{DKS} to obtain a globalization result for the partial group cohomology in the sense of \cite{DK}, where the partial $G$-module is not only a commutative semigroup but also a commutative unital ring, which is a direct product of indecomposable rings (blocks). It turns out, that our globalization technique can also be applied to the cohomology from \cite{AAR}, assuming,  that the base  $\kpar G$-module comes from a   partial action of $G$ on a product of (not necessarily commutative) blocks.\\

We begin by recalling some background in \cref{sec:Prelim}, and in \cref{sec:zeropar} we point out 
in \cref{H^0-and-H^0_par} the only direct relation we found between the cohomologies in  \cite{DK} and  \cite{AAR}. In the subsequent sections we deal only with cohomology in the sense of    \cite{AAR}. 
Given a $\kpar G$-module $M$,  the cohomology groups from \cite{AAR} with values in $M$ are denoted by    $H^n_{par}(G,M).$ Partial derivations were introduced 
in \cite{AAR}  as $K$-linear maps $\dl:\kpar G\to M$ satisfying a certain   Leibniz rule, and 
the cohomology group  $H^1_{par}(G,M)$ was characterized   as the quotient group of the partial derivations by the subgroup of the principal  partial derivations.  In  \cref{sec:1-partial} we correct an overview in the definition of a partial derivation given in \cite{AAR} and   offer one more interpretation of the elements of $H^1_{par}(G,M)$ which involves  some maps $f:G\to M$ satisfying a kind of $1$-cocycle identity (see \cref{H^1_par-in-terms-of-d:G->M}). The commutative subalgebra $B$ of  $\kpar G,$ defined in \cite{DE} in order to endow  $\kpar G$ with the structure of a partial crossed product, can be naturally  seen as a $\kpar G$-module, which plays the role of trivial module in the definition of the cohomology in   \cite{AAR}. In 
\cref{sec:projres} we  construct  a projective resolution   of the $\kpar G$-module $B$ adapting  some ideas of the free resolution from~\cite{DK}. \cref{H^n_par(G_M)=ker-dl^n-mod-im-dl^(n-1)} asserts that our projective resolution gives   the cohomology from     \cite{AAR}. Independently, Dessislava Kochloukova also produced  a projective resolution of $B$ exploring the crossed product structure of  $\kpar G.$  Dessislava's notes were the starting point for a collaboration which resulted in  the preprint \cite{AlDoKo}.\\ 

In \cref{sec:tildew} we begin our work with the globalization problem. Given a unital partial action of $G$ on a (not necessarily commutative) algebra $\A$ over a (commutative) ring $K$, we obtain  in a standard way a    $\kpar G$-module structure on $\A$ and study the globalization problem for the cohomology with values in such a module.    As a first step, we prove in \cref{w-glob-iff-exists-tilde-w} that a cocycle $w\in  Z^n_{par}(G, \A)$ is globalizable if and only if there exists a certain extension $\wtl w :G^n\to\A$  of $w$ which satisfies a ``more global'' $n$-cocycle equality.  Our global (usual) cocycles take values in the additive group of the multiplier algebra $\M (\B)$ of $\B$ where $\B$ is the algebra under the global action $\beta $ of $G,$ which is an enveloping action of $\alpha $ (i.e. a globalization of $\alpha $ with a certain minimality condition).   In order to construct $\wtl w,$ we assume, as in \cite{DES1} and \cite{DKS}, that $\A$ is a product of blocks. Our main technical work is done in \cref{sec:w'} in which for an arbitrary cocycle $w\in  Z^n_{par}(G, \A)$ we construct an $n$-cocycle $w' \in  Z^n_{par}(G, \A)$ cohomologous to 
$w$ and  suitable for a desired extension $\wtl{w'}$ (see 
\cref{w'-cohom-w}).  In \cref{sec:existsunique} we use  $\wtl{w'}$ to produce an extension  $\wtl{w}$ of $w$  needed for the application of \cref{w-glob-iff-exists-tilde-w}. This leads to
\cref{glob-exists} which asserts that any cocycle from $ Z^n_{par}(G, \A)$   is globalizable. Our uniqueness result is  \cref{uniqueness-of-glob} which says that globalizations of cohomologous $n$-cocycles from $ Z^n_{par}(G, \A)$  are also cohomologous. The two latter facts  imply our final result \cref{H^n(G_A)-cong-H^n(G_B)} which states that  
 $H_{par}^n(G,\A)$ is isomorphic to the classical cohomology group $H^n(G,\M(\B))$.

	\section{Preliminaries}\label{sec:Prelim}
	
	In this section we recall some basic notions and facts used in the sequel.   Our algebras will be over a commutative unital ring $K.$ A {\it partial action} $\alpha $ of a group $G$ on a non-necessarily unital   algebra   (or a ring) $\mathcal A$ is a family of two-sided ideals $\mathcal D_g $ of $\mathcal A$ and  algebra  (or ring) isomorphisms $\{\alpha _g : {\mathcal D} _{g\m} \to {\mathcal D}_g\mid g\in G\}$  such that
		\begin{enumerate}
			\item $\alpha _1 = {\rm id}_{\mathcal A},$
			\item $ \exists \; \alpha_h(a),  \; \exists \; \alpha_g (\alpha_h(a)) \; \impl   \;   \exists  \; \alpha _{gh}(a) \;\; \text{and} \;\;    \alpha _g (\alpha_h(a)) = \alpha _{gh}(a).$
		\end{enumerate}
		Here $\exists\;\af_g(a)$ means that $a\in\cD_{g\m}$.

	Replacing above the word ``algebra'' by ``semigroup'' we obtain the concept of a partial action of $G$ on a 
	semigroup. A partial action $\alpha $ is called {\it unital} if each ${\mathcal D}_g$ is unital, i.e.  
	${\mathcal D}_g = 1_g{\mathcal A},$ where $1_g$ is a central idempotent of ${\mathcal A}.$ If we replace ``algebra'' by ``set'', ``ideal'' by ``subset'' and ``isomorphism'' by ``bijection'', we come to the notion of a partial action of $G$ on an abstract  set.\\ 
	
	Let $\alpha = \{\af _g: {\cD }_{g\m} \to {\cD} _g\mid g\in G \}$  and $\alpha ' = \{\af ' _g: {\cD } '_{g\m} \to {\cD } ' _g\mid g\in G \}$
	be partial actions of a group $G$ on algebras $\A $ and $ \A ',$ respectively.  Recall from~\cite{AbadieTwo,DN2} that a {\it morphism}  of partial actions $(\A,\af) \to (\A',\af ')$   is a homomorphism of algebras $\varphi: \A\to \A '$ such that $\varphi(\cD _x)\subseteq \cD '_x$ and $\varphi\circ\af _x=\af '_x\circ\varphi$ on $\cD _{x^{-1}}$. Thus, partial actions of a group $G$ on algebras (rings, semigroups or sets) form a category.\footnote{Notice that isomorphic partial actions were called equivalent in \cite{DE}.}\\

	The partial group cohomology theory, as developed in \cite{DK}, is based on the concept of a unital partial $G$-module, which generally means a  unital partial action of $G$ on a commutative monoid. On the other hand, the cohomology theory introduced more recently in  \cite{AAR}  deals with (usual) modules over $\kpar G.$ 
	Recall that the {\it partial group algebra} $\kpar G$ of a group $G$ over a commutative ring $K$ can be seen as the semigroup algebra $K\cS{G}$, where $\cS G$ is the  monoid defined by R. Exel in \cite{E1} for an arbitrary group $G$ by means of generators $\{ [g] \; |\; g\in G\}$ and relations:
	\begin{align*}
		[g^{-1}][g][h]&=[g^{-1}][gh],\\
		{}[g][h][h^{-1}]&=[gh][h^{-1}],\\
		{}[g][1_G]&=[g],
	\end{align*}
	where $g, h \in G$ (consequently,  $[1_G][g]=[g]$). The elements  $e_g = [g] [g\m]$ are commuting idempotents in $ \cS G,$ and one easily obtains the useful relations 
	$$ 
	[g] e_h = e_{gh} [g],\ e_h [g] = [g] e_{g\m h},
	$$ for all $g,h \in G.$  According to \cite[Proposition 2.5]{E1}, each element   $s\in \cS G$ can be written as
	\begin{align}\label{stand-decomp}
		s = e_{h_1}  \ldots e_{h_k} [g],
	\end{align}  for some $k \in \bbN, h_1, \ldots , h_k, g \in G.$ One can assume that
	\begin{enumerate}
		\item $ h_i \neq   h_j $ for $i\neq j ,$\label{h_i-ne-h_j}
		\item $h_i \neq g$ and $h_i \neq 1_G$ for all $i.$\label{h_i-ne-g}
	\end{enumerate} 
	Under \cref{h_i-ne-g,h_i-ne-h_j} the decomposition \cref{stand-decomp} is unique up to the order of the idempotents $e_{h_i}$, and this was used by R. Exel to conclude that $\cS G$ is an inverse semigroup \cite[Theorem 3.4]{E1} (see also \cite{KL}).\\
	
	The defining relations of  $ \cS G$ are designed to guarantee that the map 
	\begin{align*}
		G\ni g \mapsto [g] \in   \cS G
	\end{align*}
	is a partial representation. More generally, we recall that a map $\pi : G \to S,$ where $S$ is a monoid, is called a {\it partial representation} (or a partial homomorphism)  if 
	
	\begin{enumerate}
		\item  $\pi (1_G) =  1_S,$
		\item $\pi (g\m)   \pi (g) \pi (h) = \pi (g\m)   \pi (gh),$
		\item $\pi (g)   \pi (h) \pi (h\m ) = \pi (g h)   \pi (h\m ),$
	\end{enumerate}  
	for all $g, h \in G.$ There is an evident bijective correspondence between the partial homomorphisms from $G$ into a unital  algebra $\A $ and the homomorphisms $\kpar G\to\A.$  
	
	The following fact, which is well known to the experts and whose verification is a direct exercise, results in a situation in which both partial cohomology theories are applicable.
	
	\begin{lem}\label{from-pMod-to-KparG-mod}
		Let $(\A,\af)$ be a unital partial action of a group $G$ on an algebra $\A.$ Then the map $\pi^\af:G\to\End_K(\A)$, $g\mapsto\pi^\af$, given by
		\begin{align}\label{pi^theta_g(a)=0_g(1_(g-inv)a)}
			\pi^\af_g(a)=\af_g(1_{g\m}a),
		\end{align} 
		is a partial representation of $G$. 
	\end{lem}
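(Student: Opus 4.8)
The plan is to verify, one by one, the three defining conditions of a partial representation into the monoid $\End_K(\A)$ (whose identity is $\id_\A$), working throughout with the standard description of a unital partial action: $\cD_g=1_g\A$ for a central idempotent $1_g\in\A$, $1_{1_G}=1_\A$, each $\af_g\colon\cD_{g\m}\to\cD_g$ is a $K$-linear ring isomorphism, and the auxiliary identities
\begin{align}\label{piaf-aux}
  \af_g(1_{g\m}1_h)=1_g1_{gh},\qquad \af_g\circ\af_h=\af_{gh}\ \text{ on }\ \cD_{h\m}\cap\cD_{(gh)\m}=1_{h\m}1_{(gh)\m}\A,
\end{align}
hold for all $g,h\in G$; the second of these is exactly the composition axiom in the definition of a partial action, and the first is its well-known consequence that $\af_g$ restricts to a ring isomorphism $\cD_{g\m}\cap\cD_h\to\cD_g\cap\cD_{gh}$. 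First I would note that $\pi^\af_g$ is $K$-linear, being the composite of multiplication by the central idempotent $1_{g\m}$ with $\af_g$, so that $\pi^\af_g\in\End_K(\A)$; and the first condition for a partial representation, namely $\pi^\af_{1_G}=\id_\A$, is immediate from $\af_{1_G}=\id_\A$ and $1_{1_G}=1_\A$.

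The heart of the proof is the composition formula
\begin{align}\label{piaf-comp}
  (\pi^\af_g\circ\pi^\af_h)(a)=\af_{gh}\big(1_{(gh)\m}1_{h\m}a\big)\qquad(g,h\in G,\ a\in\A),
\end{align}
which I would obtain as follows. Since $\af_h(1_{h\m}a)\in 1_h\A$, one may insert $1_h$ freely and write $(\pi^\af_g\circ\pi^\af_h)(a)=\af_g\big(1_{g\m}1_h\,\af_h(1_{h\m}a)\big)$. Next transport the idempotent $1_{g\m}1_h\in\cD_h$ across $\af_h$: as $\af_h$ is a ring isomorphism $\cD_{h\m}\to\cD_h$ and $\af_{h\m}(1_{g\m}1_h)=1_{h\m}1_{(gh)\m}$ by the first identity in \cref{piaf-aux} (applied with $g,h$ replaced by $h\m,g\m$), multiplicativity of $\af_h$ on $\cD_{h\m}$ gives $1_{g\m}1_h\,\af_h(1_{h\m}a)=\af_h\big(1_{h\m}1_{(gh)\m}a\big)$. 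Since $1_{h\m}1_{(gh)\m}a\in\cD_{h\m}\cap\cD_{(gh)\m}$, the second identity in \cref{piaf-aux} then yields $\af_g\big(\af_h(1_{h\m}1_{(gh)\m}a)\big)=\af_{gh}\big(1_{(gh)\m}1_{h\m}a\big)$, which is \cref{piaf-comp}.

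Granting \cref{piaf-comp}, the remaining two conditions follow by substitution. For the second, I would group $\pi^\af_{g\m}\circ\pi^\af_g\circ\pi^\af_h=(\pi^\af_{g\m}\circ\pi^\af_g)\circ\pi^\af_h$ and apply \cref{piaf-comp} with the pair $(g\m,g)$ to get $(\pi^\af_{g\m}\circ\pi^\af_g)(b)=\af_{1_G}(1_{1_G}1_{g\m}b)=1_{g\m}b$ for every $b\in\A$; hence the left-hand side sends $a$ to $1_{g\m}\af_h(1_{h\m}a)$, while \cref{piaf-comp} with the pair $(g\m,gh)$ shows the right-hand side sends $a$ to $\af_h(1_{h\m}1_{(gh)\m}a)$, and these agree by the identity $1_{g\m}\af_h(1_{h\m}a)=1_{g\m}1_h\af_h(1_{h\m}a)=\af_h(1_{h\m}1_{(gh)\m}a)$ established above. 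The third condition is even quicker: \cref{piaf-comp} with $(h,h\m)$ gives $(\pi^\af_h\circ\pi^\af_{h\m})(a)=1_ha$, so $(\pi^\af_g\circ\pi^\af_h\circ\pi^\af_{h\m})(a)=\af_g(1_{g\m}1_ha)$, and \cref{piaf-comp} with $(gh,h\m)$ gives $(\pi^\af_{gh}\circ\pi^\af_{h\m})(a)=\af_g(1_{g\m}1_ha)$ as well.

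The argument is, as the statement asserts, a direct exercise with no conceptual difficulty; the one place that needs genuine care is the transport of the idempotent $1_{g\m}1_h$ across $\af_h$ in the derivation of \cref{piaf-comp}, where one has to keep precise track of which ideals $\cD_g$ the various elements lie in. The two identities in \cref{piaf-aux} are exactly what make this bookkeeping mechanical, and once \cref{piaf-comp} is in hand the three conditions reduce to routine substitutions.
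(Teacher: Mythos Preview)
Your proof is correct. The paper does not actually supply a proof of this lemma---it simply states that the fact ``is well known to the experts and whose verification is a direct exercise''---so your write-up fills in exactly the details the paper omits, and your derivation of the key composition formula $(\pi^\af_g\circ\pi^\af_h)(a)=\af_{gh}(1_{(gh)\m}1_{h\m}a)$ via the standard identities $\af_g(1_{g\m}1_h)=1_g1_{gh}$ and $\af_g\circ\af_h=\af_{gh}$ on $\cD_{h\m}\cap\cD_{(gh)\m}$ is the expected route.
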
 It follows that  \cref{pi^theta_g(a)=0_g(1_(g-inv)a)} induces a $\kpar G$-module structure on $\A$ by means of
	\begin{align}\label{[g]a=pi^0_g(a)}
		[g]\cdot a=\pi^\af_g(a).
	\end{align}

	\section{The Alvares-Alves-Redondo cohomology}

	\subsection{The $0$-th cohomology group}\label{sec:zeropar}

	We begin with  the situation of \cref{from-pMod-to-KparG-mod} with commutative $\A $, so that we shall consider partial $G$-modules in a more restricted sense than  in \cite{DK}.  More precisely,  by a {\it partial $G$-module} we  shall mean  a commutative partial  $G$-module algebra\footnote{This is Hopf theoretic terminology (see \cite{CaenJan}).}, i.e.  a
	pair $(\A,\af)$, where $\A$ is a commutative algebra over some fixed field $K$ and $\af$ is a  partial action of $G$ on $\A .$ Moreover, we shall assume that $\af $ is unital, which is also expressed  by saying that the partial $G$-module is {\it unital}.

	Recall from~\cite{DK} that, given a unital partial $G$-module $(\A,\af)$, the $0$-th partial cohomology group $H^0(G,\A)$ of $G$ with values in $(\A,\af)$ is
	\begin{align}\label{H^0-defn}
		H^0(G,\A)=\{a\in\U{\A}\mid \forall g\in G:\ \af_g(1_{g\m}a)=1_ga\}.
	\end{align}
	For a left $\kpar G$-module $M$ the $0$-th cohomology group $H^0_{par}(G,M)$ of $G$ with values in $M$ was defined in~\cite{AAR} as
	\begin{align}\label{H^0_par-defn}
		H^0_{par}(G,M)=\{m\in M\mid\forall g\in G:\  [g]\la m=e_g\cdot m\},
	\end{align}
	reminding that $e_g = [g][g\m] \in \kpar G$.
	
	\begin{prop}\label{H^0-and-H^0_par}
		Let $(\A,\af)$ be a unital partial $G$-module. Consider the induced $\kpar G$-module structure on the underlying $K$-vector space of $\A$. Then $H^0_{par}(G,\A)$ is a unital $K$-subalgebra of $\A$ and
		\begin{align}\label{H^0(G_A)=U(H^0_par(G_A))}
			H^0(G,\A)=\U{H^0_{par}(G,\A)}.
		\end{align}
	\end{prop}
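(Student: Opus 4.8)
The plan is to begin by making the $\kpar G$-action on $\A$ fully explicit. By \cref{from-pMod-to-KparG-mod} and \cref{[g]a=pi^0_g(a)}, $[g]$ acts as $a\mapsto\af_g(1_{g\m}a)$, and a one-line computation, using that $\af_{g\m}(1_ga)\in\cD_{g\m}$, shows that $e_g=[g][g\m]$ acts as multiplication by the central idempotent $1_g$. Substituting into \cref{H^0_par-defn} then gives
\begin{align*}
	H^0_{par}(G,\A)=\{a\in\A\mid\forall g\in G:\ \af_g(1_{g\m}a)=1_ga\},
\end{align*}
so that, comparing with \cref{H^0-defn}, one has $H^0(G,\A)=H^0_{par}(G,\A)\cap\U\A$. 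With this reformulation in hand, \cref{H^0(G_A)=U(H^0_par(G_A))} becomes the assertion that the units of the algebra $H^0_{par}(G,\A)$ are precisely the elements of $H^0_{par}(G,\A)$ that are units of $\A$.

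Next I would verify that $H^0_{par}(G,\A)$ is a unital $K$-subalgebra. Closure under addition and scalar multiplication is immediate, since each $\pi^\af_g$ and each of the multiplication operators $a\mapsto 1_ga$, $a\mapsto 1_{g\m}a$ is $K$-linear; the identity $1_\A$ belongs to $H^0_{par}(G,\A)$ because $\af_g(1_{g\m}\cdot 1_\A)=\af_g(1_{g\m})=1_g$; and for products one uses that $1_{g\m}$ is a central idempotent and that $\af_g$ restricts to an algebra isomorphism $\cD_{g\m}\to\cD_g$ to get $\af_g(1_{g\m}ab)=\af_g((1_{g\m}a)(1_{g\m}b))=(1_ga)(1_gb)=1_gab$ for $a,b\in H^0_{par}(G,\A)$. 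All of this is routine.

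The substance of \cref{H^0(G_A)=U(H^0_par(G_A))} is then the equality $\U{H^0_{par}(G,\A)}=H^0(G,\A)$. The inclusion $\subseteq$ is immediate: an element of $H^0_{par}(G,\A)$ that is invertible inside that algebra is, in particular, a unit of $\A$ (the two algebras share the identity $1_\A$), hence lies in $H^0_{par}(G,\A)\cap\U\A=H^0(G,\A)$. The reverse inclusion is the only real obstacle, and it amounts to showing that if $a\in H^0(G,\A)$ then $a\m\in H^0_{par}(G,\A)$. The idea is that $1_{g\m}a$ and $1_{g\m}a\m$ are mutually inverse in the ring $\cD_{g\m}$ with identity $1_{g\m}$; applying the isomorphism $\af_g$ and invoking $a\in H^0(G,\A)$, the elements $1_ga=\af_g(1_{g\m}a)$ and $\af_g(1_{g\m}a\m)$ become mutually inverse in $\cD_g$ with identity $1_g$; multiplying the relation $(1_ga)\af_g(1_{g\m}a\m)=1_g$ by $a\m$ in $\A$ and using $\af_g(1_{g\m}a\m)\in\cD_g=1_g\A$ yields $\af_g(1_{g\m}a\m)=1_ga\m$, which is exactly the defining condition for $a\m\in H^0_{par}(G,\A)$. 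Hence $a\in\U{H^0_{par}(G,\A)}$, and the two sets coincide. The one point requiring care throughout is keeping track of which ring carries which identity ($1_\A$, $1_g$, or $1_{g\m}$); beyond that bookkeeping, no difficulty arises.
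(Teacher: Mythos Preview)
Your proof is correct and follows the same route as the paper's: both reduce the $\kpar G$-module condition $[g]\cdot a=e_g\cdot a$ to the explicit identity $\af_g(1_{g\m}a)=1_ga$, then compare with \cref{H^0-defn}. Your write-up is in fact more complete than the paper's, which leaves the subalgebra verification and the inverse-closure step (that $a\in H^0_{par}(G,\A)\cap\U\A$ implies $a\m\in H^0_{par}(G,\A)$, i.e.\ that $\U{H^0_{par}(G,\A)}$ really equals $H^0_{par}(G,\A)\cap\U\A$) to the reader.
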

	\begin{proof}
		Indeed, by \cref{H^0_par-defn,pi^theta_g(a)=0_g(1_(g-inv)a),[g]a=pi^0_g(a)} an element $a\in\A$ belongs to $H^0_{par}(G,\A)$ if and only if for every $g\in G$
		\begin{align*}
			\af_g(1_{g\m}a)=\af_g(1_{g\m}\af_{g\m}(1_g a)),
		\end{align*}
		the latter being $\af_g(\af_{g\m}(1_g a))=1_g a$. Comparing this with \cref{H^0-defn}, we obtain the desired equality \cref{H^0(G_A)=U(H^0_par(G_A))}.
	\end{proof}

	\subsection{The $1$-st cohomology group}\label{sec:1-partial}

	Observe that the commutative subalgebra $B$ of $\kpar G$ considered in~\cite{AAR} (see also \cite{DE}) is exactly $KE(\cS G)$, where $E(\cS G)$ is the semilattice of idempotents of $\cS G$. The action of $\cS G$ on $E(\cS G)$ by conjugation
	\begin{align}\label{s-la-e=ses^(-1)}
		s\la e=ses\m
	\end{align}
	extends by linearity to a homomorphism $\kpar G\to \End_K(B)$, yielding thus a $\kpar G$-module structure on $B$. Let $\e:\kpar G\to B$ be the $K$-linear map defined on $s\in\cS G$ by
	\begin{align}\label{e(s)=ss^(-1)}
		\e(s)=ss\m.
	\end{align}  
	Observe from \cref{s-la-e=ses^(-1)} that $\e$ is a morphism of $\kpar G$-modules, and it coincides with the one defined in~\cite{AAR} before Lemma~3.2. 
	
	We now recall an interpretation of $H^1_{par}(G,M)$ obtained in~\cite{AAR}. Given a $\kpar G$-module $M$, a $K$-linear map $\dl:\kpar G\to M$ is called a {\it partial derivation}, if for all $s,t\in\cS G$
	\begin{align}\label{dl(st)=sdl(t)+e(t)dl(s)}
		\dl(st)=s\la\dl(t)+\e(st)\la\dl(s).
	\end{align}
	Notice that \cref{dl(st)=sdl(t)+e(t)dl(s)} is a corrected version of the definition given in~\cite{AAR}.

	The partial derivations of $\kpar G$ with values in $M$ form a $K$-space, which will be denoted by $\Der_{par}(G,M)$. A partial derivation $\dl\in\Der_{par}(G,M)$ is called {\it principal} (or inner), if there exists $m\in M$, such that for all $g\in G$
	\begin{align*}
		\dl([g])=[g]\la m-e_g\la m.
	\end{align*}
	The $K$-subspace of the principal partial derivations will be denoted by $\PDer_{par}(G,M)$. Theorem~3.4 from~\cite{AAR} states that there is an isomorphism of additive groups
	\begin{align}\label{H^1-cong-Der-mod-Int}
		H^1_{par}(G,M)\cong\Der_{par}(G,M)/\PDer_{par}(G,M).
	\end{align}
	
	We shall use the isomorphism \cref{H^1-cong-Der-mod-Int} to give another interpretation of the elements of $H^1_{par}(G,M)$ in terms of certain maps $f:G\to M$ satisfying a kind of $1$-cocycle identity.
	
	\begin{lem}\label{dl-on-idempotents}
		Let $\dl\in\Der_{par}(G,M)$. Then for any $e\in E(\cS G)$ we have 
		\begin{align}\label{dl(e)-is-zero}
			\dl(e)=0.
		\end{align}
	\end{lem}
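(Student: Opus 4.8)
The plan is to exploit the idempotency $e = e^2$ in $\cS G$ together with the Leibniz rule \cref{dl(st)=sdl(t)+e(t)dl(s)}, observing first that $\e(e) = ee\m = e$ since every $e \in E(\cS G)$ is a self-inverse idempotent. Applying the partial derivation formula to the product $e \cdot e = e$ gives
\begin{align*}
	\dl(e) = \dl(e\cdot e) = e\la\dl(e) + \e(e\cdot e)\la\dl(e) = e\la\dl(e) + e\la\dl(e) = 2(e\la\dl(e)).
\end{align*}
So the whole problem reduces to understanding the element $e\la\dl(e) \in M$, and in particular to showing it equals $\dl(e)$, which would force $\dl(e) = 2\dl(e)$, hence $\dl(e) = 0$.

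To see that $e \la \dl(e) = \dl(e)$, I would iterate once more: write $e = e \cdot e \cdot e$ and compute $\dl(e^3)$ two ways, or more directly, act by $e$ on both sides of the identity $\dl(e) = 2(e\la\dl(e))$. Acting by $e$ and using that $e\la(e\la m) = (ee)\la m = e\la m$ (since the module structure is an algebra homomorphism $\kpar G \to \End_K(M)$ and $e$ is idempotent), we get $e\la\dl(e) = 2(e\la\dl(e))$, so $e\la\dl(e) = 0$. Substituting back into $\dl(e) = 2(e\la\dl(e))$ yields $\dl(e) = 0$, as desired. Alternatively, one can avoid invoking characteristic-$2$ subtleties by noting that from $\dl(e) = 2(e\la\dl(e))$ and $e\la\dl(e) = 0$ the conclusion is immediate over any $K$.

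The only point requiring a little care is the computation $\e(e\cdot e) = \e(e) = e$: this uses \cref{e(s)=ss^(-1)} and the fact that in the inverse semigroup $\cS G$ every idempotent satisfies $e\m = e$, so $ee\m = ee = e$. I expect this to be the main (though minor) obstacle, in the sense that it is the one place where the specific structure of $\cS G$ — rather than formal manipulation — enters; everything else is a two-line application of the Leibniz identity and the idempotency of the module action of $e$. Once these pieces are in place the lemma follows in a few lines.
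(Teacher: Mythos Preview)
Your proof is correct and follows essentially the same approach as the paper: derive $\dl(e)=2(e\la\dl(e))$ from the Leibniz rule applied to $e=e^2$, then act by $e$ on both sides to conclude $e\la\dl(e)=0$, and substitute back. The paper's argument is line-for-line the same, only more terse.
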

	\begin{proof}
		Indeed, as $e^2=e$, we obtain by \cref{e(s)=ss^(-1),dl(st)=sdl(t)+e(t)dl(s)}
		\begin{align}\label{dl(e)=2edl(e)}
			\dl(e)=e\la\dl(e)+e\la\dl(e)=2e\la\dl(e).
		\end{align}
		Multiplying the both sides of \cref{dl(e)=2edl(e)} by $e$, we obtain 
			$e \cdot \dl(e)=2e\la\dl(e),$ which  gives $e\la\dl(e)=0.$ Then   \cref{dl(e)=2edl(e)}  implies  \cref{dl(e)-is-zero}. 
	\end{proof}
	
	\begin{lem}\label{dl(es)-formulas}
		For arbitrary $\dl\in\Der_{par}(G,M)$ and $s\in\cS G$, $e\in E(\cS G)$ one has
			\begin{enumerate}
				\item $\dl(es)=e\la\dl(s)$;\label{dl(es)=edl(s)}
				\item $\dl(se)=ses\m\la\dl(s)$.\label{dl(se)=ses^(-1)dl(s)}
			\end{enumerate}
	\end{lem}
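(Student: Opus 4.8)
The two identities are direct consequences of the Leibniz rule \cref{dl(st)=sdl(t)+e(t)dl(s)} together with \cref{dl-on-idempotents}. For \cref{dl(es)=edl(s)}, I would apply \cref{dl(st)=sdl(t)+e(t)dl(s)} with the pair $(e,s)$ in place of $(s,t)$, obtaining
\begin{align*}
	\dl(es)=e\la\dl(s)+\e(es)\la\dl(e).
\end{align*}
By \cref{dl-on-idempotents} the term $\dl(e)$ vanishes, so the whole second summand is $0$ and we are left with $\dl(es)=e\la\dl(s)$, which is \cref{dl(es)=edl(s)}.

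For \cref{dl(se)=ses^(-1)dl(s)}, I would instead apply \cref{dl(st)=sdl(t)+e(t)dl(s)} to the pair $(s,e)$, which gives
\begin{align*}
	\dl(se)=s\la\dl(e)+\e(se)\la\dl(s).
\end{align*}
Again $\dl(e)=0$ by \cref{dl-on-idempotents}, so the first summand drops out, and it remains to identify $\e(se)$. Using the definition \cref{e(s)=ss^(-1)} of $\e$ and the fact that $e\in E(\cS G)$ is a commuting idempotent, one computes $\e(se)=(se)(se)\m=se e\m s\m=ses\m$ (here $e\m=e$). Hence $\dl(se)=ses\m\la\dl(s)$, which is \cref{dl(se)=ses^(-1)dl(s)}.

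There is no real obstacle here; the only point requiring the slightest care is the manipulation of the inverse $(se)\m$ in $\cS G$, for which one uses that $\cS G$ is an inverse semigroup with $e$ idempotent, so $(se)\m=e\m s\m=es\m$ and therefore $\e(se)=se(es\m)=ses\m$ after absorbing the repeated idempotent. The rest is an immediate substitution into the Leibniz rule and an appeal to \cref{dl-on-idempotents}.
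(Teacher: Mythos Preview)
Your proof is correct and follows exactly the same approach as the paper's own proof: apply the Leibniz rule \cref{dl(st)=sdl(t)+e(t)dl(s)} with the appropriate substitution and invoke \cref{dl-on-idempotents} to kill the term containing $\dl(e)$. The paper's proof is in fact more terse than yours, merely indicating the substitutions without writing out the computation of $\e(se)=ses\m$.
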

	\begin{proof}
		Indeed, applying \cref{dl(st)=sdl(t)+e(t)dl(s)} with $s=e$ and $t=s$ and then using \cref{dl(e)-is-zero} we obtain \cref{dl(es)=edl(s)}. Similarly the application of \cref{dl(st)=sdl(t)+e(t)dl(s)} with $t=e$ together with \cref{dl(e)-is-zero} gives \cref{dl(se)=ses^(-1)dl(s)}.
	\end{proof}
	
	\begin{lem}\label{from-f-to-dl}
		Let $M$ be a $\kpar G$-module and $d:G\to M$ a map such that for all $g,h\in G$ 
		\begin{align}\label{e_gd(gh)=[g]d(h)+e_hd(g)}
			e_g\la d(gh)=[g]\la d(h)+e_{gh}\la d(g).
		\end{align}
		Then the $K$-linear map $\dl:\kpar G\to M$ defined by
		\begin{align}\label{dl(e[g])=edl([g])}
			\dl(e[g])=e\la d(g),
		\end{align}
		where $e\in E(\cS G)$, is a partial derivation.
	\end{lem}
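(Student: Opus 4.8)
The plan is: first check that $\dl(e[g])=e\la d(g)$ genuinely defines a $K$-linear map $\kpar G\to M$, then verify the Leibniz rule \cref{dl(st)=sdl(t)+e(t)dl(s)} on the basis $\cS G$ of $\kpar G$. Everything hinges on three elementary consequences of the hypothesis \cref{e_gd(gh)=[g]d(h)+e_hd(g)}, which I would extract at the outset. Putting $g=h=1_G$ and using $e_{1_G}=[1_G]=1_S$ gives $d(1_G)=0$. Putting $h=g\m$ then gives $0=[g]\la d(g\m)+d(g)$, i.e. $[g]\la d(g\m)=-d(g)$ for every $g$. Applying this with $g$ replaced by $g\m$ and then acting by $[g]$ yields $e_g\la d(g)=[g]\la\bigl([g\m]\la d(g)\bigr)=-[g]\la d(g\m)=d(g)$. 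The identity $e_g\la d(g)=d(g)$ is the key technical point and will be used over and over.

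For well-definedness, recall that $\cS G$ is a $K$-basis of $\kpar G$ and that by the standard decomposition \cref{stand-decomp} every $s\in\cS G$ can be written $s=e[g]$ with $e\in E(\cS G)$, $g\in G$, so one must see that $e\la d(g)$ does not depend on the chosen presentation. Bringing any such $e[g]$ to the reduced form satisfying \cref{h_i-ne-h_j,h_i-ne-g} only absorbs factors $e_{1_G}=1_S$ and $e_g$ (the latter because $e_g[g]=[g]$), so by the uniqueness statement quoted after \cref{stand-decomp} the component $g$ is determined by $s$. If now $s=e[g]=e'[g]$, right multiplication by $[g\m]$ gives $ee_g=e'e_g$, whence, using $e_g\la d(g)=d(g)$ and the commutativity of $B=KE(\cS G)$, $e\la d(g)=e\la(e_g\la d(g))=(ee_g)\la d(g)=(e'e_g)\la d(g)=e'\la d(g)$. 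Thus $\dl$ is well defined and $K$-linear, with $\dl([g])=\dl(e_g[g])=e_g\la d(g)=d(g)$.

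To verify the Leibniz rule, take arbitrary $s=e[g]$, $t=f[h]$ in $\cS G$ ($e,f\in E(\cS G)$, $g,h\in G$). The relations $[g]e_h=e_{gh}[g]$ give $[g]f=f'[g]$ for a suitable $f'\in E(\cS G)$, and $[g][h]=e_g[gh]$, so $st=ef'e_g[gh]$; hence $\dl(st)=(ef'e_g)\la d(gh)=ef'\la\bigl(e_g\la d(gh)\bigr)$, and substituting \cref{e_gd(gh)=[g]d(h)+e_hd(g)} splits this as $ef'\la\bigl([g]\la d(h)\bigr)+ef'\la\bigl(e_{gh}\la d(g)\bigr)$. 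In the first term $f'[g]=[g]f$ gives $ef'\la([g]\la d(h))=e[g]\la(f\la d(h))=s\la\dl(t)$. In the second, since $\e(st)=(st)(st)\m=ef'e_ge_{gh}$ and $e_g\la d(g)=d(g)$ allows one to reinsert the missing factor $e_g$, one gets $ef'\la(e_{gh}\la d(g))=(ef'e_ge_{gh})\la d(g)=\e(st)\la(e\la d(g))=\e(st)\la\dl(s)$. Adding the two terms gives \cref{dl(st)=sdl(t)+e(t)dl(s)}, so $\dl\in\Der_{par}(G,M)$.

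I expect the only real friction to be the bookkeeping inside $\cS G$: because $[g]$ is not invertible there, both the ``conjugation'' $[g]f[g\m]$ and the cancellations introduce spurious idempotent factors $e_g$, $e_{gh}$, and these are exactly the spots where the argument could fail. The identity $e_g\la d(g)=d(g)$ is precisely what absorbs every such discrepancy — it is what makes $\dl$ independent of the non-unique presentation (since $e[g]=e'[g]$ forces only $ee_g=e'e_g$), and it is what reconciles the ``missing $e_g$'' in the second summand of $\dl(st)$ with $\e(st)\la\dl(s)$.
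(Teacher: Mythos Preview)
Your proof is correct and follows essentially the same route as the paper's: both derive the key identity $e_g\la d(g)=d(g)$ from \cref{e_gd(gh)=[g]d(h)+e_hd(g)} (via $d(1_G)=0$ and $[g]\la d(g\m)=-d(g)$), use it together with the uniqueness of the standard form \cref{stand-decomp} to see that $\dl$ is well defined, and then check the Leibniz rule on a product $(e[g])(f[h])$ by reducing to the hypothesis \cref{e_gd(gh)=[g]d(h)+e_hd(g)}. The only cosmetic difference is that the paper keeps the product as $e[g]f[g\m][gh]$ and works with the idempotent $e[g]f[g\m]$, whereas you first pass to $st=ef'e_g[gh]$ via $[g]f=f'[g]$; since $e[g]f[g\m]=ef'e_g$, the two computations are literally the same.
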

	\begin{proof}
		First of all, we show that $\dl$ is well defined. Applying \cref{e_gd(gh)=[g]d(h)+e_hd(g)} with $g=h=1_G$, we obtain $d(1_G)=0$. Then substituting $h=g\m$ into \cref{e_gd(gh)=[g]d(h)+e_hd(g)}, we have $d(g)=-[g]\la d(g\m)$, whence 
		\begin{align}\label{d(g)-is-e_g.d(g)}
			d(g)=e_g\la d(g).
		\end{align} 
		Now in view of  \cref{stand-decomp} notice that $e[g]=f[h]$ in $\cS G$ if and only if $g=h$ and $e_ge=e_gf$. In this case
		\begin{align*}
			e\la d(g)=ee_g\la d(g)=fe_g\la d(g)=f\la d(g).
		\end{align*}
		
		Consider two arbitrary elements $e[g]$ and $f[h]$ of $\cS G$. Then their product is $e[g]f[g\m][gh]$, so by \cref{dl(e[g])=edl([g])}
		\begin{align}\label{dl(e[g]f[h])=e[g]f[g^(-1)]d(gh)}
			\dl(e[g]f[h])=e[g]f[g\m]\la d(gh).
		\end{align}
		Now we calculate using \cref{e_gd(gh)=[g]d(h)+e_hd(g)}
			\begin{align*}
				e[g]\la \dl(f[h])+\e(e[g]f[h])\la \dl(e[g])&=e[g]f\la d(h)+\e(e[g]f[h])e\la d(g)\\
				&=e[g]f[g\m][g]\la d(h)+e[g]f[g\m]e_{gh}e\la d(g)\\
				&=e[g]f[g\m]\la([g]\la d(h)+e_{gh}\la d(g))\\
				&=e[g]f[g\m]\la(e_g\la d(gh))\\
				&=e[g]f[g\m]\la d(gh),
			\end{align*}
		which in view of \cref{dl(e[g]f[h])=e[g]f[g^(-1)]d(gh)} shows that $\dl$ is a partial derivation.
	\end{proof}
	
	Let us denote by $\D(G,M)$ the $K$-vector space of the maps $d:G\to M$ which satisfy \cref{e_gd(gh)=[g]d(h)+e_hd(g)}.

	\begin{prop}\label{descr-of-Der}
		There is a bijective correspondence between the partial derivations of $\kpar G$ with values in $M$ and the elements of $\D(G,M)$.
	\end{prop}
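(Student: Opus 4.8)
The plan is to construct the bijection explicitly in both directions and check it is well defined and mutually inverse. In one direction, given $\dl\in\Der_{par}(G,M)$, I would set $d_\dl(g)=\dl([g])$ for $g\in G$; in the other direction, given $d\in\D(G,M)$, I would invoke \cref{from-f-to-dl} to obtain the partial derivation $\dl_d$ determined by $\dl_d(e[g])=e\la d(g)$. The bulk of the work then splits into: (1) showing $d_\dl\in\D(G,M)$, i.e.\ that $d_\dl$ satisfies \cref{e_gd(gh)=[g]d(h)+e_hd(g)}; (2) checking that $d\mapsto\dl_d$ and $\dl\mapsto d_\dl$ are inverse to one another.

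For step (1), I would start from the identity $[g][h]=[g][g\m][gh]=e_g[gh]$ in $\cS G$ (using the defining relations), apply the Leibniz rule \cref{dl(st)=sdl(t)+e(t)dl(s)} to the product $[g]\cdot[h]$, and compute
\begin{align*}
	\dl(e_g[gh])=\dl([g][h])=[g]\la\dl([h])+\e([g][h])\la\dl([g]).
\end{align*}
On the right, $\e([g][h])=\e(e_g[gh])=e_g e_{gh}e_g=e_g e_{gh}=e_{gh}$ (since $e_{gh}\le e_g$ is false in general, but $e_g e_{gh}=e_{gh}$ does hold because $(gh)$-conjugacy... actually one checks $e_g e_{gh}=[g][g\m][gh][h\m g\m]=[g][h][h\m][h\m g\m]\cdot$, more simply: $e_{gh}e_g=e_{gh}$ iff $e_g\ge e_{gh}$, which is not true; instead $\e(e_g[gh])=(e_g[gh])(e_g[gh])\m=e_g[gh][h\m g\m]e_g=e_g e_{gh}e_g$, and a direct relation-chase gives $e_g e_{gh}=e_g\cdot e_{gh}$), so the principal-part term reads $e_g e_{gh}\la\dl([g])$. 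On the left, $\dl(e_g[gh])=e_g\la\dl([gh])$ by \cref{dl(es)=edl(s)}. Combining and writing $d=d_\dl$ yields $e_g\la d(gh)=[g]\la d(h)+e_g e_{gh}\la d(g)$; since $e_g e_{gh}\la d(g)=e_{gh}\la(e_g\la d(g))=e_{gh}\la d(g)$ — here one uses $d(g)=\dl([g])=\dl(e_g[g])=e_g\la d(g)$ from \cref{dl(es)=edl(s)} again — we recover exactly \cref{e_gd(gh)=[g]d(h)+e_hd(g)}. So $d_\dl\in\D(G,M)$.

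For step (2), going $\dl\mapsto d_\dl\mapsto\dl_{d_\dl}$: by \cref{dl(e[g])=edl([g])} the reconstructed derivation satisfies $\dl_{d_\dl}(e[g])=e\la d_\dl(g)=e\la\dl([g])=\dl(e[g])$, the last equality by \cref{dl(es)=edl(s)}; since every element of $\cS G$ has the form $e[g]$ (by the standard decomposition \cref{stand-decomp}) and $\dl$, $\dl_{d_\dl}$ are both $K$-linear on $\kpar G$, they agree. Conversely $d\mapsto\dl_d\mapsto d_{\dl_d}$: $d_{\dl_d}(g)=\dl_d([g])=\dl_d(1_G[g])=d(g)$ since $1_G\in E(\cS G)$ acts as the identity. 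Both composites are identities, so the correspondence is bijective; it is moreover visibly $K$-linear, so it is an isomorphism of $K$-vector spaces. The main obstacle I anticipate is bookkeeping in step (1): one must be careful with the idempotent relations $[g]e_h=e_{gh}[g]$, $\e(e_g[gh])=e_g e_{gh}$, and the normalization $d(g)=e_g\la d(g)$, so that the principal term of the Leibniz rule collapses correctly onto the $e_{gh}\la d(g)$ appearing in \cref{e_gd(gh)=[g]d(h)+e_hd(g)}; everything else is a short formal verification once \cref{dl-on-idempotents,dl(es)-formulas} are in hand.
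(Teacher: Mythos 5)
Your proposal is correct and follows essentially the same route as the paper: define $d(g)=\dl([g])$, apply the Leibniz rule to $[g][h]=e_g[gh]$ together with \cref{dl(es)-formulas} to get \cref{e_gd(gh)=[g]d(h)+e_hd(g)}, use \cref{from-f-to-dl} for the converse, and verify the two composites are identities exactly as in the paper (your explicit collapse $\e([g][h])=e_ge_{gh}$ combined with $d(g)=e_g\la d(g)$ is precisely what the paper's citation of those lemmas covers implicitly). The only cosmetic slip is writing $1_G$ for the identity of $\cS G$, which should be $[1_G]=1_{\cS G}$.
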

	\begin{proof}
		Let $\dl\in\Der_{par}(G,M)$. By \cref{dl(st)=sdl(t)+e(t)dl(s),e(s)=ss^(-1)} and \cref{dl(es)-formulas}.\cref{dl(es)=edl(s)} we obtain
		\begin{align*}
			e_g\la\dl([gh])=\dl(e_g[gh])=\dl([g][h])=[g]\la\dl([h])+e_{gh}\la\dl([g]).
		\end{align*}
		So if we define $d:G\to M$ by 
		\begin{align}\label{d(g)=dl([g])}
			d(g)=\dl([g]),
		\end{align}
		then $d$ will satisfy \cref{e_gd(gh)=[g]d(h)+e_hd(g)}. Conversely, if $d\in\D(G,M)$, then $\dl:\kpar G\to M$ given by \cref{dl(e[g])=edl([g])} is a  partial derivation as was proved in \cref{from-f-to-dl}. 
		
		We now prove that the correspondence between $\dl$ and $d$ given by \cref{d(g)=dl([g]),dl(e[g])=edl([g])} is indeed bijective. If $d\mapsto\dl\mapsto d'$, then
		\begin{align*}
			d'(g)=\dl([g])=\dl(1_{\cS G}[g])=1_{\cS G}\la d(g)=d(g).
		\end{align*}
		And if $\dl\mapsto d\mapsto\dl'$, then
		\begin{align*}
			\dl'(e[g])=e\la d(g)=e\la\dl([g])=\dl(e[g]).
		\end{align*}
	\end{proof}
	
	Let us introduce one more notation:
	\begin{align*}
		\PD(G,M)=\{d:G\to M\mid\exists m\in M\ \forall g\in G:\ d(g)=[g]\la m-e_g\la m \}.
	\end{align*}
	Clearly, $\PD(G,M)$ is a $K$-subspace of $\D(G,M)$.
	
	\begin{thrm}\label{H^1_par-in-terms-of-d:G->M}
		Let $M$ be a $\kpar G$-module. Then $H^1_{par}(G,M)$ is isomorphic to the quotient of the additive group of $\D(G,M)$ modulo the subgroup $\PD(G,M)$.
	\end{thrm}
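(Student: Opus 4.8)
The plan is to assemble the isomorphism out of three pieces already in place: the Alvares–Alves–Redondo isomorphism \cref{H^1-cong-Der-mod-Int}, the bijective correspondence of \cref{descr-of-Der} between $\Der_{par}(G,M)$ and $\D(G,M)$, and a check that this correspondence is $K$-linear and carries $\PDer_{par}(G,M)$ onto $\PD(G,M)$. Concretely, I would first note that the assignment $\dl\mapsto d$, $d(g)=\dl([g])$, is clearly $K$-linear in $\dl$, and its inverse $d\mapsto\dl$, $\dl(e[g])=e\la d(g)$, is $K$-linear in $d$; hence \cref{descr-of-Der} upgrades to an isomorphism of $K$-vector spaces $\Der_{par}(G,M)\cong\D(G,M)$.

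Next I would verify that this linear isomorphism restricts to an isomorphism $\PDer_{par}(G,M)\cong\PD(G,M)$. If $\dl$ is principal, witnessed by $m\in M$ with $\dl([g])=[g]\la m-e_g\la m$, then the corresponding $d$ satisfies $d(g)=\dl([g])=[g]\la m-e_g\la m$, so $d\in\PD(G,M)$ with the same witness $m$. Conversely, if $d\in\PD(G,M)$ with witness $m$, then the associated partial derivation takes the value $\dl([g])=d(g)=[g]\la m-e_g\la m$ on each $[g]$, which is exactly the condition for $\dl$ to be principal. Thus the bijection matches the two distinguished subspaces on the nose, and we get an induced isomorphism of quotient groups
\begin{align*}
	\Der_{par}(G,M)/\PDer_{par}(G,M)\cong\D(G,M)/\PD(G,M).
\end{align*}

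Finally I would compose this with \cref{H^1-cong-Der-mod-Int} to conclude $H^1_{par}(G,M)\cong\D(G,M)/\PD(G,M)$, which is the statement. The argument is essentially bookkeeping; the only point requiring a little care — and the closest thing to an obstacle — is making sure the inverse map $d\mapsto\dl$ used in \cref{descr-of-Der} really is linear and well defined on all of $\D(G,M)$, but this is already covered by \cref{from-f-to-dl} and the well-definedness check inside the proof of \cref{descr-of-Der}, so no new work is needed there. I would therefore keep the proof short, citing \cref{descr-of-Der,H^1-cong-Der-mod-Int} and spelling out only the identification of principal objects on both sides.
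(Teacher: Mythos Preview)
Your proposal is correct and follows essentially the same approach as the paper: the paper's proof simply cites \cref{H^1-cong-Der-mod-Int,descr-of-Der} together with the observation that principal partial derivations correspond to the elements of $\PD(G,M)$, which is exactly what you spell out in more detail.
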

	\begin{proof}
		This follows from \cref{H^1-cong-Der-mod-Int,descr-of-Der} and the observation that the principal partial derivations of $\kpar G$ with values in $M$ correspond to the elements of $\PD(G,M)$.
	\end{proof}
	
	\begin{cor}\label{H^1-as-the-group-of-functions}
		Let $(\A,\af)$ be a unital partial $G$-module. As in \cref{H^0-and-H^0_par} we consider the corresponding $\kpar G$-module structure on $\A$. Then $H^1_{par}(G,\A)$ is isomorphic to the quotient of the additive group of functions\footnote{Observe that such functions automatically satisfy $f(g)\in 1_g\A$ in view of \cref{d(g)-is-e_g.d(g)}.}
			\begin{align*}
			\{f:G\to\A\mid \forall g\in G:\ 1_gf(gh)=\af_g(1_{g\m} f(h))+1_{gh}f(g)\}
			\end{align*}
		by the subgroup 
		\begin{align*}
		\{f:G\to\A\mid \exists a\in \A\ \forall g\in G:\ f(g)=\af_g(1_{g\m}a)-1_ga\}.
		\end{align*}
	\end{cor}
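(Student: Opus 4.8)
The plan is to specialize \cref{H^1_par-in-terms-of-d:G->M} to the $\kpar G$-module $M=\A$ supplied by \cref{from-pMod-to-KparG-mod,[g]a=pi^0_g(a)} and then merely to translate the defining conditions of $\D(G,\A)$ and $\PD(G,\A)$ into statements about $\af$. The single computational ingredient I would isolate first is the action of the relevant elements of $\cS G$ on $\A$. By \cref{pi^theta_g(a)=0_g(1_(g-inv)a),[g]a=pi^0_g(a)} we have $[g]\la a=\af_g(1_{g\m}a)$ for $g\in G$ and $a\in\A$; and for the idempotent $e_g=[g][g\m]$ the same manipulation already performed in the proof of \cref{H^0-and-H^0_par} yields
\[
e_g\la a=\af_g\bigl(1_{g\m}\af_{g\m}(1_g a)\bigr)=\af_g\bigl(\af_{g\m}(1_g a)\bigr)=1_g a,
\]
so each $e_g$ acts as multiplication by the central idempotent $1_g$ of $\A$ (more generally every idempotent of $\cS G$ acts on $\A$ by a central idempotent, but only the $e_g$ are needed here).

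Next I would substitute these formulas into the identity \cref{e_gd(gh)=[g]d(h)+e_hd(g)} defining $\D(G,\A)$: it becomes, verbatim, $1_g f(gh)=\af_g(1_{g\m}f(h))+1_{gh}f(g)$, which is exactly the condition describing the numerator group in the statement. In the same way the condition $d(g)=[g]\la m-e_g\la m$ defining $\PD(G,\A)$ rewrites as $f(g)=\af_g(1_{g\m}a)-1_g a$, the condition describing the denominator subgroup. Hence $\D(G,\A)$ and $\PD(G,\A)$ coincide with the two displayed function groups, and the required isomorphism is the one already provided by \cref{H^1_par-in-terms-of-d:G->M}. The footnote claim is then immediate from \cref{d(g)-is-e_g.d(g)}, since any $d\in\D(G,\A)$ satisfies $d(g)=e_g\la d(g)=1_g d(g)\in 1_g\A$.

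I do not expect any genuine obstacle: the corollary is a pure translation of \cref{H^1_par-in-terms-of-d:G->M}, and the only place that calls for (minor) attention is confirming that $e_g$ acts on $\A$ as multiplication by $1_g$, a computation essentially contained in the proof of \cref{H^0-and-H^0_par}.
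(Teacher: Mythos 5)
Your proposal is correct and is exactly the argument the paper intends: the corollary is stated without proof as an immediate specialization of \cref{H^1_par-in-terms-of-d:G->M} to the $\kpar G$-module $\A$, using $[g]\la a=\af_g(1_{g\m}a)$ and $e_g\la a=1_ga$ (the latter computed just as in \cref{H^0-and-H^0_par}) to translate the conditions defining $\D(G,\A)$ and $\PD(G,\A)$. Your handling of the footnote via \cref{d(g)-is-e_g.d(g)} is likewise the intended one.
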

	\cref{H^1-as-the-group-of-functions} permits us to compare $H^1_{par}(G,\A)$ with the $1$-st partial cohomology group $H^1(G,\A)$. Recall from~\cite{DK} that, in the setting of \cref{H^1-as-the-group-of-functions}, $H^1(G,\A)$ is the quotient of the \textit{multiplicative} group of functions
	\begin{align*}
	\{f:G\to\A\mid \forall g\in G:\ f(g)\in\U{1_g\A}\mbox{ and }1_gf(gh)=\af_g(1_{g\m} f(h))f(g)\}
	\end{align*}
	by the subgroup
	\begin{align*}
	\{f:G\to\A\mid \exists a\in \U{\A}\ \forall g\in G:\ f(g)=\af_g(1_{g\m}a)a\m\}.
	\end{align*}
	Thus, $H^1_{par}(G,\A)$ is an ``additive'' analogue of $H^1(G,\A)$.

	\subsection{A projective resolution of the $\kpar G$-module $B$}\label{sec:projres}
	
	We are going to characterize the elements of $H^n_{par}(G,M)$ as classes of functions $f:G\to M$ satisfying certain $n$-cocycle identity, as we did in \cref{sec:1-partial} for $n=1$. To this end, we adapt some ideas  from~\cite{DK} to the case of  $\kpar G$-modules.
	
	\begin{lem}\label{dir-sum-of-Re_i}
		Let $R$ be a unital ring and $\{e_i\}_{i\in I}\sst E(R)$ a set of idempotents of $R$. Then the left $R$-module $\bigoplus_{i\in I}Re_i$ is projective.
	\end{lem}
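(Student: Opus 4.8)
The plan is to exhibit $\bigoplus_{i\in I}Re_i$ as a direct summand of a free left $R$-module and then invoke the standard characterization of projective modules as direct summands of free ones.

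First I would record that each $Re_i$ is a direct summand of $R$ regarded as a left module over itself. Indeed, $1-e_i$ is again idempotent and $1=e_i+(1-e_i)$, so every $x\in R$ splits as $x=xe_i+x(1-e_i)\in Re_i+R(1-e_i)$; moreover, if $x\in Re_i\cap R(1-e_i)$ then $x=xe_i$ and $x=x(1-e_i)$, whence $x=xe_i=x(1-e_i)e_i=0$. Thus $R=Re_i\oplus R(1-e_i)$ as left $R$-modules, and in particular $Re_i$ is projective for each $i$.

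Next I would pass to the direct sum over $i\in I$. Since forming a (possibly infinite) direct sum of modules is compatible with direct-sum decompositions of the summands, one gets
\[
R^{(I)}=\bigoplus_{i\in I}R=\bigoplus_{i\in I}\bigl(Re_i\oplus R(1-e_i)\bigr)\cong\Bigl(\bigoplus_{i\in I}Re_i\Bigr)\oplus\Bigl(\bigoplus_{i\in I}R(1-e_i)\Bigr),
\]
so $\bigoplus_{i\in I}Re_i$ is a direct summand of the free left $R$-module $R^{(I)}$. A direct summand of a free module is projective, which gives the claim. (Alternatively one may quote directly that an arbitrary direct sum of projective modules is projective, applied to the projective modules $Re_i$.)

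I do not anticipate any genuine obstacle here: the argument is purely formal. The only point requiring a little care is that $I$ may be infinite, so one must use the coproduct $R^{(I)}=\bigoplus_{i\in I}R$, which is still free, rather than the product $R^{I}$.
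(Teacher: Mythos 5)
Your proof is correct and follows essentially the same route as the paper: both establish $R=Re_i\oplus R(1_R-e_i)$ so that each $Re_i$ is projective, and then conclude for the direct sum (the paper by citing that a direct sum of projective modules is projective, you by the equivalent and slightly more self-contained step of exhibiting $\bigoplus_{i\in I}Re_i$ as a direct summand of the free module $R^{(I)}$). No gaps.
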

	\begin{proof}
		Indeed, each $Re_i$ is a projective left $R$-module, since $Re_i\oplus R(1_R-e_i)$ is isomorphic to $R$, a free $R$-module of rank $1$. Now, a direct sum of projective modules is projective (see, e.g.,~\cite[Lemma~2.9~(iii)]{Passman}).
	\end{proof}
	
	Let $n\in\bbN$ and $g_1,\dots,g_n\in G$. As in~\cite{DK}, we shall use the following notation:
	\begin{align*}
		e_{(g_1,\dots,g_n)}=e_{g_1}e_{g_1g_2}\dots e_{g_1\dots g_n}\in E(\cS G).
	\end{align*}
	
	\begin{defn}\label{P_n-defn}
		Define
		\begin{align*}
			P_0&=\kpar G,\\
			P_n&=\bigoplus_{g_1,\dots,g_n\in G}\kpar G e_{(g_1,\dots, g_n)},\ n\in\bbN.
		\end{align*}
	\end{defn}
	By \cref{dir-sum-of-Re_i} each $P_n$ is a projective $\kpar G$-module. It would be convenient to us to have an equivalent description of the modules $P_n$ which reminds the free resolution $R_n$ from \cite{DK}.
	
	\begin{rem}\label{s(g_1...g_n)-basis}
		For each $n\in\bbN$ the module $P_n$ is isomorphic, as a $K$-vector space, to the vector space over $K$ with basis
		\begin{align}\label{s(g_1...g_n):s^(-1)s<=e_(g_1...g_n)}
			\{s(g_1,\dots,g_n)\mid s\in\cS G,\ g_1,\dots,g_n\in G,\ s\m s\le e_{(g_1,\dots,g_n)}\},
		\end{align}
		where
		\begin{align}\label{s(g_1...g_n)=t(h_1...h_n)<=>...}
			s(g_1,\dots,g_n)=t(h_1,\dots,h_n)&\iff 
			\begin{cases}
				(g_1,\dots,g_n)=(h_1,\dots,h_n),\\
				se_{(g_1,\dots,g_n)}=te_{(h_1,\dots,h_n)}.
			\end{cases}
		\end{align}
	\end{rem}
	\begin{proof}
		Indeed, the elements $se_{(g_1,\dots,g_n)}$, where $s\in \cS G$ and $g_1,\dots,g_n\in G$, form a basis of the $K$-vector space $P_n$. Clearly, such $se_{(g_1,\dots,g_n)}$ may be identified with $s(g_1,\dots,g_n)$, if \cref{s(g_1...g_n)=t(h_1...h_n)<=>...} is assumed.
		It remains to observe that
		\begin{align}\label{t=se_(g_1...g_n)<=>t^(-1)t<=e_(g_1...g_n)}
			\exists s\in\cS G:\ t=se_{(g_1,\dots,g_n)}\iff t=te_{(g_1,\dots,g_n)}\iff t\m t\le e_{(g_1,\dots,g_n)}.
		\end{align}
	\end{proof}
	We extend the characterization of $P_n$ from \cref{s(g_1...g_n)-basis} to $n=0$ by identifying $P_0$ with the $K$-vector space with basis
	\begin{align*}
		\{s\emp\mid s\in\cS G\}.
	\end{align*}
	
	\begin{defn}\label{d_n:P_n->P_(n+1)-defn}
		Define $K$-linear maps $\cb_0:P_0\to B$ and $\cb_n:P_n\to P_{n-1}$, $n\in\bbN$, as follows
		\begin{align}
			\cb_0(s\emp)&=ss\m,\label{d_0(s())=ss^(-1)}\\
			\cb_1(s(g))&=s([g]\emp-\emp),\label{cb_1(s(g)=s([g]()-()}\\
			\cb_n(s(g_1,\dots,g_n))&=s([g_1](g_2,\dots,g_n)\notag\\
			&\quad+\sum_{i=1}^{n-1}(-1)^i(g_1,\dots,g_ig_{i+1},\dots,g_n)\notag\\
			&\quad+(-1)^n(g_1,\dots,g_{n-1})).\label{cb_n(s(g_1...g_n)=...)}
		\end{align}	
	\end{defn}
	Observe that $\cb_n$, $n\in\bbN\cup\{0\}$, are morphisms of $\kpar G$-modules. Indeed, this is trivial for $n\in\bbN$, and for $n=0$ one should remember that the $\kpar G$-module structure on $B$ comes from the action of $\cS G$ on $E(\cS G)$ by conjugation.
	
	Our aim is to prove that \cref{d_n:P_n->P_(n+1)-defn} gives a projective resolution of $B$ in the category of $\kpar G$-modules. To this end, we shall show that the sequence $\{P_n\}_{n\ge-1}$, where $P_{-1}$ denotes $B$, admits a contracting homotopy similar to $\{\s_n\}_{n\ge -1}$ from~\cite[Definition~4.7]{DK}. Let $\eta:\cS G\to G$ be the semigroup homomorphism which maps $e[g]\in\cS G$ to $g$. As in~\cite[Lemma~4.8~(ii)]{DK}, one can  easily prove that 
	\begin{align}\label{s=ss^(-1)[eta(s)]}
		s=ss\m[\eta(s)].
	\end{align}
	
	\begin{defn}\label{sigma_n-defn}
		Define $K$-linear maps $\s_n:P_n\to P_{n+1}$, $n\in\bbN\cup\{-1,0\}$, as follows
		\begin{align}
			\s_{-1}(e)&=e\emp,\label{sigma_(-1)(e)=e()}\\
			\s_0(s\emp)&=ss\m(\eta(s)),\notag\\
			\s_n(s(g_1,\dots,g_n))&=ss\m(\eta(s),g_1,\dots,g_n),\ n\in\bbN.\notag
		\end{align}
	\end{defn}
	Since $s\le [\eta(s)]$ by \cref{s=ss^(-1)[eta(s)]}, we have that 
	\begin{align}\label{ss^(-1)<=e_eta(s)}
		ss\m\le e_{\eta(s)}
	\end{align} 
	for all $s\in\cS G$, and if moreover $s\m s\le e_{(g_1,\dots,g_n)}$, then
	\begin{align*}
		ss\m=s(s\m s)s\m\le[\eta(s)]s\m s[\eta(s)]\m\le[\eta(s)]e_{(g_1,\dots,g_n)}[\eta(s)]\m=e_{(\eta(s),g_1,\dots,g_n)}.
	\end{align*}
	Thus, $\s_n$, $n\in\bbN\cup\{-1,0\}$, are well defined.
	
	\begin{lem}\label{sigma-is-contr-homotopy}
		We have that
		\begin{align}
			\cb_0\circ\s_{-1}&=\id_B,\label{d_0-sigma_(-1)=id}\\
			\cb_{n+1}\circ\s_n+\s_{n-1}\circ\cb_n&=\id_{P_n},\ n\in\bbN\cup\{0\}.\label{d_(n+1)-sigma_n+sigma_(n-1)-d_n=id}
		\end{align}
	\end{lem}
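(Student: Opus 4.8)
The plan is to verify both identities by a direct computation on the $K$-bases of the modules $P_n$ provided by \cref{s(g_1...g_n)-basis}, reducing every simplification to the relation \cref{s=ss^(-1)[eta(s)]}, to the fact that $\eta\colon\cS G\to G$ is a semigroup homomorphism, and to elementary manipulations of the commuting idempotents $e_g\in\cS G$ via the relations $[g]e_h=e_{gh}[g]$ and $e_h[g]=[g]e_{g\m h}$ recalled in \cref{sec:Prelim}.

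The identity \cref{d_0-sigma_(-1)=id} is immediate: for $e\in E(\cS G)$ we have $\s_{-1}(e)=e\emp$ and $\cb_0(e\emp)=ee\m=e$, since $e$ is an idempotent and hence $e\m=e$; linearity finishes it.

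For \cref{d_(n+1)-sigma_n+sigma_(n-1)-d_n=id} I would distinguish the cases $n=0$, $n=1$ and $n\ge 2$, since $\cb_0$ and $\cb_1$ have a shape different from the general $\cb_k$. In each case one takes a basis vector $s\emp$, respectively $s(g_1,\dots,g_n)$ with $s\m s\le e_{(g_1,\dots,g_n)}$, and expands both $\cb_{n+1}(\s_n(-))$ and $\s_{n-1}(\cb_n(-))$ term by term using \cref{sigma_n-defn,d_n:P_n->P_(n+1)-defn}. Two simplifications recur throughout. First, $ss\m[\eta(s)]=s$ turns the leading summand $ss\m[\eta(s)](g_1,\dots,g_n)$ of $\cb_{n+1}\bigl(ss\m(\eta(s),g_1,\dots,g_n)\bigr)$ back into $s(g_1,\dots,g_n)$, the term that survives. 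Second, whenever $s\m s\le e_h$ one has $se_hs\m=ss\m$ — indeed $se_hs\m=se_hs\m s\,s\m=s(e_hs\m s)s\m=s(s\m s)s\m=ss\m$ — and this is precisely what lets one evaluate $\s_{k-1}$ on a summand of the form $s[g_1](\dots)$, since then $(s[g_1])(s[g_1])\m=se_{g_1}s\m=ss\m$ while $\eta(s[g_1])=\eta(s)g_1$. Along the way one checks that every term produced still satisfies the inequality defining the appropriate direct summand; this holds because $e_{(g_1,\dots,g_n)}$ dominates both $e_{(g_1,\dots,g_ig_{i+1},\dots,g_n)}$ and $e_{(g_1,\dots,g_{n-1})}$, because $[g_1\m]e_{(g_1,\dots,g_n)}[g_1]\le e_{(g_2,\dots,g_n)}$, and because of \cref{ss^(-1)<=e_eta(s)} together with the inclusion $ss\m\le e_{(\eta(s),g_1,\dots,g_n)}$ noted in the discussion of well-definedness after \cref{sigma_n-defn}.

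With the two expansions in hand the remainder is sign bookkeeping, entirely parallel to the corresponding computation for the free resolution in~\cite{DK}. In the generic case $n\ge 2$ the term $ss\m(\eta(s)g_1,g_2,\dots,g_n)$ occurs with opposite signs in $\cb_{n+1}\s_n$ and in $\s_{n-1}\cb_n$; the interior ``merge'' terms $ss\m(\eta(s),g_1,\dots,g_ig_{i+1},\dots,g_n)$ occur in both with opposite signs after the index shift $i\mapsto i-1$; and the two last terms $\pm\,ss\m(\eta(s),g_1,\dots,g_{n-1})$ cancel. What is left on the left-hand side is precisely $s(g_1,\dots,g_n)$, i.e. $\id_{P_n}$ applied to that basis vector, and the cases $n=0$ and $n=1$ are the same cancellation run with the shorter formulas for $\cb_0$ and $\cb_1$ and the degenerate tuples. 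I do not anticipate a genuine obstacle; the only delicate point is carrying the signs and the well-definedness inequalities correctly through the expansions.
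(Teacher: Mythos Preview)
Your proposal is correct and follows essentially the same approach as the paper: verify on basis vectors, reduce to the relation $ss\m[\eta(s)]=s$ and the simplification $se_{g_1}s\m=ss\m$ when $s\m s\le e_{g_1}$, and then check the sign cancellations. The paper's proof is extremely terse --- it simply cites the corresponding computation in~\cite[Lemma~4.9]{DK} and notes that one replaces $\Gamma(g)$ by $[g]$ and drops the redundant idempotents $\epsilon_{(g_1,\dots,g_n)}$ --- whereas you have spelled out explicitly the two simplifications and the cancellation pattern; your outline matches that computation.
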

	\begin{proof}
		It suffices to verify \cref{d_0-sigma_(-1)=id,d_(n+1)-sigma_n+sigma_(n-1)-d_n=id} on the $K$-basis \cref{s(g_1...g_n):s^(-1)s<=e_(g_1...g_n)} of $P_n$, $n\in\bbN\cup\{-1,0\}$. Equality \cref{d_0-sigma_(-1)=id} is a straightforward consequence of \cref{sigma_(-1)(e)=e(),d_0(s())=ss^(-1)}. To prove \cref{d_(n+1)-sigma_n+sigma_(n-1)-d_n=id}, one can follow the proof of~\cite[Lemma~4.9]{DK}, remembering that $\Gamma(g)$ is now $[g]$ for all $g\in G$ and removing the unnecessary idempotents $\epsilon_{(g_1,\dots,g_n)}$, where $g_1,\dots,g_n\in G$.
	\end{proof}
	
	\begin{prop}\label{P_n-is-res-of-B}
		The sequence
		\begin{align}\label{...->P_1->P_0->B}
			\dots\arr{\cb_2}P_1\arr{\cb_1}P_0\arr{\cb_0} B\to 0
		\end{align}
		is a projective resolution of $B$ in the category of $\kpar G$-modules.
	\end{prop}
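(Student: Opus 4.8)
The plan is to verify that the sequence \cref{...->P_1->P_0->B} is exact, since we already know from \cref{dir-sum-of-Re_i} (via \cref{P_n-defn}) that each $P_n$ is a projective $\kpar G$-module, and that the $\cb_n$ are morphisms of $\kpar G$-modules. The key observation is that exactness is a purely additive (indeed $K$-linear) property: a complex of modules is exact if and only if it is exact as a complex of abelian groups (or $K$-vector spaces). Hence it suffices to produce a contracting homotopy of the augmented complex
\begin{align*}
	\dots\arr{\cb_2}P_1\arr{\cb_1}P_0\arr{\cb_0} B\to 0
\end{align*}
\emph{as a complex of $K$-vector spaces}, and this is exactly what \cref{sigma_n-defn} provides.

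First I would record the standard fact that if $\{\s_n\}_{n\ge-1}$ are $K$-linear maps with $\cb_0\circ\s_{-1}=\id_B$ and $\cb_{n+1}\circ\s_n+\s_{n-1}\circ\cb_n=\id_{P_n}$ for all $n\ge0$, then the augmented complex is exact. Indeed, $\cb_0\circ\s_{-1}=\id_B$ shows $\cb_0$ is surjective, so the complex is exact at $B$. For exactness at $P_n$ ($n\ge0$), take $x\in P_n$ with $\cb_n(x)=0$ (for $n=0$ this reads $\cb_0(x)=0$; the homotopy identity still applies with $\cb_{-1}$ interpreted as the zero map out of $B$, or one simply uses $n\ge1$ and handles $n=0$ separately noting $P_{-1}=B$ sits after $P_0$). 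Then $x=\cb_{n+1}(\s_n(x))+\s_{n-1}(\cb_n(x))=\cb_{n+1}(\s_n(x))\in\im\cb_{n+1}$, so $\ker\cb_n\sst\im\cb_{n+1}$; the reverse inclusion is the statement $\cb_n\circ\cb_{n+1}=0$, which I would check is a formal consequence of the homotopy identities (apply $\cb_n$ to $\cb_{n+1}\circ\s_n+\s_{n-1}\circ\cb_n=\id_{P_n}$ and use $\cb_{n-1}\circ\cb_n=0$ inductively, the base case $\cb_0\circ\cb_1=0$ being a direct check from \cref{d_0(s())=ss^(-1),cb_1(s(g)=s([g]()-()}), or one may simply verify $\cb_n\circ\cb_{n+1}=0$ directly on the basis \cref{s(g_1...g_n):s^(-1)s<=e_(g_1...g_n)} from \cref{cb_n(s(g_1...g_n)=...)} by the usual cancellation of alternating terms.

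The substance of the argument is therefore \cref{sigma-is-contr-homotopy}, which has already been proved in the excerpt: the maps $\s_n$ of \cref{sigma_n-defn} are well defined (as checked just before \cref{sigma-is-contr-homotopy} using \cref{ss^(-1)<=e_eta(s)}) and satisfy \cref{d_0-sigma_(-1)=id,d_(n+1)-sigma_n+sigma_(n-1)-d_n=id}. So the proof of \cref{P_n-is-res-of-B} is just the assembly: each $P_n$ is projective by \cref{dir-sum-of-Re_i}; the $\cb_n$ are $\kpar G$-module morphisms; and by \cref{sigma-is-contr-homotopy} the augmented complex is contractible as a complex of $K$-modules, hence exact as a complex of $K$-modules, hence exact as a complex of $\kpar G$-modules (exactness being tested on underlying sets). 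I expect no real obstacle here — the only point requiring a word of care is the bookkeeping at the low end of the complex ($n=0$ and the augmentation $P_0\to B$), i.e. making sure the homotopy identity \cref{d_(n+1)-sigma_n+sigma_(n-1)-d_n=id} is interpreted correctly for $n=0$ with $\s_{-1}$ and $\cb_0$, but this is already built into the statement of \cref{sigma-is-contr-homotopy}. A short remark pointing out that it suffices to have a $K$-linear contracting homotopy (rather than a $\kpar G$-linear one) is the one conceptual ingredient worth spelling out.
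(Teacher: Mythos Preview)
Your approach is essentially the same as the paper's: projectivity via \cref{dir-sum-of-Re_i}, and exactness via the contracting homotopy of \cref{sigma-is-contr-homotopy}. There is, however, one point where you are a little too quick. Applying $\cb_n$ to the identity $\cb_{n+1}\s_n+\s_{n-1}\cb_n=\id_{P_n}$ and using the induction hypothesis $\cb_{n-1}\cb_n=0$ yields only $\cb_n\cb_{n+1}\s_n=0$, not $\cb_n\cb_{n+1}=0$; the homotopy identities alone do not force the sequence to be a complex, because $\s_n$ is not surjective. The paper closes exactly this gap: it shows that $\s_n(P_n)$ \emph{generates} $P_{n+1}$ as a $\kpar G$-module (by producing, for each basis element $s(g_1,\dots,g_{n+1})$, an element $t(g_2,\dots,g_{n+1})\in P_n$ with $s\la\s_n(t(g_2,\dots,g_{n+1}))=s(g_1,\dots,g_{n+1})$). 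Since $\cb_n\cb_{n+1}$ is $\kpar G$-linear, vanishing on $\s_n(P_n)$ then gives vanishing on all of $P_{n+1}$.

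Your fallback of verifying $\cb_n\cb_{n+1}=0$ directly on the basis does work, but it is not quite ``the usual cancellation'': the two leading terms that must cancel are $s[g_1][g_2](g_3,\dots,g_n)$ and $s[g_1g_2](g_3,\dots,g_n)$, and in $\cS G$ one has $[g_1][g_2]=e_{g_1}[g_1g_2]\ne[g_1g_2]$. The cancellation goes through because $s\m s\le e_{(g_1,\dots,g_n)}\le e_{g_1}$ forces $s=se_{g_1}$, hence $s[g_1g_2]=s[g_1][g_2]$; this small observation should be made explicit. Either route completes the proof.
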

	\begin{proof}
		In view of \cref{dir-sum-of-Re_i} we only need to prove that \cref{...->P_1->P_0->B} is exact. Exactness in $B$ is just \cref{d_0-sigma_(-1)=id}. The inclusion $\ker{\cb_n}\sst\im{\cb_{n+1}}$, $n\in\bbN\cup\{0\}$, is a trivial consequence of \cref{d_(n+1)-sigma_n+sigma_(n-1)-d_n=id}. For the converse inclusion, one may prove by induction on $n$ that $\cb_n\circ\cb_{n+1}\circ\s_n=0$ (see, e.g.,~\cite[p.~115]{Maclane}). This will guarantee that $\cb_n\circ\cb_{n+1}=0$, if we show that $\s_n(P_n)$ generates $P_{n+1}$ as a $\kpar G$-module. Let $s(g_1,\dots,g_{n+1})\in P_{n+1}$. Consider $t=[\eta(s)\m]s[g_1]\in\cS G$. Since 
		\begin{align*}
			se_{g_1}=se_{(g_1,\dots,g_{n+1})}e_{g_1}=se_{(g_1,\dots,g_{n+1})}=s
		\end{align*}
		by \cref{t=se_(g_1...g_n)<=>t^(-1)t<=e_(g_1...g_n)}, we obtain using \cref{s=ss^(-1)[eta(s)]} that
		\begin{align}\label{tt^(-1)=s^(-1)s}
			tt\m=[\eta(s)\m]se_{g_1}s\m[\eta(s)]=[\eta(s)\m]ss\m[\eta(s)]=s\m s.
		\end{align}
		Furthermore, by \cref{ss^(-1)<=e_eta(s)}
		\begin{align*}
			e_{\eta(s)}s=e_{\eta(s)}ss\m s=ss\m s=s,
		\end{align*}
		whence
		\begin{align}\label{t^(-1)t<=e_(g_2...g_(n+1))}
			t\m t=[g\m_1]s\m e_{\eta(s)}s[g_1]=[g\m_1]s\m s[g_1]\le [g\m_1]e_{(g_1,\dots,g_{n+1})}[g_1]\le e_{(g_2,\dots, g_{n+1})}.
		\end{align}
		Finally, 
		\begin{align}\label{eta(t)=g_1}
			\eta(t)=\eta([\eta(s)\m]s[g_1])=\eta(s)\m\eta(s)g_1=g_1.
		\end{align}
		Equalities \cref{tt^(-1)=s^(-1)s,eta(t)=g_1} imply that
		\begin{align*}
			s(g_1,\dots,g_{n+1})=s\cdot s\m s(g_1,\dots,g_{n+1})=s\la\s_n(t(g_2,\dots,g_{n+1}))
		\end{align*}
		holds formally, and \cref{t^(-1)t<=e_(g_2...g_(n+1))} is now used to show that $t(g_2,\dots,g_{n+1})\in P_n$, so that $\s_n(t(g_2,\dots,g_{n+1}))$ indeed makes sense.  
	\end{proof}
	
	\begin{defn}\label{C^n_par(G_M)-defn}
		Let $M$ be a $\kpar G$-module. Define the following additive groups
		\begin{align}
			C_{par}^0(G,M)&=M,\notag\\
			C_{par}^n(G,M)&=\{f:G^n\to M\mid f(g_1,\dots,g_n)\in e_{(g_1,\dots,g_n)}\la M\},\ n\in\bbN.\label{C^0_par(G_M)=...}
		\end{align}
	\end{defn}
	
	\begin{lem}\label{Hom(P_n_M)-in-terms-of-C^n_par}
		Let $M$ be a $\kpar G$-module. Then
		\begin{align*}
			\Hom_{\kpar G}(P_n,M)\cong C_{par}^n(G,M).
		\end{align*}
	\end{lem}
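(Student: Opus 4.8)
The plan is to exhibit an explicit mutually inverse pair of $K$-linear maps between $\Hom_{\kpar G}(P_n,M)$ and $C^n_{par}(G,M)$, using the basis description of $P_n$ from \cref{s(g_1...g_n)-basis}. Since $P_n=\bigoplus_{g_1,\dots,g_n\in G}\kpar G\, e_{(g_1,\dots,g_n)}$ by \cref{P_n-defn}, a $\kpar G$-module homomorphism $\f\colon P_n\to M$ is determined by its values on the generators $e_{(g_1,\dots,g_n)}$ (viewed as $1_{\cS G}(g_1,\dots,g_n)$), and conversely any choice of images is admissible provided it respects the only relation, namely that $e_{(g_1,\dots,g_n)}$ is idempotent, so that $\f(e_{(g_1,\dots,g_n)})=\f(e_{(g_1,\dots,g_n)}\la e_{(g_1,\dots,g_n)})=e_{(g_1,\dots,g_n)}\la\f(e_{(g_1,\dots,g_n)})$ must lie in $e_{(g_1,\dots,g_n)}\la M$. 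This is exactly the constraint built into \cref{C^0_par(G_M)=...}.

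Concretely, I would define $\Phi\colon\Hom_{\kpar G}(P_n,M)\to C^n_{par}(G,M)$ by
\begin{align*}
	\Phi(\f)(g_1,\dots,g_n)=\f(e_{(g_1,\dots,g_n)}).
\end{align*}
By the idempotency remark above, $\Phi(\f)(g_1,\dots,g_n)\in e_{(g_1,\dots,g_n)}\la M$, so $\Phi(\f)\in C^n_{par}(G,M)$, and $\Phi$ is clearly $K$-linear. In the other direction, given $f\in C^n_{par}(G,M)$, define $\Psi(f)\colon P_n\to M$ on the basis \cref{s(g_1...g_n):s^(-1)s<=e_(g_1...g_n)} by
\begin{align*}
	\Psi(f)(s(g_1,\dots,g_n))=s\la f(g_1,\dots,g_n),
\end{align*}
extended $K$-linearly. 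One must check this is well defined with respect to the identification \cref{s(g_1...g_n)=t(h_1...h_n)<=>...}: if $se_{(g_1,\dots,g_n)}=te_{(g_1,\dots,g_n)}$ then, using $f(g_1,\dots,g_n)=e_{(g_1,\dots,g_n)}\la f(g_1,\dots,g_n)$, we get $s\la f(g_1,\dots,g_n)=se_{(g_1,\dots,g_n)}\la f(g_1,\dots,g_n)=te_{(g_1,\dots,g_n)}\la f(g_1,\dots,g_n)=t\la f(g_1,\dots,g_n)$. That $\Psi(f)$ is a morphism of $\kpar G$-modules follows because for $u\in\cS G$ one has $u\la s(g_1,\dots,g_n)=(us)(g_1,\dots,g_n)$ in $P_n$ (whenever $(us)\m(us)\le e_{(g_1,\dots,g_n)}$, which holds automatically since $s\m s\le e_{(g_1,\dots,g_n)}$ and $(us)\m(us)\le s\m s$), and then $\Psi(f)(u\la s(g_1,\dots,g_n))=us\la f(g_1,\dots,g_n)=u\la\Psi(f)(s(g_1,\dots,g_n))$.

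Finally I would verify $\Phi$ and $\Psi$ are mutually inverse: $\Phi(\Psi(f))(g_1,\dots,g_n)=\Psi(f)(e_{(g_1,\dots,g_n)})=e_{(g_1,\dots,g_n)}\la f(g_1,\dots,g_n)=f(g_1,\dots,g_n)$; and $\Psi(\Phi(\f))(s(g_1,\dots,g_n))=s\la\Phi(\f)(g_1,\dots,g_n)=s\la\f(e_{(g_1,\dots,g_n)})=\f(s\la e_{(g_1,\dots,g_n)})=\f(s(g_1,\dots,g_n))$, using that $\f$ is a $\kpar G$-morphism and that $se_{(g_1,\dots,g_n)}=s$ in $\cS G$ by \cref{t=se_(g_1...g_n)<=>t^(-1)t<=e_(g_1...g_n)}. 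The case $n=0$ is handled the same way with $P_0=\kpar G$, where $\Hom_{\kpar G}(\kpar G,M)\cong M$ canonically via $\f\mapsto\f(1_{\cS G})$, matching $C^0_{par}(G,M)=M$. The only genuinely delicate point — and the step I would treat most carefully — is the well-definedness of $\Psi(f)$ under the identifications \cref{s(g_1...g_n)=t(h_1...h_n)<=>...} together with the verification that $u\la s(g_1,\dots,g_n)$ again lies in the spanning set \cref{s(g_1...g_n):s^(-1)s<=e_(g_1...g_n)}; everything else is a routine unwinding of definitions.
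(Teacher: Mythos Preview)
Your proof is correct and follows essentially the same approach as the paper: both identify $\Hom_{\kpar G}(P_n,M)$ with $C^n_{par}(G,M)$ via $\f\mapsto\bigl(\f(e_{(g_1,\dots,g_n)}(g_1,\dots,g_n))\bigr)_{g_1,\dots,g_n}$ and its inverse $f\mapsto\bigl(s(g_1,\dots,g_n)\mapsto s\la f(g_1,\dots,g_n)\bigr)$. The paper is more concise, invoking the standard isomorphisms $\Hom_{\kpar G}(\bigoplus A_i,-)\cong\prod\Hom_{\kpar G}(A_i,-)$ and $\Hom_{\kpar G}(\kpar G\,e,M)\cong e\la M$, whereas you unpack these by hand using the basis description of \cref{s(g_1...g_n)-basis}; the explicit form you give matches exactly what the paper records in \cref{Hom(P_n_M)-cong-C_par^n(G_M)-precise-form}.
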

	\begin{proof} 	
		The case $n=0$ is explained by the fact that $P_0$ is a free $\kpar G$-module of rank $1$. Now let $n\in\bbN$ and observe using \cref{P_n-defn} and the standard isomorphism $\Hom(\bigoplus A_i,-) \cong \prod\Hom(A_i,-)$ (see, for example, \cite[p. 25]{Maclane}) that
			\begin{align*}
				\Hom_{\kpar G} (P_n, M)&\cong \prod_{g_1,\dots,g_n\in G}\Hom_{\kpar G}(\kpar Ge_{(g_1,\dots,g_n)},M)\\
				&\cong\prod_{g_1,\dots,g_n\in G}e_{(g_1,\dots,g_n)}\cdot M  \\
				&\cong C_{par}^n(G,M).
			\end{align*}
		\end{proof}

		\begin{rem}\label{Hom(P_n_M)-cong-C_par^n(G_M)-precise-form}
			With respect to the isomorphism from \cref{Hom(P_n_M)-in-terms-of-C^n_par}  any $\varphi\in\Hom_{\kpar G}(P_n,M)$ is mapped  to $f_\varphi\in C_{par}^n(G,M)$, where
			\begin{align*}
				f_\varphi(g_1,\dots,g_n)=\varphi(e_{(g_1,\dots,g_n)}(g_1,\dots,g_n)).
			\end{align*}
			Conversely,  each $f'\in C_{par}^n(G,M)$  corresponds to  $\varphi\in\Hom_{\kpar G}(P_n,M)$ defined by 
			\begin{align}\label{vf_f(s(g_1...g_n))=s-la-f(g_1...g_n)}
				\varphi(s(g_1,\dots,g_n))=s\la f'(g_1,\dots,g_n).
			\end{align}
		\end{rem}

	\begin{defn}\label{dl^n-defn}
		Let $M$ be a $\kpar G$-module and $n\in\bbN\cup\{0\}$. Define the $K$-linear map $\dl^n:C^n_{par}(G,M)\to C^{n+1}_{par}(G,M)$ as follows:
		\begin{align}
			(\dl^0 m)(g)&=[g]\la m-e_g\la m,\ m\in C^0_{par}(G,M),\label{(dl^0m)(g)=...}\\
			(\dl^nf)(g_1,\dots,g_{n+1})&=[g_1]\la f(g_2,\dots,g_{n+1})\notag\\
			&\quad +\sum_{i=1}^n (-1)^i e_{g_1\dots g_i}\la f(g_1,\dots,g_ig_{i+1},\dots,g_{n+1})\notag\\
			&\quad + (-1)^{n+1} e_{g_1\dots g_{n+1}}\la f(g_1,\dots,g_n),\ n\in\bbN,\ f\in C^n_{par}(G,M).\label{(dl^nf)(g_1...g_n)=...}
		\end{align}
	\end{defn}
	
	\begin{lem}\label{dl:C^n_par(G_M)->C^(n+1)_par(G_M)}
		For all $n\in\bbN\cup\{0\}$ and $f\in C_{par}^n(G,M)$ we have 
		\begin{align}\label{dl^nf=f-circ-cb_(n+1)}
			\dl^nf=f\circ\cb_{n+1},
		\end{align}
		where $f$ and $\dl^nf$ are identified with the morphisms from $\Hom_{\kpar G}(P_n,M)$ as in \cref{Hom(P_n_M)-in-terms-of-C^n_par}. In particular,
		\begin{align*}
			C^0_{par}(G,M)\arr{\dl^0}C^1_{par}(G,M)\arr{\dl^1}\dots
		\end{align*} 
		is a cochain complex of abelian groups.
	\end{lem}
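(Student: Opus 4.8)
The plan is to establish the identity \cref{dl^nf=f-circ-cb_(n+1)} by a direct computation on generators; once this is in hand, the ``cochain complex'' assertion is purely formal. Throughout I use the explicit form of the isomorphism $\Hom_{\kpar G}(P_n,M)\cong C^n_{par}(G,M)$ recorded in \cref{Hom(P_n_M)-cong-C_par^n(G_M)-precise-form}: to $f\in C^n_{par}(G,M)$ there corresponds the $\kpar G$-linear map $\f_f:P_n\to M$ with $\f_f(s(g_1,\dots,g_n))=s\la f(g_1,\dots,g_n)$ on the $K$-basis of $P_n$ from \cref{s(g_1...g_n)-basis} (and $\f_f(s\emp)=s\la f$ when $n=0$, under $P_0=\kpar G$), and $\dl^nf$ corresponds to $\f_{\dl^nf}:P_{n+1}\to M$ with $\f_{\dl^nf}(s(g_1,\dots,g_{n+1}))=s\la(\dl^nf)(g_1,\dots,g_{n+1})$. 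Since $\f_{\dl^nf}$ and $\f_f\circ\cb_{n+1}$ are both $\kpar G$-linear maps $P_{n+1}\to M$, it suffices to check that they agree on each basis element $s(g_1,\dots,g_{n+1})$, i.e. for all $s\in\cS G$ with $s\m s\le e_{(g_1,\dots,g_{n+1})}$ (and, for $n=0$, on $s(g)$ with $s\m s\le e_g$).

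Next I would evaluate both maps on such a generator. Expanding $\cb_{n+1}$ via \cref{d_n:P_n->P_(n+1)-defn}, using the $\kpar G$-linearity of $\f_f$ and the defining values $\f_f((g_1,\dots,g_n))=f(g_1,\dots,g_n)$ of \cref{Hom(P_n_M)-cong-C_par^n(G_M)-precise-form} (so that, for instance, $\f_f$ sends $s[g_1](g_2,\dots,g_{n+1})$ to $s[g_1]\la f(g_2,\dots,g_{n+1})$ and $s(g_1,\dots,g_ig_{i+1},\dots,g_{n+1})$ to $s\la f(g_1,\dots,g_ig_{i+1},\dots,g_{n+1})$), one obtains
\begin{align*}
(\f_f\circ\cb_{n+1})(s(g_1,\dots,g_{n+1}))&=s[g_1]\la f(g_2,\dots,g_{n+1})\\
&\quad+\sum_{i=1}^n(-1)^is\la f(g_1,\dots,g_ig_{i+1},\dots,g_{n+1})\\
&\quad+(-1)^{n+1}s\la f(g_1,\dots,g_n),
\end{align*}
whereas \cref{dl^n-defn} gives
\begin{align*}
\f_{\dl^nf}(s(g_1,\dots,g_{n+1}))&=s[g_1]\la f(g_2,\dots,g_{n+1})\\
&\quad+\sum_{i=1}^n(-1)^ise_{g_1\dots g_i}\la f(g_1,\dots,g_ig_{i+1},\dots,g_{n+1})\\
&\quad+(-1)^{n+1}se_{g_1\dots g_{n+1}}\la f(g_1,\dots,g_n).
\end{align*}
So the two expressions differ only by the explicit idempotent coefficients $e_{g_1\dots g_i}$ and $e_{g_1\dots g_{n+1}}$ occurring in $\dl^nf$, and it remains to see that these act trivially here.

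This absorption of idempotents is the one step I expect to require care (it is elementary but fiddly). The combinatorial observation is that $e_{(g_1,\dots,g_ig_{i+1},\dots,g_{n+1})}$ is exactly $e_{(g_1,\dots,g_{n+1})}$ with the single factor $e_{g_1\dots g_i}$ removed (since $g_1\cdots g_{i-1}(g_ig_{i+1})g_{i+2}\cdots g_j=g_1\cdots g_j$ for all $j\ge i+1$), and similarly $e_{(g_1,\dots,g_n)}$ is $e_{(g_1,\dots,g_{n+1})}$ with the factor $e_{g_1\dots g_{n+1}}$ removed. As $f(g_1,\dots,g_ig_{i+1},\dots,g_{n+1})\in e_{(g_1,\dots,g_ig_{i+1},\dots,g_{n+1})}\la M$ and $f(g_1,\dots,g_n)\in e_{(g_1,\dots,g_n)}\la M$ by \cref{C^n_par(G_M)-defn}, re-inserting the missing factor gives $e_{g_1\dots g_i}\la f(g_1,\dots,g_ig_{i+1},\dots,g_{n+1})=e_{(g_1,\dots,g_{n+1})}\la f(g_1,\dots,g_ig_{i+1},\dots,g_{n+1})$, and likewise $e_{g_1\dots g_{n+1}}\la f(g_1,\dots,g_n)=e_{(g_1,\dots,g_{n+1})}\la f(g_1,\dots,g_n)$. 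Finally, the defining condition $s\m s\le e_{(g_1,\dots,g_{n+1})}$ of the basis element yields $se_{(g_1,\dots,g_{n+1})}=s$ in $\cS G$ (because $ss\m s=s$ and $(s\m s)e_{(g_1,\dots,g_{n+1})}=s\m s$), so $s$ swallows the surviving idempotent and the two displays coincide. The case $n=0$ runs identically with $\cb_1(s(g))=s([g]\emp-\emp)$, $(\dl^0m)(g)=[g]\la m-e_g\la m$ and $s\m s\le e_g$. Along the way one also confirms, as \cref{dl^n-defn} tacitly requires, that $\dl^nf\in C^{n+1}_{par}(G,M)$: the summands with an explicit idempotent coefficient lie in $e_{(g_1,\dots,g_{n+1})}\la M$ by the above, and the first summand does too, since $[g_1]\la f(g_2,\dots,g_{n+1})=e_{(g_1,\dots,g_{n+1})}[g_1]\la f(g_2,\dots,g_{n+1})$ by the relations $[g]e_h=e_{gh}[g]$ and $e_{g_1}[g_1]=[g_1]$.

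Finally, the ``in particular'' statement follows at once. By \cref{dl^nf=f-circ-cb_(n+1)}, under the isomorphisms of \cref{Hom(P_n_M)-in-terms-of-C^n_par} the maps $\dl^n$ are carried to the maps $\f\mapsto\f\circ\cb_{n+1}$, that is, to the image of the complex \cref{...->P_1->P_0->B} of \cref{P_n-is-res-of-B} under the contravariant additive functor $\Hom_{\kpar G}(-,M)$. Since that proposition gives $\cb_{n+1}\circ\cb_{n+2}=0$, we get $(\f\circ\cb_{n+1})\circ\cb_{n+2}=\f\circ(\cb_{n+1}\circ\cb_{n+2})=0$ for every $\f$, hence $\dl^{n+1}\circ\dl^n=0$, so $(C^\bullet_{par}(G,M),\dl^\bullet)$ is a cochain complex of abelian groups. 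Thus the only genuine work in the proof is the idempotent bookkeeping of the previous paragraph: recognising that the coefficients $e_{g_1\dots g_i}$ built into $\dl^n$ are precisely what is needed to reassemble $e_{(g_1,\dots,g_{n+1})}$ from the face idempotents and then be absorbed into $s$.
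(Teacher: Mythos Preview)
Your proof is correct and follows essentially the same approach as the paper's: both verify \cref{dl^nf=f-circ-cb_(n+1)} on generators of $P_{n+1}$ by expanding $\cb_{n+1}$ and then absorbing the idempotents $e_{g_1\dots g_i}$ using that $f(g_1,\dots,g_ig_{i+1},\dots,g_{n+1})\in e_{(g_1,\dots,g_ig_{i+1},\dots,g_{n+1})}\la M$, with the cochain-complex assertion following formally from \cref{P_n-is-res-of-B}. The only cosmetic difference is that the paper checks the identity on the $\kpar G$-module generators $e_{(g_1,\dots,g_{n+1})}(g_1,\dots,g_{n+1})$, whereas you check it on all $K$-basis elements $s(g_1,\dots,g_{n+1})$; since both sides are $\kpar G$-linear this is the same verification, your version just carries the extra $s$ along.
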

	\begin{proof}
		It suffices to verify \cref{dl^nf=f-circ-cb_(n+1)} on the generators 
		\begin{align}\label{e_(g_1...g_n)(g_1...g_n)-generators}
		\{e_{(g_1,\dots,g_{n+1})}(g_1,\dots,g_{n+1})\mid g_1,\dots,g_{n+1}\in G\}
		\end{align}
		of $P_{n+1}$. We first consider the case $n=0$. Let $m\in C^0_{par}(G,M)$. Then, as an element of $\Hom_{\kpar G}(P_0,M)$, $m$ sends $s\emp$ to $s\la m$. Using \cref{cb_1(s(g)=s([g]()-(),vf_f(s(g_1...g_n))=s-la-f(g_1...g_n),(dl^0m)(g)=...}, we have
		\begin{align*}
			m\circ \cb_1(e_g(g))&=m(e_g([g]\emp-\emp))=m([g]\emp-e_g\emp)\\
			&=[g]\la m-e_g\la m=(\dl^0m)(g)\\
			&=e_g\la (\dl^0m)(g)=(\dl^0m)(e_g(g)).
		\end{align*}
		Now let $n\in\bbN$ and $f$ be a function from $C_{par}^n(G,M)$. By \cref{cb_n(s(g_1...g_n)=...),vf_f(s(g_1...g_n))=s-la-f(g_1...g_n),(dl^nf)(g_1...g_n)=...,C^0_par(G_M)=...}
		\begin{align*}
			f\circ\cb_{n+1}(e_{(g_1,\dots,g_{n+1})}(g_1,\dots,g_{n+1}))&=e_{(g_1,\dots,g_{n+1})}\la([g_1]\la f(g_2,\dots,g_{n+1})\\
			&\quad +\sum_{i=1}^n (-1)^i f(g_1,\dots,g_ig_{i+1},\dots,g_{n+1})\\
			&\quad + (-1)^{n+1} f(g_1,\dots,g_n))\\
			&=e_{(g_1,\dots,g_{n+1})}\la([g_1]\la f(g_2,\dots,g_{n+1})\\
			&\quad +\sum_{i=1}^n (-1)^i e_{g_1\dots g_i}\la f(g_1,\dots,g_ig_{i+1},\dots,g_{n+1})\\
			&\quad + (-1)^{n+1} e_{g_1\dots g_{n+1}}\la f(g_1,\dots,g_n))\\
			&=(\dl^nf)(e_{(g_1,\dots,g_{n+1})}(g_1,\dots,g_{n+1})).
		\end{align*}
	\end{proof}
	
	\begin{defn}\label{Z^n_par-and-B^n_par}
		Denote $\ker{\dl^n}$ by $Z^n_{par}(G,M)$, $n\in\bbN\cup\{0\}$, and $\im{\dl^n}$ by $B^n_{par}(G,M)$, $n\in\bbN$, where $\dl^n$ is given by \cref{(dl^0m)(g)=...,(dl^nf)(g_1...g_n)=...}.
	\end{defn}
	
	\begin{thrm}\label{H^n_par(G_M)=ker-dl^n-mod-im-dl^(n-1)}
		Let $G$ be a group and $M$ a $\kpar G$-module. Then $H^0_{par}(G,M)\cong Z^0_{par}(G,M)$ and $H^n_{par}(G,M)\cong Z^n_{par}(G,M)/B^n_{par}(G,M)$.
	\end{thrm}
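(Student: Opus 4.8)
The plan is to recognize the cochain complex $(C^\bullet_{par}(G,M),\dl^\bullet)$ of \cref{dl^n-defn} as the complex $\Hom_{\kpar G}(P_\bullet,M)$ obtained by applying $\Hom_{\kpar G}(-,M)$ to the projective resolution \cref{...->P_1->P_0->B} of $B$, and then to use that the cohomology of the latter computes $\mathrm{Ext}^n_{\kpar G}(B,M)$, which is precisely the cohomology $H^n_{par}(G,M)$ of~\cite{AAR} (there $B$ is the trivial $\kpar G$-module and $H^n_{par}(G,M)$ is obtained from a projective resolution of $B$, hence equals $\mathrm{Ext}^n_{\kpar G}(B,M)$, so it may be computed from \emph{any} such resolution).

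Concretely: by \cref{P_n-is-res-of-B} the sequence \cref{...->P_1->P_0->B} is a projective resolution of $B$ in the category of $\kpar G$-modules, so $\mathrm{Ext}^n_{\kpar G}(B,M)$ is the $n$-th cohomology group of the cochain complex $\Hom_{\kpar G}(P_0,M)\to\Hom_{\kpar G}(P_1,M)\to\cdots$, whose differentials are $\cb_{n+1}^*=\Hom_{\kpar G}(\cb_{n+1},M)$. By \cref{Hom(P_n_M)-in-terms-of-C^n_par} there are isomorphisms of abelian groups $\Hom_{\kpar G}(P_n,M)\cong C^n_{par}(G,M)$, and under the explicit form of these isomorphisms recorded in \cref{Hom(P_n_M)-cong-C_par^n(G_M)-precise-form} the differential $\cb_{n+1}^*$ is carried to $\dl^n$ --- this is exactly the identity $\dl^nf=f\circ\cb_{n+1}$ from \cref{dl:C^n_par(G_M)->C^(n+1)_par(G_M)}. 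Hence $\Hom_{\kpar G}(P_\bullet,M)$ and $(C^\bullet_{par}(G,M),\dl^\bullet)$ are isomorphic cochain complexes and have isomorphic cohomology in every degree.

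It then remains to read off the statement in the two cases. For $n\in\bbN$ the $n$-th cohomology group of $(C^\bullet_{par}(G,M),\dl^\bullet)$ is $\ker\dl^n/\im\dl^{n-1}=Z^n_{par}(G,M)/B^n_{par}(G,M)$ by \cref{Z^n_par-and-B^n_par}, so $H^n_{par}(G,M)\cong Z^n_{par}(G,M)/B^n_{par}(G,M)$. For $n=0$ the $0$-th cohomology of the complex is $\ker\dl^0=Z^0_{par}(G,M)$, while $\mathrm{Ext}^0_{\kpar G}(B,M)=\Hom_{\kpar G}(B,M)$; these coincide because, by exactness of \cref{...->P_1->P_0->B}, the map $\cb_0\colon P_0=\kpar G\to B$ is onto with kernel $\im\cb_1$, so $B\cong\kpar G/\im\cb_1$, and a short computation from \cref{cb_1(s(g)=s([g]()-()} (using $e_g[g]=[g]$ and that $\{e_g(g)\mid g\in G\}$ generates $P_1$) shows that $\im\cb_1$ is the left ideal of $\kpar G$ generated by $\{[g]-e_g\mid g\in G\}$; hence a $\kpar G$-module homomorphism $B\to M$ is the same thing as an element $m\in M$ with $[g]\la m=e_g\la m$ for all $g\in G$, i.e. $\Hom_{\kpar G}(B,M)$ is exactly the group \cref{H^0_par-defn}. (The equality $Z^0_{par}(G,M)=\{m\in M\mid\forall g\in G:\ [g]\la m=e_g\la m\}$ is in any case immediate from the formula $(\dl^0m)(g)=[g]\la m-e_g\la m$ of \cref{(dl^0m)(g)=...}.) Thus $H^0_{par}(G,M)\cong Z^0_{par}(G,M)$.

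Granting \cref{P_n-is-res-of-B,Hom(P_n_M)-in-terms-of-C^n_par,dl:C^n_par(G_M)->C^(n+1)_par(G_M)}, the whole argument is formal --- a combination of the standard fact that $\mathrm{Ext}$ is computed by any projective resolution with the identification of $\Hom_{\kpar G}(P_\bullet,M)$ and $(C^\bullet_{par}(G,M),\dl^\bullet)$. The only point demanding a little attention is the degree-zero bookkeeping just carried out: checking that the bottom cohomology group is the kernel of $\dl^0$ itself, and that $\Hom_{\kpar G}(B,M)$ genuinely is the group described by \cref{H^0_par-defn}. I do not expect any real obstacle.
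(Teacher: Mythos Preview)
Your proof is correct and follows essentially the same approach as the paper: invoke \cref{P_n-is-res-of-B,Hom(P_n_M)-in-terms-of-C^n_par,dl:C^n_par(G_M)->C^(n+1)_par(G_M)} to identify $(C^\bullet_{par}(G,M),\dl^\bullet)$ with $\Hom_{\kpar G}(P_\bullet,M)$, and then appeal to the fact that $H^n_{par}(G,M)$ in~\cite{AAR} is defined as $\mathrm{Ext}^n_{\kpar G}(B,M)$, computable from any projective resolution. The only difference is that you spell out the degree-zero bookkeeping (identifying $\Hom_{\kpar G}(B,M)$ with the group in \cref{H^0_par-defn}) more explicitly than the paper does, which is a harmless elaboration.
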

	\begin{proof}
		This follows from \cref{P_n-is-res-of-B,Hom(P_n_M)-in-terms-of-C^n_par,dl:C^n_par(G_M)->C^(n+1)_par(G_M)}.
	\end{proof}
	
	\begin{rem}
		For a $\kpar G$-module $\A$ coming from a partial $G$-module $(\A,\af)$ and $n\in\bbN$ we have 
		in view of \cref{from-pMod-to-KparG-mod}
		\begin{align*}
			C_{par}^n(G,\A)=\{f:G^n\to \A\mid f(g_1,\dots,g_n)\in 1_{(g_1,\dots,g_n)}\A\},
		\end{align*}
		where 
			\begin{align*}
				1_{(g_1,\dots,g_n)}=1_{g_1}1_{g_1g_2}\dots 1_{g_1\dots g_n}\in \A .
		\end{align*}
		Then formulas \cref{(dl^0m)(g)=...,(dl^nf)(g_1...g_n)=...} take the following form
		\begin{align}
			(\dl^0 a)(g)&=\af_g(1_{g\m}a)-1_ga,\ a\in C^0_{par}(G,\A)=\A,\label{(dl^0a)(g)=af_g(1_(g-inv)a)-1_ga}\\
			(\dl^nf)(g_1,\dots,g_{n+1})&=\af_{g_1}\left(1_{g\m_1}f(g_2,\dots,g_{n+1})\right)\notag\\
			&\quad +\sum_{i=1}^n (-1)^i 1_{g_1\dots g_i}f(g_1,\dots,g_ig_{i+1},\dots,g_{n+1})\notag\\
			&\quad + (-1)^{n+1} 1_{g_1\dots g_{n+1}} f(g_1,\dots,g_n),   \label{(dl^nf)(g_1...g_(n+1))}
		\end{align} 	 
		$n\in\bbN, f\in C^n_{par}(G,M)$. 
	\end{rem}

	\section{Globalization}

	\subsection{From globalization to an extendibility property}\label{sec:tildew}
	Throughout this section $\af$ will be a unital partial action of a group $G$ on a (unital) algebra $\A$. We regard $\A$ as a $\kpar G$-module in a natural way (see \cref{from-pMod-to-KparG-mod}). We also fix $(\B,\bt)$ an enveloping action of $\af$ (see \cite[Definition 4.2]{DE}) with an injective morphism $\f:(\A ,\af)\to(\B,\bt)$.  The algebra $\B$ does not always have  an identity element, and for our   technique we need  to have  a unital algebra.  Instead of assuming that $\B $ has $1_{\B} $, we shall work more generally with the multiplier algebra $\M(\B)$ of $\B$.
		
	We recall that  the {\it multiplier algebra} $\M (\B)$ of an algebra $\B$ is the
	set $$\M (\B)= \{(R,L) \in {\rm End}(_{\B} \B) \times {\rm End}(\B_{\B}) : (aR)b = a(Lb) \, \mbox {for all}\, a,b
	\in \B \}$$ with component-wise addition and multiplication (for more details see \cite{DdRS,DE}). Here we use the 
	right-hand side notation for  left $\B$-module homomorphisms,  i.e.  we write $b \mapsto
	b\gamma $  for  $\gamma : {_{\B} \B} \to {_{\B} \B},$ while for a right $\B$-module homomorphism
	$\gamma : \B _{\B} \to \B _{\B}$ the usual notation is used: $b \mapsto \gamma b.$ 
	
	For a multiplier  $\gamma = (R,L) \in \M (\B)$ and $b \in \B$ we  set $b \gamma = b R$ and $\gamma b = L b.$ Thus
	one always has $(a \gamma ) b = a (\gamma b)$ for arbitrary $a,b \in \B$.
		
	The action $\bt$ induces an action $\bt^*$ of $G$ on $\M(\B)$, where $\bt^*_g(u)=\bt_g u\bt\m_g$ for $u\in\M(\B)$ and $g\in G$. Denote by $C^n(G,\M(\B))$, $Z^n(G,\M(\B))$, $B^n(G,\M(\B))$ and 
	$H^n(G,\M(\B))$ the corresponding (abelian) groups of $n$-cochains, $n$-cocycles, $n$-coboundaries and $n$-cohomologies of $G$ with values in the additive group of $\M(\B)$. 
	\begin{defn}\label{restr-defn}
		Given $n\in\bbN$ and $u\in C^n(G,\M(\B))$, define the {\it restriction} of $u$ to $\A$ to be the map $w:G^n\to\A$, such that
		\begin{align}\label{w-is-restr-of-u}
			\f(w(g_1,\dots,g_n))=\f(1_{(g_1,\dots,g_n)})u(g_1,\dots,g_n),
		\end{align}
		where $g_1,\dots,g_n\in G$. If $n=0$ and $u\in C^0(G,\M(\B))=\M(\B)$, then $w$ is the element of $\A$, satisfying \cref{w-is-restr-of-u}, in which $1_{(g_1,\dots,g_n)}$ means $1_\A$. 
	\end{defn}
	Notice that in \cref{w-is-restr-of-u} we could replace $\f(1_{(g_1,\dots,g_n)})u(g_1,\dots,g_n)$ by its ``left version'' $u(g_1,\dots,g_n)\f(1_{(g_1,\dots,g_n)})$. But in fact the two versions coincide. Indeed, $\f(\A)$ is an ideal of $\B$, so $u$ restricted to $\f(\A)$ is a multiplier of $\f(\A)$. Since $\f(1_{(g_1,\dots,g_n)})$ is a central idempotent of $\f(\A)$, we have $\f(1_{(g_1,\dots,g_n)})u(g_1,\dots,g_n)=\f(u(g_1,\dots,g_n)1_{(g_1,\dots,g_n)})$ by \cite[Remark 5.2]{DK2}. The observation that the multipliers of $\B$ (and of $\f(\A)$ as well) ``commute'' with central idempotents of $\f(\A)$ will be implicitly used several times in what follows.

	We will write $\rho(u)=w$ when $w$ is a restriction of $u$. Clearly, $\rho(u)\in C^n_{par}(G,\A)$, as the right-hand side of \cref{w-is-restr-of-u} is stable under the multiplication by $\f(1_{(g_1,\dots,g_n)})$ on the left.
	
	\begin{prop}\label{restr-hom-of-cohom-groups}
		The restriction map $\rho:C^n(G,\M(\B))\to C^n_{par}(G,\A)$ induces a homomorphism of the cohomology groups $H^n(G,\M(\B))\to H^n_{par}(G,\A)$.
	\end{prop}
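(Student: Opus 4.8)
The plan is to show that $\rho$ is a morphism of cochain complexes from $C^\bullet(G,\M(\B))$, equipped with its usual coboundary operator $\Delta^\bullet$ associated with the $G$-action $\bt^*$ on $\M(\B)$, to the complex $(C^\bullet_{par}(G,\A),\dl^\bullet)$ of \cref{dl^n-defn}, and then to invoke \cref{H^n_par(G_M)=ker-dl^n-mod-im-dl^(n-1)}. Additivity of $\rho$ is immediate from \cref{w-is-restr-of-u}, since its right-hand side is additive in $u$ and $\f$ is injective; hence the whole content is the cochain-map identity
\begin{align}\label{rho-is-cochain-map}
	\rho(\Delta^n u)=\dl^n(\rho(u)),\qquad u\in C^n(G,\M(\B)),\ n\in\bbN\cup\{0\}.
\end{align}
Granting \cref{rho-is-cochain-map}, $\rho$ sends $Z^n(G,\M(\B))$ into $Z^n_{par}(G,\A)$ and, for $n\ge1$, $B^n(G,\M(\B))$ into $B^n_{par}(G,\A)$; it therefore induces a homomorphism $Z^n/B^n\to Z^n_{par}/B^n_{par}$, which under the isomorphisms of \cref{H^n_par(G_M)=ker-dl^n-mod-im-dl^(n-1)} is the asserted homomorphism $H^n(G,\M(\B))\to H^n_{par}(G,\A)$ (for $n=0$ there are no coboundaries and it suffices that $\rho(Z^0)\sst Z^0_{par}$).

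To prove \cref{rho-is-cochain-map}, I would fix $u$, set $w=\rho(u)$, apply the injection $\f$ to both sides, and match the terms one by one, using throughout that $\f\colon(\A,\af)\to(\B,\bt)$ is an injective morphism of partial actions (so $\f\circ\af_g=\bt_g\circ\f$ on $\cD_{g\m}$) with $\f(\A)$ an ideal of $\B$. The computation rests on the partial-action idempotent identities
\begin{align*}
	1_{(g_1,\dots,g_{n+1})}&=1_{g_1\dots g_i}\,1_{(g_1,\dots,g_ig_{i+1},\dots,g_{n+1})}=1_{g_1\dots g_{n+1}}\,1_{(g_1,\dots,g_n)},\\
	\af_{g_1}\big(1_{g_1\m}\,1_{(g_2,\dots,g_{n+1})}\big)&=1_{(g_1,\dots,g_{n+1})},
\end{align*}
and on the bookkeeping rule $\bt_{g_1}(b)\cdot\bt^*_{g_1}(v)=\bt_{g_1}(b\cdot v)$ for $b\in\B$, $v\in\M(\B)$ (together with the standing remark that multipliers commute with the central idempotents $\f(1_{(\dots)})$ of $\f(\A)$). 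For the middle and last summands of $\Delta^n u$ the first identity immediately gives
\begin{align*}
	\f(1_{(g_1,\dots,g_{n+1})})\,u(g_1,\dots,g_ig_{i+1},\dots,g_{n+1})&=\f\big(1_{g_1\dots g_i}\,w(g_1,\dots,g_ig_{i+1},\dots,g_{n+1})\big),\\
	\f(1_{(g_1,\dots,g_{n+1})})\,u(g_1,\dots,g_n)&=\f\big(1_{g_1\dots g_{n+1}}\,w(g_1,\dots,g_n)\big),
\end{align*}
which are $\f$ of the corresponding summands of $\dl^n w$.

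The main obstacle will be the leading term $\f(1_{(g_1,\dots,g_{n+1})})\,\bt^*_{g_1}(u(g_2,\dots,g_{n+1}))$, which has to be recognized as $\f\big(\af_{g_1}(1_{g_1\m}w(g_2,\dots,g_{n+1}))\big)$. The idea is to first rewrite $\f(1_{(g_1,\dots,g_{n+1})})=\bt_{g_1}\big(\f(1_{g_1\m})\,\f(1_{(g_2,\dots,g_{n+1})})\big)$ using the second idempotent identity and $\f\circ\af_{g_1}=\bt_{g_1}\circ\f$ on $\cD_{g_1\m}$, and then to apply the bookkeeping rule:
\begin{align*}
	\f(1_{(g_1,\dots,g_{n+1})})\,\bt^*_{g_1}(u(g_2,\dots,g_{n+1}))
	&=\bt_{g_1}\big(\f(1_{g_1\m})\cdot\f(1_{(g_2,\dots,g_{n+1})})\,u(g_2,\dots,g_{n+1})\big)\\
	&=\bt_{g_1}\big(\f(1_{g_1\m}\,w(g_2,\dots,g_{n+1}))\big)=\f\big(\af_{g_1}(1_{g_1\m}\,w(g_2,\dots,g_{n+1}))\big),
\end{align*}
the last step again by the intertwining property, since $1_{g_1\m}w(g_2,\dots,g_{n+1})\in\cD_{g_1\m}$. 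Adding the three contributions and cancelling $\f$ yields \cref{rho-is-cochain-map}; the case $n=0$ is the same computation with every tuple-idempotent replaced by $1_\A$, and is shorter. This will complete the proof.
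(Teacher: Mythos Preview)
Your proposal is correct and follows essentially the same approach as the paper: both verify that $\rho$ commutes with the coboundary operators by applying $\f$ and matching summands term by term, using that $\f$ is a morphism of partial actions and the identity $\bt_{g_1}(b\,v)=\bt_{g_1}(b)\,\bt^*_{g_1}(v)$ for $b\in\B$, $v\in\M(\B)$. The only cosmetic difference is direction: the paper starts from $\f((\dl^n\rho(u))(g_1,\dots,g_{n+1}))$ and simplifies to $\f(1_{(g_1,\dots,g_{n+1})})(\dl^n u)(g_1,\dots,g_{n+1})$, whereas you start from the latter and work back; the computations are identical.
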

	\begin{proof}
		It is readily seen by \cref{w-is-restr-of-u} that $\rho$ is a homomorphism, so we only need to show that $\rho$ commutes with the coboundary operators. Let $n=0$ and $u\in C^0(G,\M(\B))=\M(\B)$. Then for all $g\in G$ by \cref{w-is-restr-of-u} and the fact that $\f$ is a morphism of partial actions we have
		\begin{align*}
			\f((\dl^0\rho(u))(g))&=\f(\af_g(1_{g\m}\rho(u))-1_g\rho(u))\\
			&=\bt_g(\f(1_{g\m})\f(\rho(u)))-\f(1_g)\f(\rho(u))\\
			&=\bt_g(\f(1_{g\m})\f(1_\A)u)-\f(1_g)\f(1_\A)u\\
			&=\f(1_g)(\bt^*_g(u)-u)\\
			&=\f(1_g)(\dl^0u)(g)\\
			&=\f(\rho(\dl^0u)(g)),
		\end{align*}
		whence $\dl^0\rho(u)=\rho(\dl^0u)$.
		
		Consider now $n\in\bbN$ and $u\in C^n(G,\M(\B))$. For arbitrary $g_1,\dots,g_{n+1}\in G$, using \cref{w-is-restr-of-u} as above, one has
		\begin{align*}
			\f((\dl^n\rho(u))(g_1,\dots,g_{n+1}))&=\f\Big(\af_{g_1}\Big(1_{g\m_1}\rho(u)(g_2,\dots,g_{n+1})\Big)\\
			&\quad +\sum_{i=1}^n (-1)^i 1_{g_1\dots g_i}\rho(u)(g_1,\dots,g_ig_{i+1},\dots,g_{n+1})\\
			&\quad + (-1)^{n+1} 1_{g_1\dots g_{n+1}}\rho(u)(g_1,\dots,g_n)\Big)\\
			&=\bt_{g_1}\Big(\f\Big(1_{g\m_1}\Big)\f(\rho(u)(g_2,\dots,g_{n+1}))\Big)\\
			&\quad +\sum_{i=1}^n (-1)^i \f(1_{g_1\dots g_i})\f(\rho(u)(g_1,\dots,g_ig_{i+1},\dots,g_{n+1}))\\
			&\quad + (-1)^{n+1} \f(1_{g_1\dots g_{n+1}})\f(\rho(u)(g_1,\dots,g_n))\\
			&=\bt_{g_1}\Big(\f\Big(1_{g\m_1}\Big)\f(1_{(g_2,\dots,g_{n+1})})u(g_2,\dots,g_{n+1})\Big)\\
			&\quad +\sum_{i=1}^n (-1)^i \f(1_{g_1\dots g_i})\f(1_{(g_1,\dots,g_ig_{i+1},\dots,g_{n+1})})u(g_1,\dots,g_ig_{i+1},\dots,g_{n+1})\\
			&\quad + (-1)^{n+1} \f(1_{g_1\dots g_{n+1}})\f(1_{(g_1,\dots,g_n)})u(g_1,\dots,g_n)\\
			&=1_{(g_1,\dots,g_{n+1})}(\bt^*_{g_1}(u(g_2,\dots,g_{n+1}))\\
			&\quad +\sum_{i=1}^n (-1)^i u(g_1,\dots,g_ig_{i+1},\dots,g_{n+1})\\
			&\quad + (-1)^{n+1} u(g_1,\dots,g_n))\\
			&=1_{(g_1,\dots,g_{n+1})}(\dl^nu)(g_1,\dots,g_{n+1})\\
			&=\f(\rho(\dl^nu)(g_1,\dots,g_{n+1})),
		\end{align*}
		so that $\dl^n\rho(u)=\rho(\dl^nu)$.
	\end{proof}
	
	\begin{defn}\label{glob-coc-defn}
		Given $w\in Z^n_{par}(G,\A)$, by a {\it globalization} of $w$ we mean $u\in Z^n(G,\M(\B))$ satisfying \cref{w-is-restr-of-u}. If $w$ admits a globalization, then we say that $w$ is {\it globalizable}.
	\end{defn}
	
	Recall that the enveloping action $(\B,\bt)$ for $(\A,\af)$ was constructed in~\cite{DE} as the restriction of the global action $(\cF,\bt)$ to the subalgebra 
	\begin{align}\label{B=sum-bt_g(f(A))}
		\B=\sum_{g\in G}\bt_g(\f(\A)).
	\end{align}
	Here $\cF$ is the ring of functions $G\to\A$ and 
	\begin{align}\label{beta_x(f)_t}
		\beta_g(f)|_t=f(g\m t) 
	\end{align}
	for all $x,t\in G$, where the notation $f|_t$ from~\cite{DE} is used for the value $f(t)$ of $f\in\cF$ at $t\in G$. The injective morphism $\f:\A\to\cF$ is then defined by the formula
	\begin{align}\label{f(a)|_t=af_t-inv(1_ta)}
		\f(a)|_t=\af_{t\m}(1_ta).
	\end{align} 
	Clearly, $\f(\A)\sst\B$, so $\f$ is a morphism $(\A,\af)\to(\B,\bt)$ too. Since all enveloping actions of $(\A,\af)$ are isomorphic~\cite{DE} to each other, we may always assume that $(\B,\bt)$ and $\f$ are of this form.
	
	\begin{lem}\label{0-cocycle-is-globalizable}
		Any $w\in Z^0_{par}(G,\A)$ is uniquely globalizable.
	\end{lem}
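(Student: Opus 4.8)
The plan is to produce the globalization $u$ explicitly, as (two-sided) multiplication by the constant function with value $w$, and to obtain uniqueness from the fact that a $\bt^*$-invariant multiplier annihilated by $\f(1_\A)$ must be zero. First unwind the hypothesis: by \cref{(dl^0a)(g)=af_g(1_(g-inv)a)-1_ga}, $w\in Z^0_{par}(G,\A)$ means $w\in\A$ with $\af_g(1_{g\m}w)=1_gw$ for all $g\in G$; equivalently, by \cref{f(a)|_t=af_t-inv(1_ta)}, $\f(w)|_t=\af_{t\m}(1_tw)=1_{t\m}w$ for every $t\in G$. The computational core is that for all $a\in\A$ and $s\in G$
\begin{align*}
\af_{s\m}(1_sa)\,w=\af_{s\m}(1_saw)\qquad\text{and}\qquad w\,\af_{s\m}(1_sa)=\af_{s\m}(1_swa),
\end{align*}
obtained by writing $1_saw=(1_sa)(1_sw)$ (respectively $1_swa=(1_sw)(1_sa)$), applying the ring isomorphism $\af_{s\m}$, invoking the invariance $\af_{s\m}(1_sw)=1_{s\m}w$, and absorbing $1_{s\m}$ into $\af_{s\m}(1_sa)\in\cD_{s\m}$. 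Writing $\ol w\in\cF$ for the constant function $\ol w|_t=w$, a direct computation from \cref{beta_x(f)_t,f(a)|_t=af_t-inv(1_ta)} and the two identities above (with $s=g\m t$) gives, in $\cF$,
\begin{align*}
\bt_g(\f(a))\,\ol w=\bt_g(\f(aw))\qquad\text{and}\qquad\ol w\,\bt_g(\f(a))=\bt_g(\f(wa)),\qquad a\in\A,\ g\in G.
\end{align*}

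In particular, left and right multiplication by $\ol w$ map $\B=\sum_{g\in G}\bt_g(\f(\A))$ into itself, so the pair $u=(R,L)$ with $bR=b\ol w$, $Lb=\ol w b$, defines a multiplier of $\B$ (the axiom $(aR)b=a(Lb)$ being associativity in $\cF$). Since $\ol w$ is constant, it is fixed by $\bt$, i.e.\ $\bt_g(\ol w)=\ol w$ in $\cF$, so multiplication by $\ol w$ commutes with the ring automorphism $\bt_g$ of $\cF$ for every $g$; unwinding the conjugation action this shows $\bt^*_g(u)=u$ for all $g$, i.e.\ $u\in Z^0(G,\M(\B))$. Finally, $\f(1_\A)\,u=\f(1_\A)\,\ol w=\f(w)$ by the first identity above taken at $a=1_\A$, so $u$ restricts to $w$ in the sense of \cref{w-is-restr-of-u}; hence $u$ is a globalization of $w$.

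For uniqueness, let $u,u'$ be two globalizations and set $v=u-u'\in\M(\B)$. Then $\bt^*_g(v)=v$ for every $g$, and $\f(1_\A)v=0$; moreover $v\f(1_\A)=\f(1_\A)v=0$, because multipliers of $\B$ commute with central idempotents of $\f(\A)$ (the observation recorded after \cref{restr-defn}). Let $b=\bt_g(\f(a))\in\B$. Unwinding $\bt^*_g(v)=v$ gives $bv=\bt_g(\f(a)v)$ and $vb=\bt_g(v\f(a))$; since $\f(a)=\f(a)\f(1_\A)=\f(1_\A)\f(a)$ and $\f(1_\A)v=v\f(1_\A)=0$, both $\f(a)v$ and $v\f(a)$ vanish, whence $bv=vb=0$. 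As $\B$ is generated by such elements $b$, both components of $v$ are zero, so $v=0$ and $u=u'$.

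The only step needing genuine care is that multiplication by $\ol w$ preserves $\B$: this is precisely where the cocycle condition $\af_g(1_{g\m}w)=1_gw$ enters, and it breaks down without it — the constant function $\ol w$ is always a multiplier of the ideal $\f(\A)\cong\A$, but extends to a multiplier of all of $\B$ only under this invariance. The rest (the multiplier axiom, $\bt^*$-invariance, and the bookkeeping in the uniqueness argument) is routine once one tracks the conventions for $\M(\B)$ and the function-ring model $\cF$ of $\B$ recalled at the start of \cref{sec:tildew}.
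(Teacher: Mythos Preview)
Your proof is correct and follows essentially the same approach as the paper: define the globalization as (multiplication by) the constant function $\ol w\in\cF$ with value $w$, verify via the $0$-cocycle identity that multiplication by $\ol w$ preserves $\B$, check $\bt^*$-invariance from constancy, and deduce uniqueness from $\bt^*$-invariance together with $\f(1_\A)v=0$. The only difference is that the paper outsources the computations $\bt_g(\f(a))\ol w=\bt_g(\f(aw))$ and the uniqueness argument to \cite[Remark~2.3]{DKS}, whereas you unfold them explicitly.
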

	\begin{proof}
		Define $u\in C^0(G,\cF)=\cF$ to be the constant function taking the value $w\in\A$ at any $t\in G$. Using \cref{f(a)|_t=af_t-inv(1_ta),(dl^0a)(g)=af_g(1_(g-inv)a)-1_ga}, we obtain
		\begin{align*}
			\f(1_\A)|_tu|_t=1_{t\m}w=\af_{t\m}(1_tw)=\f(w)|_t,
		\end{align*}
		yielding \cref{w-is-restr-of-u}. The proof of the formula $\bt_g(\f(a))|_tu|_t=\bt_g(\f(aw))|_t$ from~\cite[Remark~2.3]{DKS} works here without any change, so $\bt_g(\f(\A))u\sst \f(\A)$. Hence $\B u\sst \B$ by \cref{B=sum-bt_g(f(A))}. In a similar way $u|_t\bt_g(\f(a))|_t=\bt_g(\f(wa))|_t$, which implies $u\B \sst \B$, and thus $u\in C^0(G,\M(\B))$. To prove the $0$-cocycle identity $\bt^*_g(u)=u$ for $u$, it suffices to show that $\bt_g(uf)=u\bt_g(f)$ for any $f\in\cF$. We have by \cref{beta_x(f)_t}
		\begin{align*}
			\bt_g(uf)|_t=(uf)|_{g\m t}=u|_{g\m t}f|_{g\m t}=u|_t\bt_g(f)|_t,
		\end{align*}
		whence $u\in Z^0(G,\M(\B))$.
		
		The uniqueness of $u$ is proved the same way as in~\cite[Remark~2.3]{DKS}.
	\end{proof}
	
	For the case $w\in Z^n_{par}(G,\A)$, $n\in\bbN$, we shall need an ``additive'' version of~\cite[Lemma~2.1]{DKS}.
	\begin{lem}\label{u-is-n-cocycle}
		Let $\wtl w\in C^n(G,\A)$. Then $u\in C^n(G,\cF)$, defined by
		\begin{align}\label{u-def-n>0}
			u(g_1,\dots,g_n)|_t&=(-1)^n\wtl w(t\m, g_1,\dots,g_{n-1})+\wtl w(t\m g_1, g_2, \dots,g_n)\notag\\
			&\quad+\sum_{i=1}^{n-1}(-1)^i\wtl w(t\m,g_1,\dots,g_ig_{i+1},\dots,g_n),
		\end{align}
		is an $n$-cocycle with respect to the action $\beta$ of $G$ on $\cF$.
	\end{lem}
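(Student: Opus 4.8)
The plan is to realize $u$ as a coboundary in the ordinary inhomogeneous cochain complex $C^\bullet(G,\cF)$ associated with the left action $\beta$ of $G$ on the abelian group $\cF$. Since $(\cF,\beta)$ is a genuine (global) $G$-module, that complex is a genuine complex of abelian groups, so $\dl^n\circ\dl^{n-1}=0$ and every coboundary is automatically a cocycle --- which is precisely the assertion. This is the additive counterpart of the direct verification carried out in~\cite[Lemma~2.1]{DKS}, but passing through a coboundary makes the argument almost immediate.

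Concretely, I would introduce $v\in C^{n-1}(G,\cF)$ by setting
\begin{align*}
	v(g_1,\dots,g_{n-1})|_t=\wtl w(t\m,g_1,\dots,g_{n-1}),\qquad g_1,\dots,g_{n-1},t\in G
\end{align*}
(when $n=1$ this reads $v\in C^0(G,\cF)=\cF$ with $v|_t=\wtl w(t\m)$). Since $\wtl w$ is merely a function $G^n\to\A$ and the elements of $C^\bullet(G,\cF)$ are unrestricted functions, both $v$ and $u$ are automatically well defined.

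The only real computation is to check the identity $u=\dl^{n-1}v$, where $\dl^{n-1}$ is the usual coboundary operator for the $\beta$-action. Evaluating at an arbitrary $t\in G$ and using $\beta_g(f)|_t=f(g\m t)$ (see \cref{beta_x(f)_t}), one gets
\begin{align*}
	\beta_{g_1}(v(g_2,\dots,g_n))|_t=v(g_2,\dots,g_n)|_{g_1\m t}=\wtl w(t\m g_1,g_2,\dots,g_n),
\end{align*}
whereas $v(g_1,\dots,g_ig_{i+1},\dots,g_n)|_t=\wtl w(t\m,g_1,\dots,g_ig_{i+1},\dots,g_n)$ for $1\le i\le n-1$ and $v(g_1,\dots,g_{n-1})|_t=\wtl w(t\m,g_1,\dots,g_{n-1})$. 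Collecting these $n+1$ summands with the signs prescribed by $\dl^{n-1}$ reproduces, term by term, the right-hand side of \cref{u-def-n>0}. Hence $u=\dl^{n-1}v$, so $\dl^nu=\dl^n\dl^{n-1}v=0$, i.e.\ $u\in Z^n(G,\cF)$.

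I do not expect any genuine obstacle here: the content reduces to the index bookkeeping in the last step --- matching the $n+1$ terms of $\dl^{n-1}v$ against the $n+1$ terms of \cref{u-def-n>0} --- which is purely mechanical, together with a trivial check of the degenerate case $n=1$ (where the sum in the definition of $u$ is empty and the identity $u=\dl^0v$ still holds by the same computation).
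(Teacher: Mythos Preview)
Your argument is correct and in fact slightly more economical than the paper's. The paper does not exhibit $u$ as a $\beta$-coboundary; instead it rewrites \cref{u-def-n>0} as
\[
u(g_1,\dots,g_n)|_t=\wtl w(g_1,\dots,g_n)-(\tl\dl^n\wtl w)(t\m,g_1,\dots,g_n),
\]
where $\tl\dl^\bullet$ is the coboundary operator for the \emph{trivial} $G$-action on $\A$, and then computes $(\dl^n u)(g_1,\dots,g_{n+1})|_t$ directly, identifying the result as $(\tl\dl^{n+1}\tl\dl^n\wtl w)(t\m,g_1,\dots,g_{n+1})=0$. Your approach bypasses this computation entirely by noticing that $u=\dl^{n-1}v$ for the $(n-1)$-cochain $v(g_1,\dots,g_{n-1})|_t=\wtl w(t\m,g_1,\dots,g_{n-1})$ in $C^{n-1}(G,\cF)$, after which $\dl^n u=0$ is immediate. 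The paper's route has the minor conceptual bonus of displaying the interplay between the $\beta$-complex on $\cF$ and the trivial-action complex on $\A$, but for the statement at hand your observation that $u$ is already a coboundary is the shortest path.
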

	\begin{proof}
		Observe by \cref{u-def-n>0} that
		\begin{align}\label{u-in-terms-of-tilde-delta}
			u(g_1,\dots,g_n)|_t=\wtl w(g_1,\dots,g_n)-(\tl\dl^n\wtl w)(t\m,g_1,\dots,g_n),
		\end{align}
		where $\tl\dl^n:C^n(G,\A)\to C^{n+1}(G,\A)$ is the coboundary operator which corresponds to the trivial $G$-module, i.e.
		\begin{align*}
			(\tl\dl^n\wtl w)(g_1,\dots,g_{n+1})&=\wtl w(g_2,\dots,g_{n+1})\\
			&\quad+\sum_{i=1}^n(-1)^i\wtl w(g_1,\dots,g_ig_{i+1},\dots,g_{n+1})\\
			&\quad+(-1)^{n+1}\wtl w(g_1,\dots,g_n).
		\end{align*}
		Calculating the value of $(\dl^nu)(g_1,\dots,g_{n+1})$ at $t\in G$, we obtain using \cref{beta_x(f)_t}
		\begin{align*}
			u(g_2,\dots,g_{n+1})|_{g\m_1 t}&+\sum_{i=1}^n (-1)^iu(g_1,\dots,g_ig_{i+1},\dots,g_{n+1})|_t\\
			&+(-1)^{n+1}u(g_1,\dots,g_n)|_t,
		\end{align*}
		which in view of \cref{u-in-terms-of-tilde-delta} equals
		\begin{align*}
			&\wtl w(g_2,\dots,g_{n+1})-(\tl\dl^n\wtl w)(t\m g_1,g_2,\dots,g_{n+1})\\
			&+\sum_{i=1}^n (-1)^i\wtl w(g_1,\dots,g_ig_{i+1},\dots,g_{n+1})\\
			&+\sum_{i=1}^n(-1)^{i+1}(\tl\dl^n\wtl w)(t\m,g_1,\dots,g_ig_{i+1},\dots,g_{n+1})\\
			&+(-1)^{n+1}\wtl w(g_1,\dots,g_n)+(-1)^n(\tl\dl^n\wtl w)(t\m,g_1,\dots,g_n).
		\end{align*}
		The latter is readily seen to be $(\tl\dl^{n+1}\tl\dl^n\wtl w)(t\m,g_1,\dots,g_{n+1})=0_\A$.
	\end{proof}
	
	As in~\cite[Theorem~2.4]{DKS}, the existence of a globalization of $w\in Z^n_{par}(G,\A)$ is equivalent to certain extendibility property. For any $f\in C^n(G,\A)$ define
	\begin{align} 
		(\tl\dl^n f)(g_1,\dots,g_{n+1})&=\af_{g_1}\left(1_{g\m_1}f(g_2,\dots,g_{n+1})\right)\notag\\  
		&\quad+\sum_{i=1}^n(-1)^i 1_{g_1}f(g_1,\dots,g_i g_{i+1},\dots,g_{n+1})\notag\\ 
		&\quad+(-1)^{n+1} 1_{g_1}f(g_1,\dots,g_n).\label{(tl-dl^n-f)(g_1...g_n)}
	\end{align}
	
	\begin{thrm}\label{w-glob-iff-exists-tilde-w}
		A cocycle $w\in Z^n_{par}(G,\A)$, $n\in\bbN$, is globalizable if and only if there exists $\wtl w\in C^n(G,\A)$ such that 
		\begin{align}\label{cocycle-ident-for-tilde-w}
			\tl\dl^n\wtl w=0
		\end{align} 
		and 
		\begin{align}\label{w-is-restr-of-tilde-w}
			w(g_1,\dots,g_n)=1_{(g_1,\dots,g_n)}\wtl w(g_1,\dots,g_n), 
		\end{align} 
		for all $g_1,\dots,g_n\in G$.
	\end{thrm}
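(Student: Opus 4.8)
The plan is to establish the two implications of \cref{w-glob-iff-exists-tilde-w} separately, using \cref{u-is-n-cocycle} as the bridge between partial cocycles on $\A$ and ordinary cocycles on $\cF$ (and hence on $\B$ and $\M(\B)$).

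For the ``if'' direction, suppose $\wtl w\in C^n(G,\A)$ satisfies \cref{cocycle-ident-for-tilde-w} and \cref{w-is-restr-of-tilde-w}. First I would feed $\wtl w$ into \cref{u-is-n-cocycle} to obtain $u\in Z^n(G,\cF)$ defined by \cref{u-def-n>0}. The next step is to check that $u$ actually takes values in $\M(\B)$, i.e.\ that $\B u\sst\B$ and $u\B\sst\B$; by \cref{B=sum-bt_g(f(A))} it suffices to control products of the form $\bt_g(\f(a))\cdot u(g_1,\dots,g_n)$, and here I expect the argument to mirror the $n=0$ computation in \cref{0-cocycle-is-globalizable} (which in turn quotes \cite[Remark~2.3]{DKS}), reducing everything to the pointwise formula \cref{beta_x(f)_t,f(a)|_t=af_t-inv(1_ta)}. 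Then I would verify the restriction identity \cref{w-is-restr-of-u}: applying $\f$ and evaluating at $t\in G$, the right-hand side $\f(1_{(g_1,\dots,g_n)})u(g_1,\dots,g_n)$ at $t$ becomes $1_{t\m(g_1,\dots,g_n)}$ times the bracketed sum in \cref{u-def-n>0}, and using \cref{u-in-terms-of-tilde-delta} this collapses, via $\tl\dl^n\wtl w=0$ in the form \cref{cocycle-ident-for-tilde-w} together with \cref{w-is-restr-of-tilde-w}, to $\f(w(g_1,\dots,g_n))|_t=\af_{t\m}(1_t w(g_1,\dots,g_n))$. The point is that the ``more global'' coboundary $\tl\dl^n$ of \cref{(tl-dl^n-f)(g_1...g_n)} differs from the partial one $\dl^n$ of \cref{(dl^nf)(g_1...g_(n+1))} only by replacing the idempotents $1_{g_1\dots g_i}$ with $1_{g_1}$, and after multiplying by the full idempotent $1_{(g_1,\dots,g_{n+1})}$ the two agree; this is exactly what makes \cref{w-is-restr-of-tilde-w} compatible with $w\in Z^n_{par}(G,\A)$ and lets the restriction of $u$ be $w$. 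Finally the cocycle property of $u$ over $\M(\B)$ is inherited from the cocycle property over $\cF$ proved in \cref{u-is-n-cocycle}, so $u$ is a globalization of $w$.

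For the ``only if'' direction, suppose $u\in Z^n(G,\M(\B))$ is a globalization of $w$, so \cref{w-is-restr-of-u} holds. The idea is to define $\wtl w\in C^n(G,\A)$ by pulling back $u$ along the evaluation of $\cF$ at $1_G$: concretely, set $\wtl w(g_1,\dots,g_n)=\f\m\bigl(\f(1_\A)\,u(g_1,\dots,g_n)\bigr)$, interpreting $u(g_1,\dots,g_n)$ as an element of $\cF\supseteq\B$ and using that $\f(\A)$ is an ideal so that $\f(1_\A)u(g_1,\dots,g_n)\in\f(\A)$; equivalently $\wtl w(g_1,\dots,g_n)$ is the element of $\A$ with $\f(\wtl w(g_1,\dots,g_n))|_t=\af_{t\m}(1_t\wtl w(g_1,\dots,g_n))$ matching $u(g_1,\dots,g_n)|_t$ suitably. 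Multiplying \cref{w-is-restr-of-u} through by $\f(1_{(g_1,\dots,g_n)})$ and using that this central idempotent of $\f(\A)$ commutes with multipliers gives \cref{w-is-restr-of-tilde-w}. It remains to deduce the global cocycle identity \cref{cocycle-ident-for-tilde-w}: apply $\f$ to $(\tl\dl^n\wtl w)(g_1,\dots,g_{n+1})$, use that $\f$ intertwines $\af$ with $\bt$, and recognize the resulting expression as $\f(1_\A)$ (or an appropriate translate) times the ordinary coboundary $(\dl^n u)(g_1,\dots,g_{n+1})$ computed with $\bt^*$, which vanishes because $u\in Z^n(G,\M(\B))$; since $\f$ is injective we get $\tl\dl^n\wtl w=0$.

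The main obstacle I anticipate is the bookkeeping with idempotents in the ``only if'' direction — making precise that $\wtl w(g_1,\dots,g_n)$ extracted from $u$ genuinely lands in $\A$ and that the ``more global'' coboundary $\tl\dl^n$ of \cref{(tl-dl^n-f)(g_1...g_n)}, with its asymmetric $1_{g_1}$ factors rather than the $\af$-twisted full idempotents, is the correct operator whose kernel corresponds to honest cocycles over $\M(\B)$. One has to check that the $1_{g_1}$ appearing in \cref{(tl-dl^n-f)(g_1...g_n)} is exactly what survives when one conjugates the global cocycle equation for $u$ back down to $\A$ via $\f$, and that no information is lost in passing between $\cF$, $\B$, and $\M(\B)$. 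Once the correspondence $\wtl w\leftrightarrow u$ of \cref{u-is-n-cocycle} is pinned down at the level of these idempotents, both implications become the routine computations sketched above, closely paralleling \cite[Theorem~2.4]{DKS}.
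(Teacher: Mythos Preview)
Your outline follows the paper's approach closely and is essentially correct, but you have swapped the roles of the two hypotheses in the ``if'' direction, and your $\M(\B)$ verification is too optimistic.

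In the paper, the restriction identity \cref{w-is-restr-of-u} is verified using the \emph{partial} cocycle identity $\dl^n w=0$ (which is given, since $w\in Z^n_{par}$), not \cref{cocycle-ident-for-tilde-w}: one computes $\f(w(g_1,\dots,g_n))|_t=\af_{t\m}(1_t w(g_1,\dots,g_n))$, expands via $(\dl^n w)(t\m,g_1,\dots,g_n)=0$, and then applies \cref{w-is-restr-of-tilde-w} to turn each $w$-term into a $\wtl w$-term, matching $1_{(t\m,g_1,\dots,g_n)}u(g_1,\dots,g_n)|_t$. Your reference to \cref{u-in-terms-of-tilde-delta} here is a red herring: the $\tl\dl^n$ appearing there is the \emph{trivial-module} coboundary used locally inside \cref{u-is-n-cocycle}, not the operator of \cref{(tl-dl^n-f)(g_1...g_n)}, so \cref{cocycle-ident-for-tilde-w} does not kill it.

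Conversely, \cref{cocycle-ident-for-tilde-w} is precisely what is used to show $u(g_1,\dots,g_n)\in\M(\B)$, and this does \emph{not} mirror the $n=0$ case. The key computation is that \cref{cocycle-ident-for-tilde-w} applied with first argument $t\m$ yields $1_{t\m}u(g_1,\dots,g_n)|_t=\af_{t\m}(1_t\wtl w(g_1,\dots,g_n))$, whence $u(g_1,\dots,g_n)\f(\A)\sst\f(\A)$. To pass from $\f(\A)$ to $\bt_t(\f(\A))$ one then uses the $\cF$-cocycle identity for $u$ (already established by \cref{u-is-n-cocycle}) to express $\bt_{t\m}(u(g_1,\dots,g_n))$ as a sum of values of $u$, each of which preserves $\f(\A)$.

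For the ``only if'' direction your sketch is right; the correct idempotent is $\f(1_{g_1})$ rather than $\f(1_\A)$, since every term of $(\tl\dl^n\wtl w)(g_1,\dots,g_{n+1})$ carries a factor $1_{g_1}$, and applying $\f$ exhibits $\f((\tl\dl^n\wtl w)(g_1,\dots,g_{n+1}))$ as $\f(1_{g_1})$ times the global $n$-cocycle identity for $u$.
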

	\begin{proof}
		If $w$ is globalizable and $u\in Z^n(G,\M(\B))$ is its globalization, then as in the proof of~\cite[Theorem~2.4]{DKS} we define
		\begin{align*}
			\f(\wtl w(g_1,\dots,g_n))=\f(1_\A)u(g_1,\dots,g_n)=u(g_1,\dots,g_n)\f(1_\A).
		\end{align*}
		Clearly, $w(g_1,\dots,g_n)\in\A$, since $\f(\A)$ is an ideal in $\B$, and moreover  \cref{w-is-restr-of-tilde-w} is satisfied. Using the formula
		\begin{align*}
			\bt^*_{g_1}(u(g_2,\dots,g_{n+1}))\f(1_{g_1})=\f\left(\af_{g_1}\left( 1_{g\m_1}\wtl w(g_2,\dots,g_{n+1})\right)\right),
		\end{align*}
		which follows from \cref{w-is-restr-of-tilde-w} as in the proof of~\cite[Theorem~2.4]{DKS}, we obtain \cref{cocycle-ident-for-tilde-w} by applying both sides of the cocycle identity
		\begin{align*}
			\bt^*_{g_1}(u(g_2,\dots,g_{n+1}))
			&+\sum_{i=1}^n (-1)^iu(g_1,\dots,g_ig_{i+1},\dots,g_{n+1})\\
			&+(-1)^{n+1}u(g_1,\dots,g_n)=0
		\end{align*}
		to $\f(1_{g_1})$.
		
		Conversely, given $\wtl w\in C^n(G,\A)$ satisfying \cref{cocycle-ident-for-tilde-w,w-is-restr-of-tilde-w}, define $u\in C^n(G,\cF)$ by \cref{u-def-n>0}. We immediately have $u\in Z^n(G,\cF)$ by \cref{u-is-n-cocycle}. Now, using \cref{f(a)|_t=af_t-inv(1_ta),w-is-restr-of-tilde-w,u-def-n>0} and the cocycle identity for $w$, we obtain
		\begin{align*}
			\f(w(g_1,\dots,g_n))|_t&=\af_{t\m}(1_tw(g_1,\dots,g_n))\\
			&=1_{t\m}w(t\m g_1, g_2,\dots,g_n)\\
			&\quad+\sum_{i=1}^{n-1}(-1)^i1_{t\m g_1\dots g_i}w(t\m, g_1,\dots,g_ig_{i+1},\dots,g_n)\\
			&\quad+(-1)^n1_{t\m g_1\dots g_n}w(t\m,g_1,\dots,g_{n-1})\\
			&=1_{t\m}1_{(t\m g_1, g_2,\dots,g_n)}\wtl w(t\m g_1, g_2,\dots,g_n)\\
			&\quad+\sum_{i=1}^{n-1}(-1)^i1_{t\m g_1\dots g_i}1_{(t\m, g_1,\dots,g_ig_{i+1},\dots,g_n)}\wtl w(t\m, g_1,\dots,g_ig_{i+1},\dots,g_n)\\
			&\quad+(-1)^n1_{t\m g_1\dots g_n}1_{(t\m,g_1,\dots,g_{n-1})}\wtl w(t\m,g_1,\dots,g_{n-1})\\
			&=1_{(t\m,g_1,\dots,g_n)}u(g_1,\dots,g_n)|_t\\
			&=\f(1_{(g_1,\dots,g_n)})|_tu(g_1,\dots,g_n)|_t,
		\end{align*}
		whence \cref{w-is-restr-of-u}.
		
		We have yet to prove that $u(g_1,\dots,g_n)\in\M(\B)$, i.e. 
		\begin{align}\label{u(g_1...g_n)B-sst-B}
			u(g_1,\dots,g_n)\B,\B u(g_1,\dots,g_n)\sst\B
		\end{align}
		for all $g_1,\dots,g_n\in G$. Since by \cref{cocycle-ident-for-tilde-w,u-def-n>0}
		\begin{align*}  
			1_{t\m}u(g_1,\dots,g_n)|_t&=1_{t\m}\wtl w(t\m g_1, g_2,\dots,g_n)\\ 
			&\quad+\sum_{i=1}^{n-1}(-1)^i1_{t\m}\wtl w(t\m,g_1,\dots,g_i g_{i+1},\dots,g_n)\\
			&\quad+(-1)^n1_{t\m}\wtl w(t\m,g_1,\dots,g_{n-1})\\
			&=\af_{t\m}(1_t\wtl w(g_1,\dots,g_n)),
		\end{align*} 
		it follows that
		\begin{align*}
			u(g_1,\dots,g_n)|_t\f(a)|_t=\af_{t\m}(1_t\wtl w(g_1,\dots,g_n))\af_{t\m}(1_ta)=\f(\wtl w(g_1,\dots,g_n)a)|_t,
		\end{align*} 
		whence 
		\begin{align}\label{u(g_1...g_n)f(A)-sst-f(A)}
			u(g_1,\dots,g_n)\f(\A)\sst\f(\A).
		\end{align}
		Now $u$, being an $n$-cocycle with values in $(\cF,\bt)$, satisfies
		\begin{align*} 
			\bt_{t\m}(u(g_1,\dots,g_n))\f(a)&=u(t\m g_1, g_2,\dots,g_n)\f(a)\\ 
			&\quad+\sum_{i=1}^{n-1}(-1)^iu(t\m,g_1,\dots,g_ig_{i+1},\dots,g_{n+1})\f(a)\\
			&\quad+(-1)^nu(t\m, g_1,\dots,g_{n-1})\f(a),
		\end{align*}
		where the right-hand side is an element of $\f(\A)$ thanks to \cref{u(g_1...g_n)f(A)-sst-f(A)}. Therefore, $\bt_{t\m}(u(g_1,\dots,g_n))\f(\A)\sst\f(\A)$, so, applying $\bt_t$, we obtain $u(g_1,\dots,g_n)\bt_t(\f(\A))\sst\bt_t(\f(\A))$. Similarly, $\bt_t(\f(\A))u(g_1,\dots,g_n)\sst\bt_t(\f(\A))$, proving \cref{u(g_1...g_n)B-sst-B} in view of \cref{B=sum-bt_g(f(A))}.
	\end{proof}
	
	\subsection{The construction of $w'$}\label{sec:w'}
	
	From now on we assume that $\A=\prod_{\lb\in\Lb}\A_\lb$, where each $\A_\lb$ is an indecomposable unital ring, called a \emph{block} of $\A$. Our aim is to show that every $w\in Z^n_{par}(G,\A)$ can be replaced by a more manageable $w'\in Z^n_{par}(G,\A)$ which will be used in the construction of $\wtl w$ satisfying the conditions of \cref{w-glob-iff-exists-tilde-w}.
	
	As in~\cite{DKS}, the identity $1_{\A_\lb}$ of $\A_{\lb}$ will be identified with an (indecomposable) central idempotent of $\A$, the block $\A_\lb$ with the ideal $1_{\A_\lb}\A$ of $\A$, and the canonical projection $\pr_\lb:\A\to\A_\lb$ with the multiplication by $1_{\A_\lb}$ in $\A$. We write $a=\prod_{\lb\in\Lb_1}a_\lb$, where $\Lb_1\sst\Lb$ and $a_\lb\in\A_\lb$ for all $\lb\in\Lb_1$, if
	\begin{align*}
		\pr_\lb(a)=
		\begin{cases}
			a_\lb, & \lb\in\Lb_1,\\
			0_\A,  & \mbox{otherwise}.
		\end{cases}
	\end{align*}
	Thus, each idempotent $e$ of $\A$ is central and is of the form $\prod_{\lb\in\Lb_1}1_{\A_\lb}$, so that $e\A=\prod_{\lb\in\Lb_1}\A_\lb$. Moreover, an isomorphism between two unital ideals $e\A$ and $f\A$ maps a block of $e\A$ onto a block of $f\A$ (see~\cite[Lemma~3.1]{DKS}).
	
	A unital partial action $\af$ of a group $G$ on $\A$ is called \emph{transitive}, if for all $\lb',\lb''\in\Lb$ there exists $x\in G$, such that $\A_{\lb'}\sst\cD_{x\m}$ and $\af_x(\A_{\lb'})=\A_{\lb''}$. As in~\cite{DKS}, we fix $\lb_0\in\Lb$ and denote by $H$ the \emph{stabilizer} of the block $\A_{\lb_0}$, i.e. the subgroup
	\begin{align*}
		H=\{x\in G\mid \A_{\lb_0}\sst\cD_{x\m}\mbox{ and }\alpha_x(\A_{\lb_0})=\A_{\lb_0}\}
	\end{align*}
	of $G$. Let $\Lb'$ be a left transversal of $H$ in $G$ containing the identity element $1$ of $G$. Then $\Lb$ can be identified with a subset of $\Lb'$, namely, $\lb_0$ is identified with $1$ and
	\begin{align*}
		\A_g=\alpha_g(\A_1)\mbox{ for }g\in\Lb\sst\Lb'.
	\end{align*}
	Given $x\in G$, denote by $\bar x$ the (unique) element of $\Lb'$, such that $x\in\bar xH$. We shall use the following easy fact throughout the text.
	\begin{lem}[Lemma 5.1 from~\cite{DES2}]\label{Lb-and-Lb'}
		Given $x\in G$ and $g\in\Lb'$, one has
		\begin{enumerate}
			\item $g\in\Lb\iff \A_1\sst\cD_{g\m}$;\label{g-in-Lb-iff-A_1-in-D_g-inv}
			\item if $g\in\Lb$, then $\ol{xg}\in\Lb\iff \A_g\sst\cD_{x\m}$, in which case $\af_x(\A_g)=\A_{\ol{xg}}$.\label{ol-xg-in-Lb-iff-A_g-in-D_x-inv}
		\end{enumerate}
	\end{lem}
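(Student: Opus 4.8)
The plan is to realise the classical orbit--stabilizer correspondence inside a partial action: the blocks of $\A$ are parametrised by a subset of the left cosets $G/H$, and the lemma is exactly the bookkeeping that records, for a representative $g$, whether $\af_g$ ``reaches'' a block and which one. The two tools I would use throughout are (a) the second (composition) axiom of a partial action, bearing in mind it is one-directional, so each time I need ``$\af_x\circ\af_g$ is defined on a given set'' I must supply the relevant domain inclusions by hand, together with the standard identity $\af_g(\cD_{g\m}\cap\cD_h)=\cD_g\cap\cD_{gh}$; and (b) the all-or-nothing principle: since each $\cD_x=1_x\A$ is a product of blocks, any block $\A_\mu$ satisfies either $\A_\mu\sst\cD_x$ or $\A_\mu\cap\cD_x=0$. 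As a preliminary step I would record that, for $z\in G$ and $h\in H$, membership of the block $\A_1=\A_{\lambda_0}$ in a domain is insensitive to a right $H$-factor in the index, i.e. $\A_1\sst\cD_{z\m}\iff\A_1\sst\cD_{(zh)\m}$; this follows from the standard identity applied to $\af_h$, using $\A_1\sst\cD_h$ (which holds because $H$ is a subgroup) and $\af_h(\A_1)=\A_1$.

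For part (i), the identification $\Lb\sst\Lb'$ is the one carried by the map $g\mapsto\af_g(\A_1)$, which is defined precisely on $\{g\in\Lb':\A_1\sst\cD_{g\m}\}$ and there returns a block; I would prove this map is a bijection onto the set of blocks of $\A$, which is the assertion, with $\A_g=\af_g(\A_1)$ as a byproduct. Surjectivity comes from transitivity: every block is $\af_x(\A_{\lambda_0})$ with $\A_{\lambda_0}\sst\cD_{x\m}$, and passing to the representative $\bar x\in\Lb'$ (write $x=\bar xh$ with $h\in H$) and applying the preliminary step together with the composition axiom (to factor $\af_x=\af_{\bar x}\circ\af_h$ on $\A_1$) identifies this block with $\af_{\bar x}(\A_1)$. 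Injectivity: if $\af_g(\A_1)=\af_{g'}(\A_1)$ with $g,g'\in\Lb'$, then the composition axiom applied to $\af_{g'\m}\circ\af_g$ on $\A_1$ (legitimate since $\af_g(\A_1)\sst\cD_{g'}$) shows $\af_{g'\m g}(\A_1)=\A_1$, hence $g'\m g\in H$ and so $g=g'$.

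For part (ii), with $g\in\Lb$, I would first derive the identity
\[
\A_g\cap\cD_{x\m}=\af_g\bigl(\A_1\cap\cD_{(xg)\m}\bigr),
\]
by writing $\A_g=\af_g(\A_1)$, using that $\af_g$ is a bijection $\cD_{g\m}\to\cD_g$ with $\A_1\sst\cD_{g\m}$, and applying $\af_{g\m}(\cD_g\cap\cD_{x\m})=\cD_{g\m}\cap\cD_{(xg)\m}$. Since $\A_1$ and $\A_g$ are single blocks, both sides are all-or-nothing, so $\A_g\sst\cD_{x\m}\iff\A_1\sst\cD_{(xg)\m}$; and since $xg$ and $\ol{xg}$ differ by a right $H$-factor, the preliminary step gives $\A_1\sst\cD_{(xg)\m}\iff\A_1\sst\cD_{(\ol{xg})\m}$, which by part (i) is $\ol{xg}\in\Lb$. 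When these conditions hold, $\A_1\sst\cD_{g\m}$, $\af_g(\A_1)=\A_g\sst\cD_{x\m}$ and $\A_1\sst\cD_{(xg)\m}$, so the composition axiom gives $\af_x(\A_g)=\af_x(\af_g(\A_1))=\af_{xg}(\A_1)$, and one more application (factoring $xg=\ol{xg}\cdot h$ through $H$ and using $\af_h(\A_1)=\A_1$) yields $\af_{xg}(\A_1)=\af_{\ol{xg}}(\A_1)=\A_{\ol{xg}}$.

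The hard part, as already flagged, is the one-directionality of the composition axiom: every ``the composite is defined'' assertion has to be earned by producing domain inclusions, and it is the identity $\af_g(\cD_{g\m}\cap\cD_h)=\cD_g\cap\cD_{gh}$ together with the all-or-nothing principle for block ideals that makes this bookkeeping close up. In practice one may of course simply invoke \cite[Lemma~5.1]{DES2}.
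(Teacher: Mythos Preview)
Your argument is correct. The paper does not give a proof of this lemma at all; it merely quotes it as \cite[Lemma~5.1]{DES2}, as you yourself note at the end. Your proposal is thus a self-contained reconstruction of that result, and every step checks out: the preliminary $H$-invariance of ``$\A_1\sst\cD_{z\m}$'' follows from $\af_{h\m}(\cD_h\cap\cD_{z\m})=\cD_{h\m}\cap\cD_{(zh)\m}$ together with $\af_{h\m}(\A_1)=\A_1$; part~(i) is precisely the bijectivity of $g\mapsto\af_g(\A_1)$ on the indicated domain; and in part~(ii) the key identity $\A_g\cap\cD_{x\m}=\af_g(\A_1\cap\cD_{(xg)\m})$ combined with the all-or-nothing principle and your preliminary step yields the equivalence, after which the composition axiom (used in the correct direction, with the domain inclusions you supply) gives $\af_x(\A_g)=\A_{\ol{xg}}$.
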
 
	It follows that
	\begin{align}\label{A_g-sst-D_x-iff-A_1-sst-D_g-inv-x}
		\A_g\sst\cD_x\iff\ol{x\m g}\in\Lb\iff\A_1\sst\cD_{g\m x}.
	\end{align}
	In particular, 
	\begin{align}\label{A_(bar-x)-sst-D_x}
		\A_{\ol{x}}\sst\cD_x
	\end{align}
	for all $x\in G$, such that $\ol{x}\in\Lb$.
	
	As in~\cite{DKS}, the definition of $w'$ will involve the homomorphism $\0_g:\A\to\A_g$ given by
	\begin{align}\label{0_g-def}
		\0_g(a)=\af_g(\pr_1(a))=\pr_g(\af_g(1_{g\m}a)),
	\end{align}
	where $g\in\Lb$ and $a\in\A$. It follows that 
	\begin{align}\label{a=prod-0_g-circ-af_g-inv}
		a=\prod_{g\in\Lb}\0_g(\alpha_{g\m}(1_ga))
	\end{align}
	(see formula~(31) from~\cite{DKS}). Another fact that we shall use:
	\begin{align}\label{0_g(a)=0_g(1_xa)}
		\0_g(a)=\0_g(1_xa) 
	\end{align}
	for any $x\in G$, such that $\A_1\sst\cD_x$. In particular, this holds for $x\in H$ and for $x=g\m$.
	
	\begin{lem}\label{w=prod-0_g}
		Let $n>0$ and $w\in Z_{par}^n(G,\A)$. Then
		\begin{align}
			w(x_1,\dots,x_n)&=1_{(x_1,\dots,x_n)}\prod_{g\in\Lb}\0_g[w(g\m x_1,x_2,\dots,x_n)\notag\\
			&\quad +\sum_{k=1}^{n-1}(-1)^k w(g\m,x_1,\dots,x_kx_{k+1},\dots,x_n)\notag\\
			&\quad +(-1)^n w(g\m,x_1,\dots,x_{n-1})].\label{w=prod_g-0_g-of-w}
		\end{align}
	\end{lem}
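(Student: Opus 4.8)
The plan is to combine the reconstruction formula \cref{a=prod-0_g-circ-af_g-inv} with the cocycle identity $\dl^n w=0$. Applying \cref{a=prod-0_g-circ-af_g-inv} to the element $a=w(x_1,\dots,x_n)\in\A$ gives
\[
w(x_1,\dots,x_n)=\prod_{g\in\Lb}\0_g\bigl(\af_{g\m}(1_g\,w(x_1,\dots,x_n))\bigr),
\]
so it remains to rewrite each inner argument $\af_{g\m}(1_g\,w(x_1,\dots,x_n))$. Evaluating the identity $(\dl^n w)(y_1,\dots,y_{n+1})=0$ from \cref{(dl^nf)(g_1...g_(n+1))} at $(y_1,\dots,y_{n+1})=(g\m,x_1,\dots,x_n)$ and isolating the first summand yields
\begin{align*}
\af_{g\m}(1_g\,w(x_1,\dots,x_n))&=1_{g\m}w(g\m x_1,x_2,\dots,x_n)\\
&\quad+\sum_{k=1}^{n-1}(-1)^k 1_{g\m x_1\dots x_k}\,w(g\m,x_1,\dots,x_kx_{k+1},\dots,x_n)\\
&\quad+(-1)^n 1_{g\m x_1\dots x_n}\,w(g\m,x_1,\dots,x_{n-1}).
\end{align*}
Since $w\in C_{par}^n(G,\A)$ we have $w(x_1,\dots,x_n)=1_{(x_1,\dots,x_n)}w(x_1,\dots,x_n)$; substituting the last display into the reconstruction formula and applying the additive maps $\0_g$, the claim \cref{w=prod_g-0_g-of-w} will follow once all idempotents inside the bracket have been erased.

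The idempotent $1_{g\m}$ drops out immediately: $\0_g(1_{g\m}a)=\0_g(a)$ for every $a\in\A$ by \cref{0_g(a)=0_g(1_xa)} with $x=g\m$ (indeed $\A_1\sst\cD_{g\m}$ because $g\in\Lb$). For the idempotents $1_{g\m x_1\dots x_k}$, $1\le k\le n$, I would establish the auxiliary identity
\[
1_{(x_1,\dots,x_n)}\,\0_g\bigl(1_{g\m x_1\dots x_k}\,a\bigr)=1_{(x_1,\dots,x_n)}\,\0_g(a)\qquad(g\in\Lb,\ a\in\A),
\]
by a dichotomy on the block $\A_g$. If $1_{\A_g}\le 1_{(x_1,\dots,x_n)}$ — equivalently $\A_g\sst\cD_{x_1\dots x_k}$ for every $k$ — then \cref{A_g-sst-D_x-iff-A_1-sst-D_g-inv-x} gives $\A_1\sst\cD_{g\m x_1\dots x_k}$, so \cref{0_g(a)=0_g(1_xa)} with $x=g\m x_1\dots x_k$ removes the idempotent outright. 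Otherwise $1_{\A_g}1_{(x_1,\dots,x_n)}$ is a central idempotent of the indecomposable ring $\A_g$ distinct from $1_{\A_g}$, hence equals $0$; since every value of $\0_g$ lies in $\A_g$, both sides of the auxiliary identity then vanish after multiplication by $1_{(x_1,\dots,x_n)}$. Applying these observations termwise and using the additivity of each $\0_g$ to recollect $\0_g$ inside a single bracket turns the expression into exactly \cref{w=prod_g-0_g-of-w}.

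The genuinely delicate point is this last manipulation of idempotents: the $1_{g\m x_1\dots x_k}$ cannot be dropped unconditionally inside $\0_g$, and the leading factor $1_{(x_1,\dots,x_n)}$ in the statement is precisely what is needed to absorb the discrepancy, via the product decomposition $\A=\prod_{\lb\in\Lb}\A_\lb$ and the indecomposability of each block. The remainder is the routine substitution of the cocycle identity into \cref{a=prod-0_g-circ-af_g-inv} and the $K$-linearity of the homomorphisms $\0_g$.
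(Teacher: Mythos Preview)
Your proof is correct and follows the same overall strategy as the paper: apply the reconstruction formula \cref{a=prod-0_g-circ-af_g-inv} and then rewrite $\af_{g\m}(1_g w(x_1,\dots,x_n))$ via the cocycle identity $(\dl^n w)(g\m,x_1,\dots,x_n)=0$. The only divergence is in how the inner idempotents are eliminated. The paper does not argue by a block-by-block dichotomy; instead it uses $w\in C^n_{par}(G,\A)$ to observe that every term in the cocycle expansion already carries the \emph{single} idempotent $1_{(g\m,x_1,\dots,x_n)}$ (since, e.g., $1_{g\m x_1\dots x_k}\cdot 1_{(g\m,x_1,\dots,x_kx_{k+1},\dots,x_n)}=1_{(g\m,x_1,\dots,x_n)}$), and then computes directly $\0_g(1_{(g\m,x_1,\dots,x_n)})=\pr_g(1_{(x_1,\dots,x_n)})$, whence $\prod_{g\in\Lb}\0_g(1_{(g\m,x_1,\dots,x_n)})=1_{(x_1,\dots,x_n)}$. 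This is shorter and, notably, does not invoke the indecomposability of the blocks at all, whereas your dichotomy does. Your argument is nonetheless valid, and the two routes yield the same conclusion.
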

	\begin{proof}
		By \cref{a=prod-0_g-circ-af_g-inv,(dl^nf)(g_1...g_(n+1))}
		\begin{align*}
			w(x_1,\dots,x_n)&=\prod_{g\in\Lb}\0_g(\alpha_{g\m}(1_gw(x_1,\dots,x_n)))\\
			&=\prod_{g\in\Lb}\0_g[1_{g\m}w(g\m x_1,x_2,\dots,x_n)\\
			&\quad +\sum_{k=1}^{n-1}(-1)^k 1_{g\m x_1\dots x_k}w(g\m,x_1,\dots,x_kx_{k+1},\dots,x_n)\\
			&\quad +(-1)^n 1_{g\m x_1\dots x_n}w(g\m,x_1,\dots,x_{n-1})]\\
			&=\prod_{g\in\Lb}\0_g[1_{(g\m,x_1,\dots,x_n)}(w(g\m x_1,x_2,\dots,x_n)\\
			&\quad +\sum_{k=1}^{n-1}(-1)^k w(g\m,x_1,\dots,x_kx_{k+1},\dots,x_n)\\
			&\quad +(-1)^n w(g\m,x_1,\dots,x_{n-1}))].
		\end{align*}
		It remains to observe by \cref{0_g-def,A_(bar-x)-sst-D_x} that
		\begin{align*}
			\0_g(1_{(g\m,x_1,\dots,x_n)})=\pr_g(\af_g(1_{g\m}1_{(g\m,x_1,\dots,x_n)}))=\pr_g(1_g1_{(x_1,\dots,x_n)})=\pr_g(1_{(x_1,\dots,x_n)}),
		\end{align*}
		so that 
		\begin{align}\label{prod-0_g(1_(g-inv_x_1...x_n))}
			\prod_{g\in\Lb}\0_g(1_{(g\m,x_1,\dots,x_n)})=\prod_{g\in\Lb}\pr_g(1_{(x_1,\dots,x_n)})=1_{(x_1,\dots,x_n)}.
		\end{align}
	\end{proof}
	
	We recall here the notations from~\cite{DKS}. We denote by $\eta$ the map $G\to H$ which sends $x\in G$ to $x\m\bar x\in H$. For all $n>0$ and $g\in\Lb'$ we define $\eta_n^g:G^n\to H$ by
	\begin{align}\label{eta_n^g-def}
		\eta_n^g(x_1,\dots,x_n)=\eta(x\m_n \ol{x\m_{n-1}\dots x\m_1 g})
	\end{align}
	and $\tau_n^g:G^n\to H^n$ by
	\begin{align}\label{tau_n^g-def}
		\tau_n^g(x_1,\dots,x_n)=(\eta_1^g(x_1),\eta_2^g(x_1,x_2),\dots,\eta_n^g(x_1,\dots,x_n)). 
	\end{align}
	We notice here that 
	\begin{align}\label{prod-of-etas}
		\eta_1^g(x_1)\eta_2^g(x_1,x_2)\dots\eta_n^g(x_1,\dots,x_n)=\eta(x\m_n\dots x\m_1g)=\eta_1^g(x_1\dots x_n).
	\end{align}
	Furthermore, the functions $\s_{n,i}^g:G^n\to G^{n+1}$, $n>0$, $0\le i\le n$, will be defined by
	\begin{align}
		\s_{n,0}^g(x_1,\dots,x_n)&=(g\m,x_1,\dots,x_n),\label{sigma_n0-def}\\
		\s_{n,i}^g(x_1,\dots,x_n)&=(\tau_i^g(x_1,\dots,x_i),(\ol{x\m_i\dots x\m_1 g})\m,x_{i+1},\dots,x_n),\ \ 0<i<n,\label{sigma_ni-def}\\
		\s_{n,n}^g(x_1,\dots,x_n)&=(\tau_{n}^g(x_1,\dots,x_{n}),(\ol{x\m_n\dots x\m_1 g})\m).\label{sigma_nn-def}
	\end{align}
	If $n=0$, then we set 
	\begin{align}\label{sigma_00-def}
		\s_{0,0}^g=g\m\in G.
	\end{align}
	
	\begin{defn}\label{w'-and-eps-defn}
		Given $n>0$ and $w\in C_{par}^n(G,\A)$, define $w'\in C^n_{par}(G,\A)$ and $\ve\in C^{n-1}_{par}(G,\A)$ by
		\begin{align}
			w'(x_1,\dots,x_n)&=1_{(x_1,\dots,x_n)}\prod_{g\in\Lb}\0_g\circ w\circ\tau_n^g(x_1,\dots,x_n),\label{w'-def}\\
			\ve(x_1,\dots,x_{n-1})&=1_{(x_1,\dots,x_{n-1})}\prod_{g\in\Lb}\0_g\left(\sum_{i=0}^{n-1}(-1)^i w\circ\s_{n-1,i}^g(x_1,\dots,x_{n-1})\right).\label{eps-def}
		\end{align}
		When $n=1$, equality \cref{eps-def} should be understood as 
		\begin{align}\label{eps-n=0-def}
			\ve=\prod_{g\in\Lb}\0_g(w(g\m))\in\A.
		\end{align}
	\end{defn}
	
	We introduce here an additive analogue of the notation used in~\cite{DKS}:
	\begin{align}
		\Sg(l,m)&=\sum_{k=l,i=m}^{n-1}(-1)^{k+i} w\circ\s_{n-1,i}^g(x_1,\dots,x_kx_{k+1},\dots,x_n)\notag\\
		&\quad+\sum_{i=m}^{n-1}(-1)^{n+i} w\circ\s_{n-1,i}^g(x_1,\dots,x_{n-1}),\label{Sg(l_m)-def}
	\end{align}
	where $1\le l\le n-1$ and $0\le m\le n-1$ ($n$ is assumed to be fixed). 
	
	\begin{lem}\label{w'-cohom-w-base-0}
		For all $w\in Z_{par}^1(G,\A)$ and  $x\in G$ we have:
		\begin{align}\label{delta-e-alpha-inv-w-inv-n=1}
			(\dl^0\ve)(x)-\af_x(1_{x\m}\ve)-w(x)=1_x\prod_{g\in\Lb}\0_g(-w(g\m x)).  
		\end{align}
		Moreover, for $n>1$, $w\in Z^n(G,\A)$ and $x_1,\dots,x_n\in G$:
		\begin{align}
			&(\dl^{n-1}\ve)(x_1,\dots,x_n)-\af_{x_1}(1_{x\m_1}\ve(x_2,\dots,x_n))-w(x_1,\dots,x_n)\notag\\
			&=1_{(x_1,\dots,x_n)}\prod_{g\in\Lb}\0_g(-w(g\m x_1,x_2,\dots,x_n)+\Sg(1,1)).\label{delta-e-alpha-inv-w-inv}
		\end{align}
	\end{lem}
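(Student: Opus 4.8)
The plan is to prove both identities by a direct computation that, once the coboundary operators are unwound, reduces everything to the expansion \cref{w=prod_g-0_g-of-w} of \cref{w=prod-0_g} and to the defining formulas \cref{eps-n=0-def,eps-def} of $\ve$. The only structural input is the additivity of the assignment $a\mapsto\prod_{g\in\Lb}\0_g(a)$, together with a couple of elementary identities between products of commuting central idempotents.

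\emph{Case $n=1$.} By \cref{(dl^0a)(g)=af_g(1_(g-inv)a)-1_ga} we have $(\dl^0\ve)(x)=\af_x(1_{x\m}\ve)-1_x\ve$, so the left-hand side of \cref{delta-e-alpha-inv-w-inv-n=1} equals $-1_x\ve-w(x)$. I would then substitute $\ve=\prod_{g\in\Lb}\0_g(w(g\m))$ from \cref{eps-n=0-def} and, for $w(x)$, the $n=1$ instance $w(x)=1_x\prod_{g\in\Lb}\0_g(w(g\m x)-w(g\m))$ of \cref{w=prod_g-0_g-of-w}. Using additivity of $a\mapsto\prod_{g\in\Lb}\0_g(a)$, the two occurrences of $1_x\prod_{g\in\Lb}\0_g(w(g\m))$ cancel, and what remains is $-1_x\prod_{g\in\Lb}\0_g(w(g\m x))=1_x\prod_{g\in\Lb}\0_g(-w(g\m x))$, which is the claim.

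\emph{Case $n>1$.} First apply \cref{(dl^nf)(g_1...g_(n+1))}, with $n$ replaced by $n-1$, to $\ve\in C^{n-1}_{par}(G,\A)$. Its leading summand is exactly $\af_{x_1}(1_{x\m_1}\ve(x_2,\dots,x_n))$, hence
\begin{align*}
	(\dl^{n-1}\ve)(x_1,\dots,x_n)-\af_{x_1}(1_{x\m_1}\ve(x_2,\dots,x_n))&=\sum_{i=1}^{n-1}(-1)^i1_{x_1\dots x_i}\ve(x_1,\dots,x_ix_{i+1},\dots,x_n)\\
	&\quad+(-1)^n1_{x_1\dots x_n}\ve(x_1,\dots,x_{n-1}).
\end{align*}
Now I would use the elementary identities $1_{x_1\dots x_i}1_{(x_1,\dots,x_ix_{i+1},\dots,x_n)}=1_{(x_1,\dots,x_n)}$ and $1_{x_1\dots x_n}1_{(x_1,\dots,x_{n-1})}=1_{(x_1,\dots,x_n)}$ together with \cref{eps-def} to pull the common idempotent $1_{(x_1,\dots,x_n)}$ in front and, by additivity of $\prod_{g\in\Lb}\0_g$, to merge the right-hand side into $1_{(x_1,\dots,x_n)}\prod_{g\in\Lb}\0_g$ of a single sum of terms $w\circ\s_{n-1,j}^g(\dots)$, $0\le j\le n-1$. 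Splitting off the $j=0$ summands, which are $w(g\m,\dots)$ by \cref{sigma_n0-def}, and recognizing the $j\ge1$ part — after relabelling the summation indices — as $\Sg(1,1)$ in the sense of \cref{Sg(l_m)-def}, one obtains
\begin{align*}
	(\dl^{n-1}\ve)(x_1,\dots,x_n)-\af_{x_1}(1_{x\m_1}\ve(x_2,\dots,x_n))&=1_{(x_1,\dots,x_n)}\prod_{g\in\Lb}\0_g\Big(\sum_{k=1}^{n-1}(-1)^kw(g\m,x_1,\dots,x_kx_{k+1},\dots,x_n)\\
	&\quad+(-1)^nw(g\m,x_1,\dots,x_{n-1})+\Sg(1,1)\Big).
\end{align*}
Finally, subtracting $w(x_1,\dots,x_n)$ written out via \cref{w=prod_g-0_g-of-w} cancels the two explicit $w(g\m,\dots)$-sums and leaves precisely $1_{(x_1,\dots,x_n)}\prod_{g\in\Lb}\0_g(-w(g\m x_1,x_2,\dots,x_n)+\Sg(1,1))$, i.e. \cref{delta-e-alpha-inv-w-inv}.

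The one place where care is genuinely needed is the bookkeeping in the case $n>1$: one must be sure that distributing $1_{x_1\dots x_i}$ into $\ve(x_1,\dots,x_ix_{i+1},\dots,x_n)$ really restores the \emph{full} idempotent $1_{(x_1,\dots,x_n)}$ (and not a proper subidempotent of it), and that the signs and index ranges are matched correctly when the $j\ge1$ part is identified with $\Sg(1,1)$. Everything else — the merging of the $\prod_{g\in\Lb}\0_g$ expressions, and the final cancellation against \cref{w=prod_g-0_g-of-w} — is then purely mechanical.
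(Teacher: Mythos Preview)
Your proposal is correct and follows essentially the same route as the paper's proof. In both cases one cancels the leading term of $\dl^{n-1}\ve$, substitutes the definition of $\ve$, collapses the idempotent prefactors to $1_{(x_1,\dots,x_n)}$, and then subtracts the expansion \cref{w=prod_g-0_g-of-w}; the paper packages your merged sum as $\Sg(1,0)$ and refers to \cite[Lemma~3.5]{DKS} for the split $\Sg(1,0)=\Sg(1,1)+(\text{$i=0$ terms})$, whereas you perform that split explicitly. One minor remark: no ``relabelling of summation indices'' is needed --- once the $i=0$ terms are removed from $\Sg(1,0)$, the remainder is literally $\Sg(1,1)$ by \cref{Sg(l_m)-def}.
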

	\begin{proof}
		By \cref{(dl^0a)(g)=af_g(1_(g-inv)a)-1_ga,eps-n=0-def,w=prod_g-0_g-of-w} the left-hand side of \cref{delta-e-alpha-inv-w-inv-n=1} equals
		\begin{align*}
			-1_x\ve-w(x)&=-1_x\prod_{g\in\Lb}\0_g(w(g\m))-1_x\prod_{g\in\Lb}\0_g(w(g\m x)-w(g\m)),
		\end{align*}
		proving \cref{delta-e-alpha-inv-w-inv-n=1}.
		
		Now by \cref{(dl^nf)(g_1...g_(n+1)),Sg(l_m)-def,eps-def}
		\begin{align*}
			&(\dl^{n-1}\ve)(x_1,\dots,x_n)-\af_{x_1}(1_{x\m_1}\ve(x_2,\dots,x_n))\\
			&=\sum_{k=1}^{n-1}(-1)^k 1_{x_1\dots x_k}\ve(x_1,\dots,x_kx_{k+1},\dots,x_n)+(-1)^n1_{x_1\dots x_n}\ve(x_1,\dots,x_{n-1})\\
			&=1_{(x_1,\dots,x_n)}\prod_{g\in\Lb}\0_g\left(\sum_{k=1,i=0}^{n-1}(-1)^{k+i} w\circ\s_{n-1,i}^g(x_1,\dots,x_kx_{k+1},\dots,x_n)\right)\\
			&+1_{(x_1,\dots,x_n)}\prod_{g\in\Lb}\0_g\left(\sum_{i=0}^{n-1}(-1)^{n+i} w\circ\s_{n-1,i}^g(x_1,\dots,x_{n-1})\right)\\
			&=1_{(x_1,\dots,x_n)}\prod_{g\in\Lb}\0_g(\Sg(1,0)).
		\end{align*}
		As in the proof of~\cite[Lemma~3.5]{DKS}, using \cref{w=prod_g-0_g-of-w} we conclude that the latter is
		\begin{align*}
			w(x_1,\dots,x_n)+1_{(x_1,\dots,x_n)}\prod_{g\in\Lb}\0_g(-w(g\m x_1,x_2,\dots,x_n)+\Sg(1,1)).
		\end{align*}
	\end{proof}
	
	\begin{lem}\label{w'-cohom-w-base-1}
		For all $n>1$, $w\in Z_{par}^n(G,\A)$, $g\in\Lb$ and $x_1,\dots,x_n\in G$:
		\begin{align}
			&1_{\s_{n,1}^g(x_1,\dots,x_n)}(-w(g\m x_1,x_2,\dots,x_n)+\Sg(1,1))\notag\\
			&\quad=-\alpha_{\eta_1^g(x_1)}(1_{\eta_1^g(x_1)\m}w\circ\s_{n-1,0}^{\ol{x\m_1 g}}(x_2,\dots,x_n))\notag\\
			&\quad\quad +1_{\s_{n,1}^g(x_1,\dots,x_n)}(-w(\tau_1^g(x_1),(\ol{x\m_1 g})\m x_2,x_3,\dots,x_n)+\Sg(2,2)\notag\\
			&\quad\quad+\sum_{i=1}^{n-1}(-1)^{i+1} w\circ\s_{n-1,i}^g(x_1x_2,x_3,\dots,x_n)).\label{-w+Sg(1_1)}
		\end{align}
	\end{lem}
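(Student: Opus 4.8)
The plan is to read off \cref{-w+Sg(1_1)} from the $n$-cocycle identity for $w$ — which is available since $w\in Z^n_{par}(G,\A)$ means $\dl^n w=0$, with $\dl^n$ given in the partial form \cref{(dl^nf)(g_1...g_(n+1))} — specialized to one carefully chosen $(n+1)$-tuple and then multiplied by the right idempotent. Put $h=\ol{x\m_1 g}$ and $\eta=\eta_1^g(x_1)=\tau_1^g(x_1)$; since $g\in\Lb\sst\Lb'$ we have $\bar g=g$, so by \cref{eta_n^g-def} $\eta=\eta(x\m_1g)=g\m x_1h$, whence $\eta h\m=g\m x_1$. By \cref{sigma_n0-def,sigma_ni-def} the chosen tuple is
\[
\s_{n,1}^g(x_1,\dots,x_n)=(\eta,\,h\m,\,x_2,\dots,x_n),
\]
whose successive partial products are $\eta,\ \eta h\m=g\m x_1,\ g\m x_1x_2,\dots,g\m x_1\cdots x_n$; in particular $1_{\s_{n,1}^g(x_1,\dots,x_n)}$ is exactly the product of all the idempotents $1_{g_1\cdots g_i}$ occurring in the cocycle identity for this tuple, and $\s_{n-1,0}^h(x_2,\dots,x_n)=(h\m,x_2,\dots,x_n)$.

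First I would write out $(\dl^n w)(\eta,h\m,x_2,\dots,x_n)=0$ via \cref{(dl^nf)(g_1...g_(n+1))} and multiply the whole relation by $1_{\s_{n,1}^g(x_1,\dots,x_n)}$. Each summand $(-1)^i 1_{g_1\cdots g_i}w(\cdots)$ then collapses to $(-1)^i 1_{\s_{n,1}^g(x_1,\dots,x_n)}w(\cdots)$ because $1_{g_1\cdots g_i}$ is one of its factors, while the leading term $\af_\eta(1_{\eta\m}w(h\m,x_2,\dots,x_n))$ is unaffected: since $w\in C^n_{par}(G,\A)$ we have $w(h\m,x_2,\dots,x_n)\in 1_{(h\m,x_2,\dots,x_n)}\A$, and the standard partial-action identity $\af_\eta(1_{\eta\m}1_y a)=1_\eta 1_{\eta y}\af_\eta(1_{\eta\m}a)$ (used already in the proof of \cref{w=prod-0_g}) carries $1_{(h\m,x_2,\dots,x_n)}$ through $\af_\eta$ into $1_{(\eta h\m,x_2,\dots,x_n)}=1_{(g\m x_1,x_2,\dots,x_n)}$, a factor of $1_{\s_{n,1}^g(x_1,\dots,x_n)}$. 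This idempotent bookkeeping is the one step that genuinely requires care.

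Next I would identify the surviving terms. The $i=1$ term is $-1_{\s_{n,1}^g(x_1,\dots,x_n)}w(g\m x_1,x_2,\dots,x_n)$; the $i=2$ term is $1_{\s_{n,1}^g(x_1,\dots,x_n)}w(\tau_1^g(x_1),(\ol{x\m_1 g})\m x_2,x_3,\dots,x_n)$; and, after the shift $k=i-1$, the terms with $3\le i\le n$ together with the final $(-1)^{n+1}$-term give $1_{\s_{n,1}^g(x_1,\dots,x_n)}$ times
\[
T:=\sum_{k=2}^{n-1}(-1)^{k+1}w\circ\s_{n-1,1}^g(x_1,\dots,x_kx_{k+1},\dots,x_n)+(-1)^{n+1}w\circ\s_{n-1,1}^g(x_1,\dots,x_{n-1}),
\]
using \cref{sigma_ni-def} and the fact that the first coordinate of each of these arguments is still $x_1$. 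After rearranging, the $1_{\s_{n,1}^g(x_1,\dots,x_n)}$-multiple of the cocycle identity reads
\begin{align*}
1_{\s_{n,1}^g(x_1,\dots,x_n)}\bigl(-w(g\m x_1,x_2,\dots,x_n)+T\bigr)&=-\af_\eta\bigl(1_{\eta\m}w\circ\s_{n-1,0}^h(x_2,\dots,x_n)\bigr)\\
&\quad-1_{\s_{n,1}^g(x_1,\dots,x_n)}w(\tau_1^g(x_1),(\ol{x\m_1 g})\m x_2,x_3,\dots,x_n).
\end{align*}

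Finally I would match this against the right-hand side of \cref{-w+Sg(1_1)} through the definition \cref{Sg(l_m)-def} of $\Sg$. Peeling off the full $i=1$ summand of $\Sg(1,1)$ and then separating the $k=1$ slice inside the remainder yields the purely combinatorial identity $\Sg(1,1)=T+\Sg(2,2)+\sum_{i=1}^{n-1}(-1)^{i+1}w\circ\s_{n-1,i}^g(x_1x_2,x_3,\dots,x_n)$, where the $i=1$ term of that last sum is precisely the $w\circ\s_{n-1,1}^g(x_1x_2,x_3,\dots,x_n)$ coming from the $k=1$ term of the $i=1$ summand. Substituting the displayed cocycle relation into $1_{\s_{n,1}^g(x_1,\dots,x_n)}\bigl(-w(g\m x_1,x_2,\dots,x_n)+\Sg(1,1)\bigr)$ and using this decomposition then produces exactly \cref{-w+Sg(1_1)}; the case $n=2$, where $\Sg(2,2)=0$ and $\s_{1,1}^g$ is governed by \cref{sigma_nn-def}, is covered verbatim. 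The whole argument parallels, in the additive setting, the corresponding multiplicative step in~\cite{DKS}; beyond the idempotent computation of the second paragraph, the only thing to watch is keeping the re-indexing of the $\Sg$-sums consistent.
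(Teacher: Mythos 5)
Your proposal is correct and follows essentially the same route as the paper: you evaluate the cocycle identity $(\dl^n w)\circ\s_{n,1}^g(x_1,\dots,x_n)=0$, multiply by $1_{\s_{n,1}^g(x_1,\dots,x_n)}$, and match terms via the decomposition $\Sg(1,1)=T+\Sg(2,2)+\sum_{i=1}^{n-1}(-1)^{i+1}w\circ\s_{n-1,i}^g(x_1x_2,\dots,x_n)$, exactly as in the paper (which just performs the rearrangement before the multiplication and defers the bookkeeping to the multiplicative case in \cite{DKS}). Your explicit justification that the leading $\af_{\eta_1^g(x_1)}$-term absorbs the idempotent is a detail the paper leaves implicit, and it is handled correctly.
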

	\begin{proof}
		By \cref{eta_n^g-def,tau_n^g-def,sigma_ni-def,(dl^nf)(g_1...g_n)=...}
		\begin{align*}
			0&=(\dl^nw)\circ\s_{n,1}^g(x_1,\dots,x_n)\\
			&=(\dl^nw)(g\m x_1\cdot\ol{x\m_1 g},(\ol{x\m_1 g})\m,x_2,\dots,x_n)\\
			&=\alpha_{g\m x_1\cdot\ol{x\m_1 g}}(1_{{(\ol{x\m_1 g})\m x\m_1 g}}w((\ol{x\m_1 g})\m,x_2,\dots,x_n))\\
			&\quad-1_{g\m x_1\cdot\ol{x\m_1 g}}w(g\m x_1,x_2,\dots,x_n)\\
			&\quad+1_{g\m x_1}w(g\m x_1\cdot\ol{x\m_1 g},(\ol{x\m_1 g})\m x_2,x_3,\dots,x_n)\\
			&\quad+\sum_{k=2}^{n-1} (-1)^{k+1} 1_{g\m x_1\dots x_k} w(g\m x_1\cdot\ol{x\m_1 g},(\ol{x\m_1 g})\m,x_2,\dots,x_kx_{k+1},\dots x_n)\\
			&\quad +(-1)^{n+1} 1_{g\m x_1\dots x_n} w(g\m x_1\cdot\ol{x\m_1 g},(\ol{x\m_1 g})\m,x_2,\dots,x_{n-1}).
		\end{align*}
		Therefore,
		\begin{align*}
			-1_{\eta_1^g(x_1)}w(g\m x_1,x_2,\dots,x_n)&=-\alpha_{\eta_1^g(x_1)}(1_{\eta_1^g(x_1)\m}w\circ\s_{n-1,0}^{\ol{x\m_1 g}}(x_2,\dots,x_n))\\
			&\quad -1_{g\m x_1}w(\tau_1^g(x_1),(\ol{x\m_1 g})\m x_2,x_3,\dots,x_n)\\
			&\quad +\sum_{k=2}^{n-1} (-1)^k 1_{g\m x_1\dots x_k} w\circ\s_{n-1,1}^g(x_1,\dots,x_kx_{k+1},\dots x_n)\\
			&\quad +(-1)^n 1_{g\m x_1\dots x_n} w\circ\s_{n-1,1}^g(x_1,\dots,x_{n-1}).
		\end{align*}
		Adding $\Sg(1,1)$ and then multiplying both sides of the obtained equality by $1_{\s_{n,1}^g(x_1,\dots,x_n)}=1_{\eta_1^g(x_1)}1_{(g\m x_1,x_2,\dots,x_n)}$, we get \cref{-w+Sg(1_1)} (for more details in the multiplicative case see the proof of~\cite[Lemma~3.6]{DKS}).
	\end{proof}
	
	\begin{lem}\label{w'-cohom-w-step}
		For all $1<j<n$, $w\in Z_{par}^n(G,\A)$, $g\in\Lb$ and $x_1,\dots,x_n\in G$:
		\begin{align}
			&1_{\s_{n,j}^g(x_1,\dots,x_n)}(-w(\tau_{j-1}^g(x_1,\dots,x_{j-1}),(\ol{x\m_{j-1}\dots x\m_1 g})\m x_j,x_{j+1},\dots,x_n)+\Sg(j,j))\notag\\
			&\quad=(-1)^j\af_{\eta_1^g(x_1)}(1_{\eta_1^g(x_1)\m}w\circ\s_{n-1,j-1}^{\ol{x\m_1 g}}(x_2,\dots,x_n))\notag\\
			&\quad\quad +1_{\s_{n,j}^g(x_1,\dots,x_n)}(-w(\tau_j^g(x_1,\dots,x_j),(\ol{x\m_j\dots x\m_1 g})\m x_{j+1},x_{j+2},\dots,x_n)+\Sg(j+1,j+1)\notag\\
			&\quad\quad +\sum_{i=j}^{n-1} (-1)^{i+j} w\circ\s_{n-1,i}^g(x_1,\dots,x_jx_{j+1},\dots,x_n)\notag\\
			&\quad\quad +\sum_{s=1}^{j-1} (-1)^{s+j} w\circ\s_{n-1,j-1}^g(x_1,\dots,x_sx_{s+1},\dots,x_n))\label{w-Sg(j_j)}
		\end{align}
		(here by $\Sg(n,n)$ we mean $0_\A$).
	\end{lem}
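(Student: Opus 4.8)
The plan is to imitate the proof of \cref{w'-cohom-w-base-1}, which is the case $j=1$ of the present statement and which is itself the additive counterpart of~\cite[Lemma~3.7]{DKS}. The starting point is the cocycle identity $(\dl^n w)\circ\s_{n,j}^g(x_1,\dots,x_n)=0$, which I would expand by~\cref{(dl^nf)(g_1...g_(n+1))} applied to the $(n+1)$-tuple
\[
\s_{n,j}^g(x_1,\dots,x_n)=\big(\eta_1^g(x_1),\dots,\eta_j^g(x_1,\dots,x_j),(\ol{x\m_j\dots x\m_1 g})\m,x_{j+1},\dots,x_n\big).
\]
By~\cref{prod-of-etas} its $i$-th partial product equals $\eta(x\m_i\dots x\m_1 g)$ for $1\le i\le j$ and $g\m x_1\dots x_{i-1}$ for $j<i\le n+1$; consequently $1_{\s_{n,j}^g(x_1,\dots,x_n)}$ is the product of all the idempotents $1_{\eta(x\m_i\dots x\m_1 g)}$ ($1\le i\le j$) and $1_{g\m x_1\dots x_i}$ ($j\le i\le n$), and every idempotent occurring below as a prefactor will be one of these, hence a central divisor of $1_{\s_{n,j}^g(x_1,\dots,x_n)}$.

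Next I would identify each summand of the expansion, using the elementary identities relating the $\eta_n^g$'s and $\tau_n^g$'s from~\cite{DKS}: the shift relation $\eta_{k+1}^g(x_1,\dots,x_{k+1})=\eta_k^{\ol{x\m_1 g}}(x_2,\dots,x_{k+1})$ (with $\ol{x\m_j\dots x\m_1 g}=\ol{x\m_j\dots x\m_2\,\ol{x\m_1 g}}$), the ``merge'' relation $\eta_i^g(x_1,\dots,x_i)\,\eta_{i+1}^g(x_1,\dots,x_{i+1})=\eta_i^g(x_1,\dots,x_ix_{i+1})$, and $\eta_i^g(x_1,\dots,x_i)(\ol{x\m_i\dots x\m_1 g})\m=(\ol{x\m_{i-1}\dots x\m_1 g})\m x_i$. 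One obtains: the leading term is $\af_{\eta_1^g(x_1)}\!\big(1_{\eta_1^g(x_1)\m}\,w\circ\s_{n-1,j-1}^{\ol{x\m_1 g}}(x_2,\dots,x_n)\big)$, whose tuple of arguments is precisely the tail $(\eta_2^g(x_1,x_2),\dots)$ of $\s_{n,j}^g(x_1,\dots,x_n)$, so this term already lies in $1_{\s_{n,j}^g(x_1,\dots,x_n)}\A$; the terms with $1\le i\le j-1$ equal $(-1)^i$ times $1_{\eta(x\m_i\dots x\m_1 g)}\,w\circ\s_{n-1,j-1}^g(x_1,\dots,x_ix_{i+1},\dots,x_n)$; the $i=j$ term equals $(-1)^j\,1_{\eta(x\m_j\dots x\m_1 g)}$ times exactly the $w$-value occurring on the left-hand side of~\cref{w-Sg(j_j)}; the $i=j+1$ term equals $(-1)^{j+1}\,1_{g\m x_1\dots x_j}$ times the ``next level'' $w$-value $w(\tau_j^g(x_1,\dots,x_j),(\ol{x\m_j\dots x\m_1 g})\m x_{j+1},x_{j+2},\dots,x_n)$; and the remaining terms together with the final one have the form $(-1)^{k+1}\,1_{g\m x_1\dots x_k}\,w\circ\s_{n-1,j}^g(x_1,\dots,x_kx_{k+1},\dots,x_n)$ ($j+1\le k\le n-1$) and $(-1)^{n+1}\,1_{g\m x_1\dots x_n}\,w\circ\s_{n-1,j}^g(x_1,\dots,x_{n-1})$.

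Now I would solve the expanded identity for the $i=j$ term, negate, and multiply through by $1_{\s_{n,j}^g(x_1,\dots,x_n)}$: this absorbs every idempotent prefactor into $1_{\s_{n,j}^g(x_1,\dots,x_n)}$, leaves the leading term unchanged (it is already in $1_{\s_{n,j}^g(x_1,\dots,x_n)}\A$) and puts the coefficient $(-1)^j$ on it, which is the first summand on the right-hand side of~\cref{w-Sg(j_j)}. Adding $1_{\s_{n,j}^g(x_1,\dots,x_n)}\Sg(j,j)$ to both sides and using the elementary splitting
\[
\Sg(j,j)=\Sg(j+1,j+1)+\sum_{i=j}^{n-1}(-1)^{i+j}w\circ\s_{n-1,i}^g(x_1,\dots,x_jx_{j+1},\dots,x_n)+\big(\text{the }\s_{n-1,j}^g\text{-slice}\big),
\]
I would check that the ``$\s_{n-1,j}^g$-slice'' cancels term-by-term against the $\s_{n-1,j}^g$-terms isolated in the previous paragraph, so that what remains is exactly the right-hand side of~\cref{w-Sg(j_j)}, the $i\le j-1$ terms combining into $\sum_{s=1}^{j-1}(-1)^{s+j}w\circ\s_{n-1,j-1}^g(x_1,\dots,x_sx_{s+1},\dots,x_n)$. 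For the multiplicative prototype of all these manipulations one consults the proof of~\cite[Lemma~3.7]{DKS}.

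The step I expect to be the main obstacle is the combinatorial bookkeeping: keeping track of the signs and of which coordinate pair is contracted at each position when $\dl^n$ acts on a tuple whose first $j+1$ entries are built out of the $\eta$'s, and then verifying precisely both the cancellation of the unwanted $\s_{n-1,j}^g$-contributions after adding $\Sg(j,j)$ and the fact that multiplication by $1_{\s_{n,j}^g(x_1,\dots,x_n)}$ absorbs every prefactor. The non-commutativity of $\A$ causes no trouble here, since all the idempotents in play are central.
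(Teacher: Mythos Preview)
Your proposal is correct and follows essentially the same route as the paper's proof: expand the cocycle identity $(\dl^n w)\circ\s_{n,j}^g(x_1,\dots,x_n)=0$ via \cref{(dl^nf)(g_1...g_(n+1))}, identify the summands using the $\eta$/$\tau$ identities, isolate the $i=j$ term, add $\Sg(j,j)$ and multiply by $1_{\s_{n,j}^g(x_1,\dots,x_n)}$. The paper performs the last two steps in the opposite order (add $\Sg(j,j)$, then multiply by the idempotent), but this is immaterial; your explicit splitting of $\Sg(j,j)$ into $\Sg(j+1,j+1)$, the $k=j$ row, and the $\s_{n-1,j}^g$-slice, together with the observed cancellation of that slice against the tail of the expanded cocycle identity, is exactly the bookkeeping that the paper leaves to the reader by pointing to~\cite[Lemma~3.7]{DKS}.
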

	\begin{proof}
		Our argument is analogous to that of the proof of \cref{w'-cohom-w-base-1} (for some technical details see also the proof of \cite[Lemma~3.7]{DKS}):
		\begin{align*}
			0&=(\dl^nw)\circ\s_{n,j}^g(x_1,\dots,x_n)\\
			&=\af_{\eta_1^g(x_1)}(1_{\eta_1^g(x_1)\m}w\circ\s_{n-1,j-1}^{\ol{x\m_1 g}}(x_2,\dots,x_n))\\
			&\quad+\sum_{s=1}^{j-1} (-1)^s 1_{\eta_1^g(x_1\dots x_s)}w\circ\s_{n-1,j-1}^g(x_1,\dots,x_sx_{s+1},\dots,x_n)\\
			&\quad +(-1)^j 1_{\eta_1^g(x_1\dots x_j)} w(\tau_{j-1}^g(x_1,\dots,x_{j-1}),(\ol{x\m_{j-1}\dots x\m_1g})\m x_j,x_{j+1},\dots,x_n)\\
			&\quad +(-1)^{j+1} 1_{g\m x_1\dots x_j} w(\tau_j^g(x_1,\dots,x_j),(\ol{x\m_j\dots x\m_1g})\m x_{j+1},x_{j+2},\dots,x_n)\\
			&\quad+\sum_{t=j+1}^{n-1} (-1)^{t+1} 1_{g\m x_1\dots x_t} w\circ\s_{n-1,j}^g(x_1,\dots,x_tx_{t+1},\dots,x_n)\\
			&\quad +(-1)^{n+1} 1_{g\m x_1\dots x_n} w\circ\s_{n-1,j}^g(x_1,\dots,x_{n-1}).
		\end{align*}
		It follows that 
		\begin{align*}
			&-1_{\eta_1^g(x_1\dots x_j)} w(\tau_{j-1}^g(x_1,\dots,x_{j-1}),(\ol{x\m_{j-1}\dots x\m_1g})\m x_j,x_{j+1},\dots,x_n)\\
			&\quad=(-1)^j\af_{\eta_1^g(x_1)}(1_{\eta_1^g(x_1)\m}w\circ\s_{n-1,j-1}^{\ol{x\m_1 g}}(x_2,\dots,x_n))\\
			&\quad\quad+\sum_{s=1}^{j-1} (-1)^{s+j} 1_{\eta_1^g(x_1\dots x_s)}w\circ\s_{n-1,j-1}^g(x_1,\dots,x_sx_{s+1},\dots,x_n)\\
			&\quad\quad -1_{g\m x_1\dots x_j} w(\tau_j^g(x_1,\dots,x_j),(\ol{x\m_j\dots x\m_1g})\m x_{j+1},x_{j+2},\dots,x_n)\\
			&\quad\quad+\sum_{t=j+1}^{n-1} (-1)^{t+j+1} 1_{g\m x_1\dots x_t} w\circ\s_{n-1,j}^g(x_1,\dots,x_tx_{t+1},\dots,x_n)\\
			&\quad\quad +(-1)^{n+j+1} 1_{g\m x_1\dots x_n} w\circ\s_{n-1,j}^g(x_1,\dots,x_{n-1}).
		\end{align*}
		The addition of $\Sg(j,j)$ followed by the multiplication of both sides by the idempotent
		\begin{align*}
			1_{\s_{n,j}^g(x_1,\dots,x_n)}=1_{\eta_1^g(x_1)}1_{\eta_1^g(x_1x_2)}\dots 1_{\eta_1^g(x_1\dots x_j)}1_{(g\m x_1\dots x_j,x_{j+1},\dots,x_n)}
		\end{align*}
		gives the desired equality \cref{w-Sg(j_j)}.
	\end{proof}
	
	\begin{lem}\label{w'-cohom-w-final-step}
		For all $w\in Z_{par}^1(G,\A)$, $g\in\Lb$ and $x\in G$:
		\begin{align}\label{w-Sg(n_n)-n=1}
			-1_{\eta_1^g(x)}w(g\m x)=-\af_{\eta_1^g(x)}(1_{\eta_1^g(x)\m}w((\ol{x\m g})\m))-1_{g\m x}w\circ\tau_1^g(x).
		\end{align}
		Moreover, for all $n>1$, $w\in Z_{par}^n(G,\A)$, $g\in\Lb$ and $x_1,\dots,x_n\in G$:
		\begin{align}
			&-1_{\eta_1^g(x_1\dots x_n)}w(\tau_{n-1}^g(x_1,\dots,x_{n-1}),(\ol{x\m_{n-1}\dots x\m_1 g})\m x_n)\notag\\
			&=(-1)^n\af_{\eta_1^g(x_1)}(1_{\eta_1^g(x_1)\m}w\circ\s_{n-1,n-1}^{\ol{x\m_1 g}}(x_2,\dots,x_n))\notag\\
			&\quad+1_{\s_{n,n}^g(x_1,\dots,x_n)}(\sum_{s=1}^{n-1}(-1)^{s+n} w\circ\s_{n-1,n-1}^g(x_1,\dots,x_sx_{s+1},\dots,x_n)\notag\\
			&\quad-w\circ\tau_n^g(x_1,\dots,x_n)).\label{w-Sg(n_n)}
		\end{align}
	\end{lem}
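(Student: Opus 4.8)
The plan is to derive both \cref{w-Sg(n_n)-n=1} and \cref{w-Sg(n_n)} directly from the $n$-cocycle identity $\dl^nw=0$, written in the form \cref{(dl^nf)(g_1...g_(n+1))}, evaluated at the single $(n+1)$-tuple $\s_{n,n}^g(x_1,\dots,x_n)$. This parallels the proofs of \cref{w'-cohom-w-base-1} and \cref{w'-cohom-w-step}, but now there are no surviving $\Sg$-terms on the right, since $\Sg(n,n)=0_\A$ by convention, so only the leftover sum over $1\le s\le n-1$ and the term $-w\circ\tau_n^g(x_1,\dots,x_n)$ remain.

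For $n>1$ I would write $\s_{n,n}^g(x_1,\dots,x_n)=(\eta_1^g(x_1),\dots,\eta_n^g(x_1,\dots,x_n),(\ol{x\m_n\dots x\m_1 g})\m)$ via \cref{sigma_nn-def} and \cref{tau_n^g-def}, denote its entries $y_1,\dots,y_{n+1}$, and record the identities needed to recognize the terms of the expansion: (i) $y_1\cdots y_k=\eta_1^g(x_1\dots x_k)$ for $1\le k\le n$ and $y_1\cdots y_{n+1}=g\m x_1\dots x_n$, by \cref{prod-of-etas} and \cref{eta_n^g-def}; (ii) $(y_2,\dots,y_{n+1})=\s_{n-1,n-1}^{\ol{x\m_1 g}}(x_2,\dots,x_n)$, a consequence of \cref{eta_n^g-def} and \cref{sigma_nn-def} together with the ``change of base point'' $\ol{x\m_{k-1}\dots x\m_2\ol{x\m_1 g}}=\ol{x\m_{k-1}\dots x\m_1 g}$, which gives $\eta_k^{\ol{x\m_1 g}}(x_2,\dots,x_k)=\eta_{k+1}^g(x_1,\dots,x_{k+1})$; (iii) for $1\le i\le n-1$, $(y_1,\dots,y_iy_{i+1},\dots,y_{n+1})=\s_{n-1,n-1}^g(x_1,\dots,x_ix_{i+1},\dots,x_n)$, because merging adjacent entries corresponds to merging arguments, $\eta_i^g(x_1,\dots,x_i)\eta_{i+1}^g(x_1,\dots,x_{i+1})=\eta_i^g(x_1,\dots,x_{i-1},x_ix_{i+1})$, plus a reindexing of the later $\eta$'s; (iv) $y_ny_{n+1}=(\ol{x\m_{n-1}\dots x\m_1 g})\m x_n$, so $(y_1,\dots,y_{n-1},y_ny_{n+1})=(\tau_{n-1}^g(x_1,\dots,x_{n-1}),(\ol{x\m_{n-1}\dots x\m_1 g})\m x_n)$; and (v) $(y_1,\dots,y_n)=\tau_n^g(x_1,\dots,x_n)$. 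Substituting these into $0=(\dl^nw)\circ\s_{n,n}^g(x_1,\dots,x_n)$ produces, in order, the term $\af_{\eta_1^g(x_1)}(1_{\eta_1^g(x_1)\m}w\circ\s_{n-1,n-1}^{\ol{x\m_1 g}}(x_2,\dots,x_n))$, the terms $(-1)^i1_{\eta_1^g(x_1\dots x_i)}w\circ\s_{n-1,n-1}^g(x_1,\dots,x_ix_{i+1},\dots,x_n)$ for $1\le i\le n-1$, the term $(-1)^n1_{\eta_1^g(x_1\dots x_n)}w(\tau_{n-1}^g(x_1,\dots,x_{n-1}),(\ol{x\m_{n-1}\dots x\m_1 g})\m x_n)$, and the term $(-1)^{n+1}1_{g\m x_1\dots x_n}w\circ\tau_n^g(x_1,\dots,x_n)$. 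Isolating the third term and multiplying by $(-1)^n$ gives the right-hand side of \cref{w-Sg(n_n)}, except that the idempotents appear one at a time, as $1_{\eta_1^g(x_1\dots x_i)}$ and $1_{g\m x_1\dots x_n}$, rather than as the common factor $1_{\s_{n,n}^g(x_1,\dots,x_n)}=1_{\eta_1^g(x_1)}1_{\eta_1^g(x_1x_2)}\cdots1_{\eta_1^g(x_1\dots x_n)}1_{g\m x_1\dots x_n}$.

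The \emph{main obstacle} is this last bookkeeping step, namely checking that multiplying every summand by the single idempotent $1_{\s_{n,n}^g(x_1,\dots,x_n)}$ changes nothing. This holds because each cochain value in question already lies in the ideal prescribed by \cref{C^0_par(G_M)=...}: $w\circ\s_{n-1,n-1}^g(x_1,\dots,x_ix_{i+1},\dots,x_n)$ lies in $1_{\s_{n-1,n-1}^g(x_1,\dots,x_ix_{i+1},\dots,x_n)}\A$ by membership in $C^{n-1}_{par}(G,\A)$, and $w\circ\tau_n^g(x_1,\dots,x_n)$ lies in $1_{\tau_n^g(x_1,\dots,x_n)}\A$ by membership in $C^n_{par}(G,\A)$, so each missing factor $1_{\eta_1^g(x_1\dots x_k)}$ is absorbed. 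This is the additive counterpart of the idempotent manipulations in the proof of \cite[Lemma~3.7]{DKS} (see also \cite[Lemma~3.6]{DKS}), and I would refer the reader there for the routine details, including the sign reindexing.

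Finally, \cref{w-Sg(n_n)-n=1} is the $n=1$ instance of the same computation: here $\s_{1,1}^g(x)=(\eta_1^g(x),(\ol{x\m g})\m)$, one has $\eta_1^g(x)(\ol{x\m g})\m=g\m x$, $\s_{0,0}^{\ol{x\m g}}=(\ol{x\m g})\m$, $\tau_1^g(x)=\eta_1^g(x)$, and the inner sum over $s$ is empty, so expanding $(\dl^1w)(\eta_1^g(x),(\ol{x\m g})\m)=0$ by \cref{(dl^nf)(g_1...g_(n+1))}, solving for $1_{\eta_1^g(x)}w(g\m x)$, and negating yields \cref{w-Sg(n_n)-n=1}, after noting that $1_{g\m x}w\circ\tau_1^g(x)=1_{\s_{1,1}^g(x)}w\circ\tau_1^g(x)$ since $w\circ\tau_1^g(x)\in1_{\eta_1^g(x)}\A$. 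No separate argument is required beyond this short direct check.
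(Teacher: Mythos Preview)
Your proposal is correct and follows exactly the paper's approach: expand $(\dl^nw)\circ\s_{n,n}^g(x_1,\dots,x_n)=0$ using \cref{(dl^nf)(g_1...g_(n+1))}, identify each term via the combinatorial identities for $\eta$, $\tau$, $\sigma$, isolate the $(-1)^n$-term, and then multiply by $1_{\s_{n,n}^g(x_1,\dots,x_n)}$. One minor slip: where you justify the absorption of idempotents you write ``membership in $C^{n-1}_{par}(G,\A)$'' for $w\circ\s_{n-1,n-1}^g(\dots)$, but since $\s_{n-1,n-1}^g$ outputs an $n$-tuple this should read $C^n_{par}(G,\A)$.
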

	\begin{proof}
		We first prove \cref{w-Sg(n_n)-n=1}:
		\begin{align*}
			0&=(\dl^1w) (\eta_1^g(x),(\ol{x\m g})\m)\\
			&=\alpha_{\eta_1^g(x)}(1_{\eta_1^g(x)\m}w((\ol{x\m g})\m))-1_{\eta_1^g(x)}w(g\m x)+1_{g\m x}w\circ\tau_1^g(x).
		\end{align*}
		
		To get \cref{w-Sg(n_n)}, write the following:
		\begin{align*}
			0&=(\dl^nw)\circ\s_{n,n}^g(x_1,\dots,x_n)\\  
			&=\af_{\eta_1^g(x_1)}(1_{\eta_1^g(x_1)\m}w\circ\s_{n-1,n-1}^{\ol{x\m_1 g}}(x_2,\dots,x_n))\\
			&\quad+\sum_{s=1}^{n-1}(-1)^s 1_{\eta_1^g(x_1\dots x_s)} w\circ\s_{n-1,n-1}^g(x_1,\dots,x_sx_{s+1},\dots,x_n)\\
			&\quad +(-1)^n 1_{\eta_1^g(x_1\dots x_n)} w(\tau_{n-1}^g(x_1,\dots,x_{n-1}),(\ol{x\m_{n-1}\dots x\m_1 g})\m x_n)\\
			&\quad +(-1)^{n+1} 1_{g\m x_1\dots x_n} w\circ\tau_n^g(x_1,\dots,x_n).
		\end{align*}
		It remains to multiply both sides by $1_{\s_{n,n}^g(x_1,\dots,x_n)}$. 
	\end{proof}
	
	\begin{lem}\label{w'-cohom-w-recursion}
		For all $n>0$, $w\in Z_{par}^n(G,\A)$ and $x_1,\dots,x_n\in G$:
		\begin{align}
			&(\dl^{n-1}\ve)(x_1,\dots,x_n)-\af_{x_1}(1_{x\m_1}\ve(x_2,\dots,x_n))-w(x_1,\dots,x_n)\notag\\
			&=1_{(x_1,\dots,x_n)}\prod_{g\in\Lb}\0_g\circ\af_{\eta_1^g(x_1)}\left(1_{\eta_1^g(x_1)\m}\sum_{j=0}^{n-1} (-1)^{j+1} w\circ\s_{n-1,j}^{\ol{x\m_1 g}}(x_2,\dots,x_n)\right)\notag\\
			&\quad -w'(x_1\dots,x_n).\label{delta-e-alpha-inv-w-inv=prod-w'-inv}
		\end{align}
	\end{lem}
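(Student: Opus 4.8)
The plan is to prove \cref{delta-e-alpha-inv-w-inv=prod-w'-inv} by induction on $n$, using the chain of identities established in \cref{w'-cohom-w-base-0,w'-cohom-w-base-1,w'-cohom-w-step,w'-cohom-w-final-step}. First I would split into the base case $n=1$ and the inductive step $n>1$. For $n=1$, start from \cref{delta-e-alpha-inv-w-inv-n=1}, which reduces the left-hand side to $1_x\prod_{g\in\Lb}\0_g(-w(g\m x))$; then apply \cref{w-Sg(n_n)-n=1} inside each factor (after inserting the idempotent $1_{\eta_1^g(x)}$, which is absorbed by $\0_g$ thanks to a computation like \cref{prod-0_g(1_(g-inv_x_1...x_n))}, since $\eta_1^g(x)\in H$ and $\0_g(1_{\eta_1^g(x)}a)=\0_g(a)$ by \cref{0_g(a)=0_g(1_xa)}). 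This rewrites the factor as $-\af_{\eta_1^g(x)}(1_{\eta_1^g(x)\m}w((\ol{x\m g})\m))-1_{g\m x}w\circ\tau_1^g(x)$; recognizing $\s_{0,0}^{\ol{x\m g}}=(\ol{x\m g})\m$ and $w\circ\tau_1^g(x)=w(\eta_1^g(x_1))$ then yields exactly the right-hand side of \cref{delta-e-alpha-inv-w-inv=prod-w'-inv} for $n=1$ (here the sum $\sum_{j=0}^{n-1}$ has the single term $j=0$, and $w'(x_1)=1_{x_1}\prod_{g\in\Lb}\0_g\circ w\circ\tau_1^g(x_1)$).

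For $n>1$, I would begin from \cref{delta-e-alpha-inv-w-inv}, which expresses the left-hand side of \cref{delta-e-alpha-inv-w-inv=prod-w'-inv} as $1_{(x_1,\dots,x_n)}\prod_{g\in\Lb}\0_g(-w(g\m x_1,x_2,\dots,x_n)+\Sg(1,1))$. The idea is to telescope: inside each $\0_g$-factor, apply \cref{-w+Sg(1_1)} to peel off the term $j=1$, namely $-\alpha_{\eta_1^g(x_1)}(1_{\eta_1^g(x_1)\m}w\circ\s_{n-1,0}^{\ol{x\m_1 g}}(x_2,\dots,x_n))$, leaving behind an expression of the same shape $-w(\tau_1^g(x_1),(\ol{x\m_1 g})\m x_2,\dots,x_n)+\Sg(2,2)+(\text{mixed terms})$. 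Iterating with \cref{w'-cohom-w-step} for $j=2,\dots,n-1$ peels off successive terms $(-1)^{j+1}\alpha_{\eta_1^g(x_1)}(1_{\eta_1^g(x_1)\m}w\circ\s_{n-1,j-1}^{\ol{x\m_1 g}}(x_2,\dots,x_n))$, and a final application of \cref{w-Sg(n_n)} extracts the $j=n-1$ term and the residual $-w\circ\tau_n^g(x_1,\dots,x_n)$. Before summing over $g\in\Lb$, the idempotents $1_{\s_{n,j}^g(x_1,\dots,x_n)}$ appearing in these lemmas must be absorbed: each equals $1_{\eta_1^g(x_1)}\cdots 1_{\eta_1^g(x_1\dots x_j)}\cdot 1_{(g\m x_1\dots x_j,x_{j+1},\dots,x_n)}$, so multiplying by $1_{(x_1,\dots,x_n)}$ on the outside and using \cref{0_g(a)=0_g(1_xa)} together with the $\0_g$-absorption computation exactly as in \cref{prod-0_g(1_(g-inv_x_1...x_n))} lets each $\0_g$ swallow the extra idempotents. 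Summing the peeled-off terms over $j$ and pulling $\0_g\circ\af_{\eta_1^g(x_1)}$ out front (legitimate since $\0_g$ is a homomorphism and $\af_{\eta_1^g(x_1)}$ is applied before $\0_g$ in the definitions) produces $1_{(x_1,\dots,x_n)}\prod_{g\in\Lb}\0_g\circ\af_{\eta_1^g(x_1)}(1_{\eta_1^g(x_1)\m}\sum_{j=0}^{n-1}(-1)^{j+1}w\circ\s_{n-1,j}^{\ol{x\m_1 g}}(x_2,\dots,x_n))$, while the residual term $\prod_{g\in\Lb}\0_g(-w\circ\tau_n^g(x_1,\dots,x_n))$ collected with $1_{(x_1,\dots,x_n)}$ is precisely $-w'(x_1,\dots,x_n)$ by \cref{w'-def}.

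The main obstacle is the bookkeeping in the telescoping step: one must verify that the "mixed terms" $\sum_{i}(-1)^{i+j}w\circ\s_{n-1,i}^g(\dots)$ and $\sum_{s}(-1)^{s+j}w\circ\s_{n-1,j-1}^g(\dots)$ generated at stage $j$ of \cref{w'-cohom-w-step} recombine correctly with $\Sg(j,j)$ to reform $\Sg(j+1,j+1)$ plus exactly the extra pieces consumed at stage $j+1$, so that nothing is lost or double-counted across the recursion; this is the additive analogue of the argument in the proof of~\cite[Lemma~3.8]{DKS} and requires carefully matching the index ranges in the definition \cref{Sg(l_m)-def} of $\Sg(l,m)$ against the terms produced by each lemma. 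A secondary point requiring care is that all the idempotent manipulations (absorbing $1_{\eta_1^g(x_1\dots x_s)}$ into $\0_g\circ\af_{\eta_1^g(x_1)}$) are valid because $\eta_1^g(x_1)\in H$ stabilizes $\A_1$, so \cref{0_g(a)=0_g(1_xa)} applies, and because $\af_{\eta_1^g(x_1)}$ restricted to $\A_1$ lands back in $\A_1$ before $\0_g$ is applied.
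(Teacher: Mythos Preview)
Your plan is correct and matches the paper's proof in substance: same case split $n=1$ versus $n>1$, same chain of lemmas, same absorption of the idempotents $1_{\eta_1^g(\cdots)}$ via \cref{0_g(a)=0_g(1_xa)}, and the same identification of the residual term with $-w'$. Two small remarks. First, the argument is not an induction on $n$: the case $n>1$ is handled directly for each fixed $n$ by iterating over $j$, never invoking the statement for $n-1$. Second, the paper organizes the $n>1$ case slightly differently from your telescoping description: instead of feeding the output of stage $j$ into stage $j+1$, it multiplies each of \cref{-w+Sg(1_1),w-Sg(j_j),w-Sg(n_n)} by the single idempotent $e=\prod_{i=1}^n 1_{\s_{n,i}^g(x_1,\dots,x_n)}$ and adds them all at once; the mixed terms then appear as the two double sums $\sum_{j=1}^{n-1}\sum_{i=j}^{n-1}(-1)^{i+j}w\circ\s_{n-1,i}^g(\dots x_jx_{j+1}\dots)$ and $\sum_{j=2}^{n}\sum_{s=1}^{j-1}(-1)^{s+j}w\circ\s_{n-1,j-1}^g(\dots x_sx_{s+1}\dots)$, and the substitution $j'=j-1$ in the second followed by swapping the order of summation shows these cancel. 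This resolves your ``main obstacle'' in one clean stroke and avoids tracking the shape of the remainder through each stage.
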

	\begin{proof}
		Let $n=1$. Using \cref{delta-e-alpha-inv-w-inv-n=1,w-Sg(n_n)-n=1,0_g(a)=0_g(1_xa),w'-def,sigma_00-def,prod-0_g(1_(g-inv_x_1...x_n))}, we have
		\begin{align*}
			(\dl^0\ve)(x)-\af_x(1_{x\m}\ve)-w(x)&=1_x\prod_{g\in\Lb}\0_g(-w(g\m x))\\
			&=1_x\prod_{g\in\Lb}\0_g(-1_{\eta_1^g(x)}w(g\m x))\\
			&=-1_x\prod_{g\in\Lb}\0_g(\af_{\eta_1^g(x)}(1_{\eta_1^g(x)\m}w((\ol{x\m g})\m)))\\
			&\quad -1_x\prod_{g\in\Lb}\0_g(1_{g\m x}w\circ\tau_1^g(x))\\
			&=-1_x\prod_{g\in\Lb}\0_g\circ\af_{\eta_1^g(x)}(1_{\eta_1^g(x)\m}w(\s_{0,0}^{\ol{x\m g}}))\\
			&\quad -w'(x).
		\end{align*}
		
		For $n>1$ we use equalities \cref{-w+Sg(1_1),w-Sg(j_j),w-Sg(n_n)} multiplied by the idempotent
		\begin{align*}
			e=\prod_{i=1}^n 1_{\s_{n,i}^g(x_1,\dots,x_n)}=1_{(g\m,x_1,\dots,x_n)}\prod_{i=1}^n 1_{\eta_i^g(x_1,\dots,x_i)}.
		\end{align*}
		We have
		\begin{align}
			&e(-w(g\m x_1,x_2,\dots,x_n)+\Sg(1,1))\notag\\
			&=e\af_{\eta_1^g(x_1)}\left(1_{\eta_1^g(x_1)\m}\sum_{j=0}^{n-1} (-1)^{j+1} w\circ\s_{n-1,j}^{\ol{x\m_1 g}}(x_2,\dots,x_n)\right)\notag\\
			&\quad-e(w\circ\tau_n^g)(x_1,\dots,x_n)\notag\\
			&\quad+e\sum_{j=1}^{n-1}\sum_{i=j}^{n-1} (-1)^{i+j} w\circ\s_{n-1,i}^g(x_1,\dots,x_jx_{j+1},\dots,x_n)\label{prod_j=1^n-1-i=j^n-1}\\
			&\quad+e\sum_{j=2}^n\sum_{s=1}^{j-1} (-1)^{s+j} w\circ\s_{n-1,j-1}^g(x_1,\dots,x_sx_{s+1},\dots,x_n).\label{sum_j=1^n-1-s=1^j-1}
		\end{align}
		Introducing $j'=j-1$ in the sum \cref{sum_j=1^n-1-s=1^j-1}, we rewrite it as
		$$
		\sum_{j'=1}^{n-1}\sum_{s=1}^{j'} (-1)^{s+j'+1} w\circ\s_{n-1,j'}^g(x_1,\dots,x_sx_{s+1},\dots,x_n).
		$$
		Switching the order of summation, we get
		$$
		\sum_{s=1}^{n-1}\sum_{j'=s}^{n-1} (-1)^{s+j'+1} w\circ\s_{n-1,j'}^g(x_1,\dots,x_sx_{s+1},\dots,x_n),
		$$
		which is the opposite of the sum \cref{prod_j=1^n-1-i=j^n-1}. Hence,
		\begin{align}
			&e(-w(g\m x_1,x_2,\dots,x_n)+\Sg(1,1))\notag\\
			&=e\af_{\eta_1^g(x_1)}\left(1_{\eta_1^g(x_1)\m}\sum_{j=0}^{n-1} (-1)^{j+1} w\circ\s_{n-1,j}^{\ol{x\m_1 g}}(x_2,\dots,x_n)\right)\notag\\
			&\quad -e(w\circ\tau_n^g)(x_1,\dots,x_n).\label{e(-w(g-inv_x_1...x_n)+Sg(1_1))=e-af-e(w-tau)}
		\end{align}
		Since $\eta_i^g(x_1,\dots,x_i)\in H$ for all $i$, then after the application of $\0_g$ to the both sides of \cref{e(-w(g-inv_x_1...x_n)+Sg(1_1))=e-af-e(w-tau)}, we may remove $\prod_{i=1}^n 1_{\eta_i^g(x_1,\dots,x_i)}$ from $e$ by \cref{0_g(a)=0_g(1_xa)}. Moreover, using \cref{prod-0_g(1_(g-inv_x_1...x_n))}, we obtain
		\begin{align*}
			&1_{(x_1,\dots,x_n)}\prod_{g\in\Lb}\0_g(-w(g\m x_1,x_2,\dots,x_n)+\Sg(1,1))\\
			&=1_{(x_1,\dots,x_n)}\prod_{g\in\Lb}\0_g\circ\af_{\eta_1^g(x_1)}\left(1_{\eta_1^g(x_1)\m}\sum_{j=0}^{n-1} (-1)^{j+1} w\circ\s_{n-1,j}^{\ol{x\m_1 g}}(x_2,\dots,x_n)\right)\\
			&\quad -1_{(x_1,\dots,x_n)}\prod_{g\in\Lb}\0_g\circ w\circ\tau_n^g(x_1,\dots,x_n).
		\end{align*}
		
		Then \cref{delta-e-alpha-inv-w-inv=prod-w'-inv} follows by \cref{delta-e-alpha-inv-w-inv,w'-def}.
	\end{proof}
	
	The following lemma is~\cite[Lemma 3.10]{DKS}.
	\begin{lem}\label{apply-af-to-prod-and-switch-with-0}
		For all $x\in G$ and $a:\Lb'\to\A$ one has
		\begin{align}\label{af_x(1_x-inv-prod-0_g)=1_x-prod-0_g-circ-af}
			\af_x\left(1_{x\m}\prod_{g\in\Lb}\0_g(a(g))\right)=1_x\prod_{g\in\Lb}\0_g\circ\af_{\eta_1^g(x)}\left(1_{\eta_1^g(x)\m}a\left(\ol{x\m g}\right)\right).
		\end{align}
	\end{lem}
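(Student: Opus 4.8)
The plan is to compute each side of \cref{af_x(1_x-inv-prod-0_g)=1_x-prod-0_g-circ-af} componentwise with respect to the block decomposition $\A=\prod_{\lb\in\Lb}\A_\lb$ and to check that the two results coincide after a re-indexing of $\Lb$. This simply reproduces the argument of~\cite[Lemma~3.10]{DKS}: the statement involves only the partial action $\af$ and the block decomposition of $\A$, so neither commutativity of $\A$ nor the $\kpar G$-module structure enters, and one could in fact just cite~\cite{DKS}; I sketch the steps for completeness.

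First I would treat the left-hand side. Since $\0_g(a(g))=\af_g(\pr_1(a(g)))\in\A_g$ by \cref{0_g-def} and distinct blocks are orthogonal central ideals, $\prod_{g\in\Lb}\0_g(a(g))$ is the element whose $g$-component is $\0_g(a(g))$; multiplying by the central idempotent $1_{x\m}$ retains exactly the components with $\A_g\sst\cD_{x\m}$, which by \cref{A_g-sst-D_x-iff-A_1-sst-D_g-inv-x} are those with $\ol{xg}\in\Lb$. Applying $\af_x$, which is a ring isomorphism $\cD_{x\m}\to\cD_x$ carrying $\A_g$ onto $\A_{\ol{xg}}$ by \cref{Lb-and-Lb'} and therefore acting componentwise on the product, and invoking axiom~(ii) of a partial action, I would get $\af_x(\0_g(a(g)))=\af_x(\af_g(\pr_1(a(g))))=\af_{xg}(\pr_1(a(g)))$, hence
\begin{align*}
\af_x\Bigl(1_{x\m}\prod_{g\in\Lb}\0_g(a(g))\Bigr)=\prod_{g\in\Lb,\ \ol{xg}\in\Lb}\af_{xg}(\pr_1(a(g))).
\end{align*}

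Next I would rewrite the right-hand side. The key observation is that $\eta_1^g(x)=\eta(x\m g)=g\m x\,\ol{x\m g}$ lies in $H$ by \cref{eta_n^g-def} and the definition of $H$, so $\af_{\eta_1^g(x)}$ restricts to an automorphism of the block $\A_1$; consequently it commutes with $\pr_1$ on $\cD_{\eta_1^g(x)\m}$, and $1_{\eta_1^g(x)\m}$ acts as the identity on $\A_1$. Using $\0_g=\af_g\circ\pr_1$, these facts, and two more applications of axiom~(ii) — legitimate since $\pr_1(a(\ol{x\m g}))\in\A_1\sst\cD_{\eta_1^g(x)\m}$ and $\af_{\eta_1^g(x)}(\pr_1(a(\ol{x\m g})))\in\A_1\sst\cD_{g\m}$ — I would obtain
\begin{align*}
\0_g\circ\af_{\eta_1^g(x)}\bigl(1_{\eta_1^g(x)\m}a(\ol{x\m g})\bigr)&=\af_g\bigl(\af_{\eta_1^g(x)}(\pr_1(a(\ol{x\m g})))\bigr)\\
&=\af_{g\eta_1^g(x)}(\pr_1(a(\ol{x\m g})))=\af_{x\,\ol{x\m g}}(\pr_1(a(\ol{x\m g}))),
\end{align*}
which lies in $\A_g$ since $\ol{x\,\ol{x\m g}}=g$. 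Multiplying by $1_x$ keeps exactly the terms with $\A_g\sst\cD_x$, i.e.\ with $\ol{x\m g}\in\Lb$, so the right-hand side of \cref{af_x(1_x-inv-prod-0_g)=1_x-prod-0_g-circ-af} equals $\prod_{g\in\Lb,\ \ol{x\m g}\in\Lb}\af_{x\,\ol{x\m g}}(\pr_1(a(\ol{x\m g})))$.

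Finally I would match the two expressions: the map $g\mapsto h:=\ol{x\m g}$ is a bijection from $\{g\in\Lb:\ol{x\m g}\in\Lb\}$ onto $\{h\in\Lb:\ol{xh}\in\Lb\}$ with inverse $h\mapsto\ol{xh}$, the identities $\ol{x\,\ol{x\m g}}=g$ and $\ol{x\m\ol{xh}}=h$ following from the fact that a representative in $\Lb'$ multiplied by an element of $H$ stays in the same coset. Under this substitution $\af_{x\,\ol{x\m g}}(\pr_1(a(\ol{x\m g})))$ becomes $\af_{xh}(\pr_1(a(h)))$, so the right-hand side turns into the expression found for the left-hand side, which would complete the proof. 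I expect the only genuinely delicate points to be the domain memberships that license the uses of axiom~(ii), and the verification that $\af_{\eta_1^g(x)}$ — being $\af$ evaluated at an element of the stabilizer $H$ — stabilizes $\A_1$ and hence commutes with $\pr_1$; everything else is coset bookkeeping, carried out exactly as in~\cite[Lemma~3.10]{DKS}.
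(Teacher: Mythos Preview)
Your proposal is correct and follows exactly the approach the paper takes: the paper simply cites~\cite[Lemma~3.10]{DKS} without giving any argument, and your sketch is precisely the componentwise computation underlying that lemma. There is nothing to add.
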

	
	\begin{lem}\label{alpha_eta_1^g(x_1)}
		For all $n>0$, $w\in Z_{par}^n(G,\A)$ and $x_1,\dots,x_n\in G$:
		\begin{align}
			&1_{(x_1\dots,x_n)}\prod_{g\in\Lb}\0_g\circ\alpha_{\eta_1^g(x_1)}\left(1_{\eta_1^g(x_1)\m}\sum_{j=0}^{n-1} (-1)^j w\circ\s_{n-1,j}^{\ol{x\m_1 g}}(x_2,\dots,x_n)\right)\notag\\
			&=\af_{x_1}\left(1_{x\m_1}\ve(x_2,\dots,x_n)\right).\label{prod_g-alpha_eta_1^g=alpha_x_1}
		\end{align}
	\end{lem}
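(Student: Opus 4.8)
The plan is to reduce the identity to Lemma~\ref{apply-af-to-prod-and-switch-with-0}, which already captures exactly how $\af_{x_1}(1_{x_1\m}-)$ interacts with a product $\prod_{g\in\Lb}\0_g$: it reindexes $g\mapsto\ol{x_1\m g}$ and inserts the stabilizer twists $\eta_1^g(x_1)$. What remains is only to split off and evaluate the leading central idempotent that enters through the definition of $\ve$, and to keep track of where the cocycle's arguments get shifted.

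For $n>1$, I would start from~\cref{eps-def}, writing $\ve(x_2,\dots,x_n)=1_{(x_2,\dots,x_n)}\prod_{g\in\Lb}\0_g(a(g))$ where $a\colon\Lb'\to\A$ is the function $a(g)=\sum_{j=0}^{n-1}(-1)^j w\circ\s_{n-1,j}^g(x_2,\dots,x_n)$ (well defined on all of $\Lb'$ by the conventions fixed before~\cref{sigma_n0-def}, a point that will matter in a moment). Since $\af_{x_1}$ is an algebra isomorphism on $\cD_{x_1\m}=1_{x_1\m}\A$ and $1_{x_1\m}$ is idempotent, the map $c\mapsto\af_{x_1}(1_{x_1\m}c)$ is multiplicative on $\A$, so
\[
\af_{x_1}\big(1_{x_1\m}\ve(x_2,\dots,x_n)\big)=\af_{x_1}\big(1_{x_1\m}1_{(x_2,\dots,x_n)}\big)\cdot\af_{x_1}\Big(1_{x_1\m}\prod_{g\in\Lb}\0_g(a(g))\Big).
\]
Using $\af_{x_1}(1_{x_1\m}1_h)=1_{x_1}1_{x_1h}$ on each factor of $1_{(x_2,\dots,x_n)}$, the first factor on the right collapses to $1_{(x_1,\dots,x_n)}$. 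To the second factor I would apply Lemma~\ref{apply-af-to-prod-and-switch-with-0} with $x=x_1$ and the function $a$ above, obtaining $1_{x_1}\prod_{g\in\Lb}\0_g\circ\af_{\eta_1^g(x_1)}\big(1_{\eta_1^g(x_1)\m}a(\ol{x_1\m g})\big)$, in which $a(\ol{x_1\m g})=\sum_{j=0}^{n-1}(-1)^j w\circ\s_{n-1,j}^{\ol{x_1\m g}}(x_2,\dots,x_n)$. Multiplying the two factors and absorbing $1_{x_1}$ into $1_{(x_1,\dots,x_n)}$ gives exactly the left-hand side of~\cref{prod_g-alpha_eta_1^g=alpha_x_1}.

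The case $n=1$ should be dealt with separately, since $\ve$ is then the element of $\A$ from~\cref{eps-n=0-def}, namely $\ve=\prod_{g\in\Lb}\0_g(w(g\m))$; here there is no leading idempotent $1_{(x_2,\dots,x_n)}$, and the sum over $j$ in~\cref{prod_g-alpha_eta_1^g=alpha_x_1} collapses to the single term $j=0$, with $\s_{0,0}^{\ol{x_1\m g}}=(\ol{x_1\m g})\m$ by~\cref{sigma_00-def}. A single application of Lemma~\ref{apply-af-to-prod-and-switch-with-0} with $a(g)=w(g\m)$ then yields the claimed equality directly. I do not anticipate a genuine obstacle: the only place that needs attention is the central-idempotent bookkeeping in the middle paragraph (notably the identity $\af_{x_1}(1_{x_1\m}1_{(x_2,\dots,x_n)})=1_{(x_1,\dots,x_n)}$), together with the observation that $a$ must be regarded as a function on the whole transversal $\Lb'$, not just on $\Lb$, so that the term $a(\ol{x_1\m g})$ produced by Lemma~\ref{apply-af-to-prod-and-switch-with-0} is meaningful.
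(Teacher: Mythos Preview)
Your proposal is correct and follows essentially the same route as the paper: both arguments hinge on \cref{apply-af-to-prod-and-switch-with-0} applied with $a(g)=\sum_{j=0}^{n-1}(-1)^j w\circ\s_{n-1,j}^g(x_2,\dots,x_n)$ together with the idempotent identity $\af_{x_1}(1_{x_1\m}1_{(x_2,\dots,x_n)})=1_{(x_1,\dots,x_n)}$. The only cosmetic differences are that the paper runs the computation from the left-hand side to the right (reading \cref{af_x(1_x-inv-prod-0_g)=1_x-prod-0_g-circ-af} right-to-left) while you start from the right-hand side, and you separate the case $n=1$ explicitly whereas the paper treats it under the same formula via~\cref{eps-n=0-def}.
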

	\begin{proof}
		Applying \cref{apply-af-to-prod-and-switch-with-0} with
		$$
		a(g)=\sum_{j=0}^{n-1} (-1)^j w\circ\s_{n-1,j}^g(x_2,\dots,x_n)
		$$
		(where $n$, $w$ and $x_2,\dots,x_n$ are fixed and $g\in\Lb'$), we see that the left-hand side of \cref{prod_g-alpha_eta_1^g=alpha_x_1} equals
		$$
		1_{(x_1\dots,x_n)}\af_{x_1}\left(1_{x\m_1}\prod_{g\in\Lb}\0_g\left(\sum_{j=0}^{n-1} (-1)^j w\circ\s_{n-1,j}^g(x_2,\dots,x_n)\right)\right).
		$$
		Since $1_{(x_1,\dots,x_n)}=\af_{x_1}\left(1_{x\m_1}1_{(x_2,\dots,x_n)}\right)$, we obtain \cref{prod_g-alpha_eta_1^g=alpha_x_1} by \cref{eps-def}.
	\end{proof}
	
	As a consequence of \cref{w'-cohom-w-recursion,alpha_eta_1^g(x_1)} we obtain the next.
	
	\begin{thrm}\label{w'-cohom-w}
		Let $n>0$ and $w\in Z_{par}^n(G,\A)$. Then $w=\dl^{n-1}\ve+w'$. In particular, $w'\in Z_{par}^n(G,\A)$.
	\end{thrm}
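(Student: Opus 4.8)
The plan is to derive \cref{w'-cohom-w} directly by combining \cref{w'-cohom-w-recursion} with \cref{alpha_eta_1^g(x_1)}; all the substantive work has already been carried out in the recursive unwinding performed in \cref{w'-cohom-w-base-0,w'-cohom-w-base-1,w'-cohom-w-step,w'-cohom-w-final-step,w'-cohom-w-recursion}, so what remains is a short bookkeeping step. First I would fix $x_1,\dots,x_n\in G$ and recall the identity of \cref{w'-cohom-w-recursion}:
\begin{align*}
&(\dl^{n-1}\ve)(x_1,\dots,x_n)-\af_{x_1}(1_{x\m_1}\ve(x_2,\dots,x_n))-w(x_1,\dots,x_n)\\
&\quad=1_{(x_1,\dots,x_n)}\prod_{g\in\Lb}\0_g\circ\af_{\eta_1^g(x_1)}\left(1_{\eta_1^g(x_1)\m}\sum_{j=0}^{n-1}(-1)^{j+1}w\circ\s_{n-1,j}^{\ol{x\m_1 g}}(x_2,\dots,x_n)\right)-w'(x_1,\dots,x_n).
\end{align*}

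The key observation is that, since $(-1)^{j+1}=-(-1)^j$, the first summand on the right-hand side is exactly the negative of the left-hand side of \cref{prod_g-alpha_eta_1^g=alpha_x_1}, and hence equals $-\af_{x_1}(1_{x\m_1}\ve(x_2,\dots,x_n))$ by \cref{alpha_eta_1^g(x_1)}. Substituting this in, the term $-\af_{x_1}(1_{x\m_1}\ve(x_2,\dots,x_n))$ cancels from both sides, which leaves $(\dl^{n-1}\ve)(x_1,\dots,x_n)-w(x_1,\dots,x_n)=-w'(x_1,\dots,x_n)$; as $x_1,\dots,x_n$ were arbitrary, this is precisely $w=\dl^{n-1}\ve+w'$.

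For the last assertion I would note that $w'=w-\dl^{n-1}\ve$ is then a cocycle: $\dl^nw=0$ because $w\in Z^n_{par}(G,\A)$, and $\dl^n(\dl^{n-1}\ve)=0$ because $C^\bullet_{par}(G,\A)$ is a cochain complex (\cref{dl:C^n_par(G_M)->C^(n+1)_par(G_M)}), whence $\dl^nw'=0$, i.e. $w'\in Z^n_{par}(G,\A)$.

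I do not expect any real obstacle here; the only point demanding attention is making sure the inner sums and the enveloping idempotents in \cref{w'-cohom-w-recursion} and \cref{alpha_eta_1^g(x_1)} line up literally term by term (up to the overall sign), so that the cancellation is exact rather than merely formal. The genuine difficulty of the section lies entirely in the preceding lemmas — in particular the inductive collapse of the sums $\Sg(j,j)$ feeding into \cref{w'-cohom-w-recursion} — and not in the theorem itself.
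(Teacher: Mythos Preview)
Your proposal is correct and follows exactly the paper's approach: the paper states the theorem as an immediate consequence of \cref{w'-cohom-w-recursion} and \cref{alpha_eta_1^g(x_1)}, and you have simply written out the one-line cancellation that this entails, together with the routine observation that $w'=w-\dl^{n-1}\ve$ is a cocycle.
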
	
	
	\subsection{Existence and uniqueness of a globalization}\label{sec:existsunique}
	
	Our aim in this section is to complete the construction of $\wtl w$ satisfying \cref{cocycle-ident-for-tilde-w,w-is-restr-of-tilde-w}. We start recalling the formulas from~\cite{DKS} which will be used here as well.
	
	\begin{lem}\label{formulas-for-eta_n^g}
		Let $g\in\Lb'$. Then
		\begin{align}
			\eta_n^g(x_1,\dots,x_n)&=\eta_{n-1}^{\ol{x\m_1g}}(x_2,\dots,x_n),\ n\ge 2,\label{eta_n^g=eta_n-1^ol(x-inv-g)}\\
			\eta_n^g(x_1,\dots,x_i,x_{i+1},\dots,x_n)&=\eta_{n-1}^g(x_1,\dots,x_ix_{i+1},\dots,x_n),\ 1\le i\le n-2,\label{eta_n^g-with-x_ix_i+1-glued}\\
			\eta_n^g(x_1,\dots,x_{n-1},x_nx_{n+1})&=\eta_n^g(x_1,\dots,x_n)\eta_{n+1}^g(x_1,\dots,x_{n+1}),\ n\ge 1.\label{eta_n^g-eta_n+1^g}
		\end{align}
	\end{lem}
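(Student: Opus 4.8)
The plan is to obtain all three identities by a direct expansion of \cref{eta_n^g-def}, reducing everything to three elementary properties of the left transversal $\Lb'$ of $H$: for $y\in G$ and $h\in H$ one has $\ol{yh}=\ol y$ (because $yh$ and $y$ lie in the same left coset of $H$), $\ol{\ol y}=\ol y$ (because $\ol y\in\Lb'$), and $\ol y=y\eta(y)$ with $\eta(y)\in H$ (immediate from $\eta(y)=y\m\ol y$). The first two combine into the \emph{collapsing rule}
\begin{align*}
\ol{y\,\ol z}=\ol{yz}\qquad\text{for all }y,z\in G,
\end{align*}
since $y\,\ol z=yz\,\eta(z)$ with $\eta(z)\in H$. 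This rule is precisely what neutralizes the nested overlines in \cref{eta_n^g-def}.

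First I would settle \cref{eta_n^g=eta_n-1^ol(x-inv-g)} and \cref{eta_n^g-with-x_ix_i+1-glued}, which are pure bookkeeping. For \cref{eta_n^g=eta_n-1^ol(x-inv-g)}, expanding the right-hand side by \cref{eta_n^g-def}, with base point $\ol{x\m_1g}$ and arguments $x_2,\dots,x_n$, gives $\eta\bigl(x\m_n\,\ol{x\m_{n-1}\cdots x\m_2\,\ol{x\m_1g}}\bigr)$; the collapsing rule rewrites the inner overline as $\ol{x\m_{n-1}\cdots x\m_1g}$, so the whole expression is $\eta_n^g(x_1,\dots,x_n)$. For \cref{eta_n^g-with-x_ix_i+1-glued} the hypothesis $i\le n-2$ keeps the last argument equal to $x_n$ on both sides, so both sides have the shape $\eta\bigl(x\m_n\,\ol{\,\cdot\,}\bigr)$; and the group element inside the single overline is, on either side, the reversed product of the inverses of all but the last argument, times $g$, which reduces to $x\m_{n-1}\cdots x\m_1g$ in both cases because inverting $x_ix_{i+1}$ just reinserts $x\m_{i+1}x\m_i$. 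Hence the two sides coincide.

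The only identity with real content is \cref{eta_n^g-eta_n+1^g}. I would set $z:=x\m_n\,\ol{x\m_{n-1}\cdots x\m_1g}$, so that $\eta_n^g(x_1,\dots,x_n)=\eta(z)$ and the left-hand side of \cref{eta_n^g-eta_n+1^g} equals $\eta(x\m_{n+1}z)$; the collapsing rule gives $\ol{x\m_n\cdots x\m_1g}=\ol z$, hence $\eta_{n+1}^g(x_1,\dots,x_{n+1})=\eta(x\m_{n+1}\ol z)$. Thus the assertion becomes the purely transversal identity
\begin{align*}
\eta(x\m_{n+1}z)=\eta(z)\,\eta(x\m_{n+1}\ol z),
\end{align*}
which I would verify by substituting $\eta(y)=y\m\ol y$: the right-hand side is $z\m\ol z\cdot\ol z\m x_{n+1}\,\ol{x\m_{n+1}\ol z}=z\m x_{n+1}\,\ol{x\m_{n+1}z}$ (using $\ol{x\m_{n+1}\ol z}=\ol{x\m_{n+1}z}$ once more), and this is exactly $(x\m_{n+1}z)\m\,\ol{x\m_{n+1}z}=\eta(x\m_{n+1}z)$.

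There is no genuine obstacle in this argument; it is a short chain of substitutions once the collapsing rule has been isolated. The only spots that demand attention are the index bookkeeping in \cref{eta_n^g-with-x_ix_i+1-glued}, where the restriction $i\le n-2$ is exactly what leaves the outermost factor $x\m_n$ untouched, and keeping the correct reversed order of the product living under the single overline. Since these formulas are the ones already recorded in~\cite{DKS}, one may also simply refer to that source.
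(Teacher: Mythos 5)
Your proposal is correct. The paper itself states this lemma without proof, merely recalling it from~\cite{DKS}, so there is no internal argument to compare against; your self-contained verification is exactly the kind of elementary transversal computation that underlies the cited source. The key reduction to the collapsing rule $\ol{y\,\ol z}=\ol{yz}$ (a consequence of $\ol y=y\eta(y)$ with $\eta(y)=y\m\ol y\in H$ and $\ol{yh}=\ol y$ for $h\in H$) is sound, the bookkeeping for \cref{eta_n^g-with-x_ix_i+1-glued} correctly uses $i\le n-2$ to keep the outer factor $x\m_n$ intact, and your computation $\eta(z)\,\eta(x\m_{n+1}\ol z)=z\m x_{n+1}\,\ol{x\m_{n+1}z}=\eta(x\m_{n+1}z)$ settles \cref{eta_n^g-eta_n+1^g}; the low cases ($n=2$ in \cref{eta_n^g=eta_n-1^ol(x-inv-g)}, $n=1$ in \cref{eta_n^g-eta_n+1^g}) are covered since $g\in\Lb'$ gives $\ol g=g$ and $\ol{\ol y}=\ol y$.
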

	
	Define $\wtl{w'}:G^n\to\A$ the same way as it was done in~\cite{DKS}, i.e. by removing $1_{(x_1,\dots,x_n)}$ from \cref{w'-def}:
	\begin{align}\label{w'-tilde-def}
		\wtl{w'}(x_1,\dots,x_n)=\prod_{g\in\Lb}\0_g\circ w\circ\tau_n^g(x_1,\dots,x_n).
	\end{align}
	
	\begin{lem}\label{w'-tilde-is-quasi-cocycle}
		Let $n>0$, $w\in Z_{par}^n(G,A)$ and $x_1,\dots,x_n\in G$. Then
		\begin{align}\label{cocycle-ident-for-tilde-w'}
			\tl\dl^n\wtl{w'}=0.
		\end{align}
	\end{lem}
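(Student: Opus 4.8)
The plan is to evaluate $\tl\dl^n\wtl{w'}$ on an arbitrary tuple $(x_1,\dots,x_{n+1})\in G^{n+1}$ and to recognize the resulting alternating sum, block by block, as $(\dl^n w)\circ\tau_{n+1}^g(x_1,\dots,x_{n+1})$, which vanishes because $w$ is a cocycle. First I would expand $\tl\dl^n\wtl{w'}(x_1,\dots,x_{n+1})$ using \cref{(tl-dl^n-f)(g_1...g_n)} together with the definition \cref{w'-tilde-def} of $\wtl{w'}$. To the first summand $\af_{x_1}(1_{x\m_1}\wtl{w'}(x_2,\dots,x_{n+1}))$ I would apply \cref{apply-af-to-prod-and-switch-with-0} with $a(g)=w\circ\tau_n^g(x_2,\dots,x_{n+1})$ for $g\in\Lb'$; identifying $a(\ol{x\m_1 g})$ by means of \cref{eta_n^g=eta_n-1^ol(x-inv-g)}, this summand becomes $1_{x_1}\prod_{g\in\Lb}\0_g\bigl(\af_{\eta_1^g(x_1)}(1_{\eta_1^g(x_1)\m}w(\eta_2^g(x_1,x_2),\dots,\eta_{n+1}^g(x_1,\dots,x_{n+1})))\bigr)$.

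Next, abbreviating $h_k^g=\eta_k^g(x_1,\dots,x_k)$ so that $(h_1^g,\dots,h_{n+1}^g)=\tau_{n+1}^g(x_1,\dots,x_{n+1})\in H^{n+1}$, the core of the argument is to check — using \cref{eta_n^g-eta_n+1^g} for the value of $\eta_i^g$ at a glued pair $x_ix_{i+1}$ and \cref{eta_n^g-with-x_ix_i+1-glued} for the remaining coordinates — that for each $i$ with $1\le i\le n$ one has $\tau_n^g(x_1,\dots,x_ix_{i+1},\dots,x_{n+1})=(h_1^g,\dots,h_i^gh_{i+1}^g,\dots,h_{n+1}^g)$ and that $\tau_n^g(x_1,\dots,x_n)=(h_1^g,\dots,h_n^g)$; the case $i=n$, where the gluing sits at the end, only needs \cref{eta_n^g-eta_n+1^g}, and $n=1$ is the degenerate instance of the same bookkeeping. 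Since each $\0_g$ is additive and every term carries the factor $1_{x_1}$, collecting the summands gives
\begin{align*}
\tl\dl^n\wtl{w'}(x_1,\dots,x_{n+1})=1_{x_1}\prod_{g\in\Lb}\0_g\Bigl(\af_{h_1^g}\bigl(1_{(h_1^g)\m}w(h_2^g,\dots,h_{n+1}^g)\bigr)+\sum_{i=1}^n(-1)^iw(h_1^g,\dots,h_i^gh_{i+1}^g,\dots,h_{n+1}^g)+(-1)^{n+1}w(h_1^g,\dots,h_n^g)\Bigr).
\end{align*}

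Finally, since each partial product $h_1^g\cdots h_i^g$ lies in the stabilizer $H$, \cref{0_g(a)=0_g(1_xa)} allows $\0_g$ to absorb the idempotents $1_{h_1^g\cdots h_i^g}$; inserting them identifies the argument of $\0_g$ above with $(\dl^n w)(h_1^g,\dots,h_{n+1}^g)=(\dl^n w)\circ\tau_{n+1}^g(x_1,\dots,x_{n+1})$, which equals $0_\A$ because $w\in Z^n_{par}(G,\A)$. Hence $\tl\dl^n\wtl{w'}(x_1,\dots,x_{n+1})=1_{x_1}\prod_{g\in\Lb}\0_g(0_\A)=0_\A$, which is \cref{cocycle-ident-for-tilde-w'}. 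The only delicate step will be the middle one — the coordinate-by-coordinate matching of the faces of $(x_1,\dots,x_{n+1})$ under $\wtl{w'}$ with the faces of $\tau_{n+1}^g(x_1,\dots,x_{n+1})$ under $w$, paying careful attention to which of the three identities in \cref{formulas-for-eta_n^g} applies to which range of indices (and treating the end gluing $i=n$ separately). This is precisely the additive analogue of the computation carried out in the proof of the corresponding statement in~\cite{DKS}, so the routine verifications can be imported from there.
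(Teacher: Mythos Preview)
Your proof is correct and follows essentially the same route as the paper's: expand $\tl\dl^n\wtl{w'}$ via \cref{(tl-dl^n-f)(g_1...g_n)} and \cref{w'-tilde-def}, apply \cref{apply-af-to-prod-and-switch-with-0} to the first summand, use the identities of \cref{formulas-for-eta_n^g} to match the faces of $(x_1,\dots,x_{n+1})$ with those of $\tau_{n+1}^g(x_1,\dots,x_{n+1})$, and invoke \cref{0_g(a)=0_g(1_xa)} to absorb or insert the idempotents $1_{h_1^g\cdots h_i^g}=1_{\eta_1^g(x_1\cdots x_i)}$ (the equality coming from \cref{prod-of-etas}) so that each block reduces to $(\dl^n w)\circ\tau_{n+1}^g(x_1,\dots,x_{n+1})=0$. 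The only cosmetic difference is the order in which you insert the idempotents versus identify the faces, which does not affect the argument.
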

	\begin{proof}
		Using \cref{w'-tilde-def,af_x(1_x-inv-prod-0_g)=1_x-prod-0_g-circ-af,(tl-dl^n-f)(g_1...g_n)}, we rewrite the left-hand side of \cref{cocycle-ident-for-tilde-w'} as follows
		\begin{align}
			&1_{x_1}\prod_{g\in\Lb}\0_g\circ\af_{\eta_1^g(x_1)}\left(1_{\eta_1^g(x_1)\m}w\circ\tau_n^{\ol{x\m_1 g}}(x_2,\dots,x_{n+1})\right)\notag\\
			&\quad+1_{x_1}\prod_{g\in\Lb}\0_g\left(\sum_{i=1}^n(-1)^i w\circ\tau_n^g(x_1,\dots,x_ix_{i+1},\dots,x_{n+1})\right)\label{prod_i=1^n-prod_g-in-Lb}\\
			&\quad+1_{x_1}\prod_{g\in\Lb}\0_g\left((-1)^{n+1} w\circ\tau_n^g(x_1,\dots,x_n)\right).\label{prod_g-in-Lb-0_g-circ-w-circ-tau_n^g}
		\end{align}
		Notice by \cref{0_g(a)=0_g(1_xa)} that one can multiply the $i$-th term of the sum in \cref{prod_i=1^n-prod_g-in-Lb} by $1_{\eta_1^g(x_1\dots x_i)}$.  And similarly, the argument of $\0_g$ in \cref{prod_g-in-Lb-0_g-circ-w-circ-tau_n^g} can be multiplied by $1_{\eta_1^g(x_1\dots x_n)}$ without changing the value of the map. Hence, for \cref{cocycle-ident-for-tilde-w'}, it suffices to prove
		\begin{align}
			0&=\af_{\eta_1^g(x_1)}\left(1_{\eta_1^g(x_1)\m}w\circ\tau_n^{\ol{x\m_1 g}}(x_2,\dots,x_{n+1})\right)\notag\\
			&\quad+\sum_{i=1}^n (-1)^i 1_{\eta_1^g(x_1\dots x_i)} w\circ\tau_n^g(x_1,\dots,x_ix_{i+1},\dots,x_{n+1})\notag\\
			&\quad+(-1)^{n+1}1_{\eta_1^g(x_1\dots x_n)}w\circ\tau_n^g(x_1,\dots,x_n).\label{delta^nw-circ-ta^g_n=0-expanded}
		\end{align}
		As in the proof of~\cite[Lemma~4.2]{DKS}, one shows using \cref{eta_n^g=eta_n-1^ol(x-inv-g),eta_n^g-with-x_ix_i+1-glued,eta_n^g-eta_n+1^g} that \cref{delta^nw-circ-ta^g_n=0-expanded} is an expansion of the cocycle identity
		\begin{align*}
			(\dl^nw)\circ\tau^g_{n+1}(x_1,\dots,x_{n+1})=0.
		\end{align*}
	\end{proof}
	
	Now, for all $n>0$ and $x_1,\dots,x_n\in G$, define $\wtl w:G^n\to\A$ as the sum
	\begin{align}\label{tilde-w-in-terms-of-tl-w'} 
		\wtl w=\wtl{w'}+\hat\dl^{n-1}\ve,
	\end{align} 
	where
	\begin{align}\label{(hat-dl^n-ve)(x_1...x_n)} 
		(\hat\dl^{n-1}\ve)(x_1,\dots,x_n)&=\af_{x_1}(1_{x\m_1}\ve(x_2,\dots,x_n))\notag\\  
		&\quad+\sum_{i=1}^{n-1}(-1)^i \ve(x_1,\dots,x_i x_{i+1},\dots,x_n)\notag\\ 
		&\quad+(-1)^n \ve(x_1,\dots,x_{n-1}).
	\end{align}  
	
	The existence of a globalization is established in the following theorem.
	\begin{thrm}\label{glob-exists} 
		Let $\A$ be a direct product of indecomposable unital rings and $\af=\{\af_g:\cD_{g\m}\to\cD_g\mid g\in G\}$ a (non-necessarily transitive) unital partial action of $G$ on $\A$. Then for any $n\ge 0$ each cocycle $w\in Z_{par}^n(G,\A)$ with values in the induced $\kpar G$-module is globalizable. 
	\end{thrm}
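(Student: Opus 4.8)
The plan is to split by the cohomological degree $n$. For $n=0$ there is nothing to do beyond \cref{0-cocycle-is-globalizable}, so from now on suppose $n\ge 1$. For such $n$ I would invoke the criterion \cref{w-glob-iff-exists-tilde-w}: it is enough to produce, for a given $w\in Z^n_{par}(G,\A)$, a cochain $\wtl w\in C^n(G,\A)$ with $\tl\dl^n\wtl w=0$ and $w(x_1,\dots,x_n)=1_{(x_1,\dots,x_n)}\wtl w(x_1,\dots,x_n)$ for all $x_1,\dots,x_n\in G$. The natural candidate is the one already prepared in \cref{sec:existsunique}, namely $\wtl w=\wtl{w'}+\hat\dl^{n-1}\ve$ of \cref{tilde-w-in-terms-of-tl-w'}, assembled from the cocycle $w'$ that is cohomologous to $w$, the cochain $\ve$ of \cref{w'-and-eps-defn}, and the extension $\wtl{w'}$ of \cref{w'-tilde-def}.

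First I would reduce to the case in which $\af$ acts transitively on the set of blocks, since the constructions of \cref{sec:w'} and \cref{sec:existsunique} rely on identifying the blocks with cosets of a block stabilizer, which requires transitivity. If $\af$ is not transitive, I would partition the set of blocks of $\A=\prod_{\lb\in\Lb}\A_\lb$ into its $G$-orbits; this writes $\A$ as a product $\A=\prod_j\A^{(j)}$ of $G$-invariant direct factors, on each of which $\af$ restricts to a transitive unital partial action $\af^{(j)}$. Since the groups $C^n_{par}(G,-)$, the coboundary maps $\dl^k$, the enveloping action and the passage to the multiplier algebra $\M(\B)$ are all compatible with this product decomposition (exactly as in~\cite{DES2,DKS}), a cocycle $w\in Z^n_{par}(G,\A)$ breaks up as a product of cocycles $w^{(j)}\in Z^n_{par}(G,\A^{(j)})$, and globalizations of the $w^{(j)}$ recombine into a globalization of $w$. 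Hence I may, and do, assume $\af$ transitive, so that all the notation and all the lemmas of \cref{sec:w'} and \cref{sec:existsunique} are at my disposal.

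In the transitive case I would check the two hypotheses of \cref{w-glob-iff-exists-tilde-w} for $\wtl w=\wtl{w'}+\hat\dl^{n-1}\ve$. The restriction identity \cref{w-is-restr-of-tilde-w} is the quicker one: multiplying by $1_{(x_1,\dots,x_n)}$ one has $1_{(x_1,\dots,x_n)}\wtl{w'}(x_1,\dots,x_n)=w'(x_1,\dots,x_n)$ straight from \cref{w'-def,w'-tilde-def}, while $1_{(x_1,\dots,x_n)}(\hat\dl^{n-1}\ve)(x_1,\dots,x_n)=(\dl^{n-1}\ve)(x_1,\dots,x_n)$ because $\dl^{n-1}\ve\in C^n_{par}(G,\A)$ and the idempotent $1_{(x_1,\dots,x_n)}$ absorbs the extra idempotents distinguishing $\dl^{n-1}$ from $\hat\dl^{n-1}$; adding these and using $w=\dl^{n-1}\ve+w'$ from \cref{w'-cohom-w} yields \cref{w-is-restr-of-tilde-w}. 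For the cocycle identity \cref{cocycle-ident-for-tilde-w}, $K$-linearity of $\tl\dl^n$ together with \cref{w'-tilde-is-quasi-cocycle} reduces it to $\tl\dl^n(\hat\dl^{n-1}\ve)=0$, which I would verify by a direct computation: the purely combinatorial terms cancel in the standard simplicial fashion once the leading operator $a\mapsto\af_{x_1}(1_{x\m_1}a)$ is commuted through the sum, the double occurrence of that leading operator collapses via the partial-representation identity $\af_x(1_{x\m}\af_y(1_{y\m}a))=1_x\af_{xy}(1_{(xy)\m}a)$ (a consequence of \cref{from-pMod-to-KparG-mod}), and the remaining idempotents drop out because the values of $\ve$ and of $a\mapsto\af_{x_1}(1_{x\m_1}a)$ already lie in $1_{x_1}\A$. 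With both hypotheses in place, \cref{w-glob-iff-exists-tilde-w} finishes the transitive case.

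The hard part is the cocycle identity \cref{cocycle-ident-for-tilde-w}. It rests on \cref{w'-tilde-is-quasi-cocycle}, whose proof is itself the endpoint of the long recursion in \cref{w'-cohom-w-base-0,w'-cohom-w-base-1,w'-cohom-w-step,w'-cohom-w-final-step,w'-cohom-w-recursion,alpha_eta_1^g(x_1),w'-cohom-w} involving the auxiliary functions $\tau_n^g$ and $\s_{n,i}^g$, and on top of that one has to push through the vanishing $\tl\dl^n\circ\hat\dl^{n-1}=0$ on $C^{n-1}_{par}(G,\A)$; the partial-action idempotents make this last point the only step that genuinely has to be checked by hand (it is the additive analogue of the corresponding computation in~\cite{DKS}). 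By comparison, the transitivity reduction and the restriction identity \cref{w-is-restr-of-tilde-w} are routine once \cref{w'-cohom-w} is available.
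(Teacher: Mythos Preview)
Your proposal is correct and follows essentially the same approach as the paper's proof: both handle $n=0$ via \cref{0-cocycle-is-globalizable}, reduce $n\ge 1$ to the transitive case, take $\wtl w=\wtl{w'}+\hat\dl^{n-1}\ve$, verify \cref{w-is-restr-of-tilde-w} using \cref{w'-cohom-w}, and then verify \cref{cocycle-ident-for-tilde-w} by combining \cref{w'-tilde-is-quasi-cocycle} with the direct cancellation $\tl\dl^n\hat\dl^{n-1}\ve=0$ (your partial-representation identity is exactly what makes the paper's terms \cref{af_x_1(1_x_1-inv-tl-af_x_2(tl-e(x_3_..._x_n+1)))} and \cref{tl-af_x_1x_2(tl-e(x_3_..._x_n+1))} cancel). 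One small inaccuracy in your closing commentary: the proof of \cref{w'-tilde-is-quasi-cocycle} does not rest on the long recursion \cref{w'-cohom-w-base-0,w'-cohom-w-base-1,w'-cohom-w-step,w'-cohom-w-final-step,w'-cohom-w-recursion,alpha_eta_1^g(x_1)}; those lemmas establish \cref{w'-cohom-w}, whereas \cref{w'-tilde-is-quasi-cocycle} is a separate and shorter argument using only the cocycle identity for $w$, \cref{apply-af-to-prod-and-switch-with-0} and \cref{formulas-for-eta_n^g}.
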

	\begin{proof} 
		The case $n=0$ is the existence part of \cref{0-cocycle-is-globalizable}, so let $n>0$. As in the proof of~\cite[Theorem~6.3]{DKS}, it is enough to consider transitive $\af$. The map $\wtl w:G^n\to\A$ defined in \cref{tilde-w-in-terms-of-tl-w'} satisfies \cref{w-is-restr-of-tilde-w}, as 
		\begin{align*}
			1_{(x_1,\dots,x_n)}\wtl w(x_1,\dots,x_n)&=1_{(x_1,\dots,x_n)}\wtl{w'}(x_1,\dots,x_n)+1_{(x_1,\dots,x_n)}(\hat\dl^{n-1}\ve)(x_1,\dots,x_n)\\
			&=w'(x_1,\dots,x_n)+(\dl^{n-1}\ve)(x_1,\dots,x_n)\\ 
			&=w(x_1,\dots,x_n).
		\end{align*}
		for all $x_1,\dots,x_n\in G$ by \cref{w'-def,w'-tilde-def,(tl-dl^n-f)(g_1...g_n),w'-cohom-w,(dl^nf)(g_1...g_(n+1))}.
		
		To apply \cref{w-glob-iff-exists-tilde-w}, it remains to prove \cref{cocycle-ident-for-tilde-w}. Observe by \cref{tilde-w-in-terms-of-tl-w'} that 
		\begin{align*}
			(\tl\dl^n\wtl w)(x_1,\dots,x_{n+1})=(\tl\dl^n\wtl{w'})(x_1,\dots,x_{n+1})+(\tl\dl^n\hat\dl^{n-1}\ve)(x_1,\dots,x_{n+1}),
		\end{align*}
		where the first summand is zero by \cref{w'-tilde-is-quasi-cocycle}. Comparing $\tl\dl^n\hat\dl^{n-1}\ve$ with the classical $\dl^n\dl^{n-1}\ve$, we see that the difference is that instead of a global action one has $\af$, and all the terms to which $\af$ is not applied are multiplied by $1_{x_1}$. The latter terms cancel, and the remaining ones are
		\begin{equation}\label{af_x_1(1_x_1-inv(tl-delta^n-1e))}
			\af_{x_1}\left(1_{x\m_1}(\hat\dl^{n-1}\ve)(x_2,\dots,x_{n+1})\right)
		\end{equation} 
		and the first summands in each
		\begin{align}
			&(-1)^i1_{x_1}(\hat\dl^{n-1}\ve)(x_1,\dots,x_ix_{i+1},\dots,x_{n+1}),\ 1\le i\le n,\label{(tl-delta^n-1-e)(x_1_..._x_ix_i+1_..._x_n+1)}\\
			&(-1)^{n+1}1_{x_1}(\hat\dl^{n-1}\ve)(x_1,\dots,x_n).\label{(tl-delta^n-1-e)(x_1_..._x_n)}
		\end{align} 
		The terms of the expansion of \cref{af_x_1(1_x_1-inv(tl-delta^n-1e))} are
		\begin{align}
			&\af_{x_1}\left(1_{x\m_1}\af_{x_2}\left(1_{x\m_2}\ve(x_3,\dots,x_{n+1})\right)\right),\label{af_x_1(1_x_1-inv-tl-af_x_2(tl-e(x_3_..._x_n+1)))}\\
			&\af_{x_1}\left((-1)^{i-1} 1_{x\m_1}\ve(x_2,\dots,x_i x_{i+1},\dots,x_{n+1})\right),\ 2\le i\le n,\label{af_x_1(1_x_1-inv-tl-e(x_2_..._x_i x_i+1_..._x_n+1))}\\
			&(-1)^n\af_{x_1}\left(1_{x\m_1}\ve(x_2,\dots,x_n)\right),\label{af_x_1(1_x_1-inv-tl-e(x_2_..._x_n))}
		\end{align} 
		while the first summands in \cref{(tl-delta^n-1-e)(x_1_..._x_ix_i+1_..._x_n+1),(tl-delta^n-1-e)(x_1_..._x_n)} are
		\begin{align}
			&-1_{x_1}\af_{x_1x_2}\left(1_{x\m_2 x\m_1}\ve(x_3,\dots,x_{n+1})\right),\label{tl-af_x_1x_2(tl-e(x_3_..._x_n+1))}\\
			&(-1)^i\af_{x_1}\left(1_{x\m_1}\ve(x_2,\dots,x_ix_{i+1},\dots,x_{n+1})\right),\ 2\le i\le n,\label{tl-af_x_1(tl-e(x_2_..._x_ix_i+1_..._x_n+1))}\\
			&(-1)^{n+1}\af_{x_1}\left(1_{x\m_1}\ve(x_2,\dots,x_n)\right).\label{tl-af_x_1(tl-e(x_2_..._x_n))}
		\end{align} 
		Thus, \cref{af_x_1(1_x_1-inv-tl-af_x_2(tl-e(x_3_..._x_n+1)))} cancels with \cref{tl-af_x_1x_2(tl-e(x_3_..._x_n+1))}, \cref{af_x_1(1_x_1-inv-tl-e(x_2_..._x_i x_i+1_..._x_n+1))} with \cref{tl-af_x_1(tl-e(x_2_..._x_ix_i+1_..._x_n+1))}, and \cref{af_x_1(1_x_1-inv-tl-e(x_2_..._x_n))} with \cref{tl-af_x_1(tl-e(x_2_..._x_n))}.
	\end{proof}
	
	We recall the following result from~\cite{DKS}.
	\begin{prop}\label{B-embeds-into-prod_(g-in-Lb')A_g}
		Let $\A$ be a direct product $\prod_{g\in\Lb}\A_g$ of indecomposable unital rings, $\af$ a transitive unital partial action of $	G$ on $\A$ and $(\bt,\B)$ an enveloping action of $(\af,\A)$ with $\A\sst\B$. Then $\B$ embeds as an ideal into $\prod_{g\in\Lb'}\A_g$, where $\A_g$ denotes the ideal $\bt_g(\A_1)$ in $\B$. Moreover, $\M(\B)\cong\prod_{g\in\Lb'}\A_g$, and $\bt^*$ is transitive, when seen as a partial action of $G$ on $\prod_{g\in\Lb'}\A_g$.
	\end{prop}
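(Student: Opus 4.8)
The plan is to show that $\B$ is the internal direct sum of the ideals $\A_g=\bt_g(\A_1)$, $g\in\Lb'$, and then to read the three assertions off this decomposition. First I would record that each $\A_g$ is an ideal of $\B$. Since $\A$ is an ideal of $\B$, for $a\in\A$ and $b\in\B$ one has $ba=1_\A(ba)$ and $ab=(ab)1_\A$; combining this with the centrality of the block idempotents of $\A$ shows that each block $\A_\lb=1_{\A_\lb}\A$ ($\lb\in\Lb$) is an ideal of $\B$ on which $1_{\A_\lb}$ acts as a central idempotent. Hence $\A_1$ is an ideal of $\B$, so $\A_g=\bt_g(\A_1)$ is an ideal of $\B$ and $1_{\A_g}:=\bt_g(1_{\A_1})$ is a central idempotent of $\B$. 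As $\bt_k$ restricts to a ring automorphism of $\A_1$ for $k\in H$, it fixes $1_{\A_1}$, so $\A_g$ and $1_{\A_g}$ depend only on the coset $gH$, and more generally $\bt_x(\A_h)=\A_{\ol{xh}}$ for $x\in G$ and $h\in\Lb'$. Using \cref{B=sum-bt_g(f(A))} (with $\f$ the inclusion) I then get $\B=\sum_{x\in G}\bt_x(\A)=\sum_{x\in G,\,h\in\Lb}\bt_x(\A_h)=\sum_{g\in\Lb'}\A_g$, with every element of $\B$ lying in a finite subsum.

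The heart of the matter is the orthogonality $1_{\A_g}1_{\A_{g'}}=0$ for $g\ne g'$ in $\Lb'$; applying the automorphism $\bt_{g\m}$ reduces this to $1_{\A_1}1_{\A_h}=0$ for $h\in\Lb'\setminus\{1\}$, since $\ol{g\m g'}\ne 1$. For $h\in\Lb$ this is just the orthogonality of distinct block idempotents of $\A$. For $h\in\Lb'\setminus\Lb$, indecomposability of $\A_1$ means $1_{\A_1}$ is a primitive central idempotent of $\A$, so $1_{\A_1}1_{h\m}\in\{0,1_{\A_1}\}$; it cannot equal $1_{\A_1}$, for that would give $\A_1\sst\cD_{h\m}$ and hence $h\in\Lb$ by \cref{Lb-and-Lb'}. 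Thus $\A_1\cap\cD_{h\m}=0$, so $\A_h\cap\cD_h=\bt_h(\A_1\cap\cD_{h\m})=0$, and since $\A_h\sst\bt_h(\A)$ while $\cD_h=\A\cap\bt_h(\A)$ we obtain $\A_h\cap\A=\A_h\cap\cD_h=0$; in particular $1_{\A_h}1_\A=0$, whence $1_{\A_1}1_{\A_h}=0$. Therefore $\{1_{\A_g}\}_{g\in\Lb'}$ is a family of pairwise orthogonal central idempotents of $\B$ with $\B=\sum_{g\in\Lb'}\A_g$, i.e. $\B=\bigoplus_{g\in\Lb'}\A_g$ as an internal direct sum of ideals. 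Consequently $b\mapsto(1_{\A_g}b)_{g\in\Lb'}$ is an injective ring homomorphism of $\B$ onto the external direct sum $\bigoplus_{g\in\Lb'}\A_g$, which is an ideal of $\prod_{g\in\Lb'}\A_g$.

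For the multiplier algebra I would invoke the standard description of multipliers of a direct sum of unital ideals (cf.\ \cite{DdRS}): the ring $\bigoplus_{g\in\Lb'}\A_g$ has local units $1_{\A_g}$, a multiplier of it preserves each summand $\A_g$ and restricts there to a multiplier of the unital ring $\A_g$, i.e.\ to an element of $\A_g$, and conversely any family $(m_g)_{g\in\Lb'}\in\prod_{g\in\Lb'}\A_g$ assembles into a multiplier. This yields $\M(\B)\cong\prod_{g\in\Lb'}\A_g$, compatibly with the embedding of $\B$ above. Finally, $\bt^*_g$ restricts on $\B\sst\M(\B)$ to $\bt_g$, so under the identification $\M(\B)\cong\prod_{g\in\Lb'}\A_g$ it permutes the factors by $\bt^*_g(\A_h)=\bt_g(\A_h)=\A_{\ol{gh}}$; given $g,h\in\Lb'$ the element $hg\m$ satisfies $\bt^*_{hg\m}(\A_g)=\A_{\ol{hg\m g}}=\A_h$, which is exactly transitivity of $\bt^*$ viewed as a (global, hence in particular partial) action on $\prod_{g\in\Lb'}\A_g$.

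The step I expect to be the main obstacle is the orthogonality claim, and within it the treatment of the ``new'' blocks $\A_h$ with $h\in\Lb'\setminus\Lb$: one must show that these meet $\A$ trivially, which is precisely where indecomposability of the blocks of $\A$ and the transversal description in \cref{Lb-and-Lb'} are essential. Everything else is bookkeeping with central idempotents and the defining properties of the enveloping action.
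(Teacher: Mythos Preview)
The paper does not actually prove this proposition; it is recalled from \cite{DKS} without argument, so there is no proof in the paper to compare against. Your strategy---establish that the $1_{\A_g}$ are pairwise orthogonal central idempotents of $\B$, embed $\B$ in $\prod_{g\in\Lb'}\A_g$ via $b\mapsto(1_{\A_g}b)_g$, and then read off $\M(\B)$ and transitivity---is the right one, and your orthogonality argument (especially the case $h\in\Lb'\setminus\Lb$, using primitivity of $1_{\A_1}$ and \cref{Lb-and-Lb'}) is correct.

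There is, however, a genuine gap when $\Lb$ is infinite. You write $\bt_x(\A)=\sum_{h\in\Lb}\bt_x(\A_h)$ and conclude $\B=\bigoplus_{g\in\Lb'}\A_g$. But $\A=\prod_{h\in\Lb}\A_h$ is the \emph{full} direct product, so already $1_\A$ is not a finite sum of the $1_{\A_h}$ and $\A\ne\sum_{h\in\Lb}\A_h$. Consequently $\B$ is in general strictly larger than $\bigoplus_{g\in\Lb'}\A_g$: for instance, take a global action with $\Lb=\Lb'$ infinite, where $\B=\A=\prod_{g\in\Lb}\A_g$. Your identification of $\B$ with the external direct sum therefore fails, and with it your description of $\M(\B)$ as the multiplier algebra of $\bigoplus_g\A_g$. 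What remains valid is the map $\psi:b\mapsto(1_{\A_g}b)_g$ into $\prod_{g\in\Lb'}\A_g$; but injectivity now requires a separate argument (e.g.\ from $1_{\A_g}b=0$ for all $g$ deduce $\bt_x(1_\A)b=0$ for every $x$, then use that $b=e_Fb$ for $e_F$ the join of finitely many $\bt_x(1_\A)$), and showing that $\psi(\B)$ is an ideal and that $\M(\B)\cong\prod_g\A_g$ must be done directly for $\psi(\B)$ rather than for the direct sum. Your transitivity argument for $\bt^*$ is unaffected and is fine.
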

	
	This permits us to obtain the uniqueness of a globalization.
	\begin{thrm}\label{uniqueness-of-glob}
		Let $\A$ be a direct product $\prod_{g\in\Lb}\A_g$ of indecomposable unital rings, $\af$ a unital partial action of $G$ on $\A$ and $w_i\in Z_{par}^n(G,\A)$, $i=1,2$ ($n>0$). Suppose that $(\bt,\B)$ is an enveloping action of $(\af,\A)$ and $u_i\in Z^n(G,\M(\B))$ is a globalization of $w_i$, $i=1,2$. If $w_1$ is cohomologous to $w_2$, then $u_1$ is cohomologous to $u_2$. In particular, any two globalizations of the same partial $n$-cocycle are cohomologous.
	\end{thrm}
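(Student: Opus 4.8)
The statement amounts to the injectivity, for $n>0$, of the homomorphism $\rho_*\colon H^n(G,\M(\B))\to H^n_{par}(G,\A)$ induced by the restriction map $\rho$ (see \cref{restr-hom-of-cohom-groups}): granting it, from $w_1$ cohomologous to $w_2$ and $\rho(u_i)=w_i$ we get $\rho_*[u_1]=[w_1]=[w_2]=\rho_*[u_2]$, hence $[u_1]=[u_2]$, i.e.\ $u_1$ cohomologous to $u_2$, and the ``in particular'' is the case $w_1=w_2$. Since \cref{glob-exists} already gives surjectivity of $\rho_*$, this injectivity also yields the isomorphism \cref{H^n(G_A)-cong-H^n(G_B)}. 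So the plan is to prove: \emph{if $u\in Z^n(G,\M(\B))$ and $\rho(u)\in B^n_{par}(G,\A)$, then $u\in B^n(G,\M(\B))$.}

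As in the proof of \cref{glob-exists} one reduces first to a transitive $\af$, since $\A$ is the product of its transitive pieces and $Z^n$, $B^n$, $\M(\B)$ and $\rho$ split along that decomposition. With $\af$ transitive, \cref{B-embeds-into-prod_(g-in-Lb')A_g} identifies $\M(\B)$ with $\prod_{g\in\Lb'}\A_g$, inside which $\A=\prod_{g\in\Lb}\A_g$ is a unital ideal and $\rho(u)=1_{(\cdot)}u$. Moreover $(\prod_{g\in\Lb'}\A_g,\bt^*)$ is exactly the coinduced $G$-module $\mathrm{CoInd}_H^G(\A_1)$, where $\A_1=\A_{\lb_0}$ carries the honest $H$-action $h\mapsto\af_h|_{\A_1}$ and $\Lb'$ is identified with $G/H$. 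Under the associated Shapiro isomorphism $H^n(G,\M(\B))\cong H^n(H,\A_1)$, a cocycle $u$ is sent to (the class of) the $n$-cochain $\ol u$ of $H$ with values in $\A_1$ sending $(h_1,\dots,h_n)$ to the component of $u(h_1,\dots,h_n)$ in the factor $\A_1$.

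The point is now to compute $\ol u$ using $\rho(u)\in B^n_{par}(G,\A)$. Write $\rho(u)=\dl^{n-1}v$ with $v\in C^{n-1}_{par}(G,\A)$ and let $\ol v\in C^{n-1}(H,\A_1)$ be the $H$-cochain whose value at $(h_1,\dots,h_{n-1})$ is the $\A_1$-component of $v(h_1,\dots,h_{n-1})$. For $h_1,\dots,h_n\in H$ every idempotent $1_{h_1\cdots h_i}$, as well as $1_{h_1^{-1}}$, has $\A_1$-component equal to $1_{\A_1}$ — because $\A_1\sst\cD_y$ for every $y\in H$ — while $\af_{h_1}$ restricted to $\A_1$ is just the $H$-action. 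Taking $\A_1$-components in the defining formula of $\dl^{n-1}v$ therefore yields $\ol{\rho(u)}=\dl^{n-1}\ol v$, the $H$-coboundary; on the other hand $\ol{\rho(u)}=\ol u$, again because $1_{(h_1,\dots,h_n)}$ has $\A_1$-component $1_{\A_1}$. Hence $\ol u=\dl^{n-1}\ol v$ is a coboundary in $C^n(H,\A_1)$, so $[u]=0$ under the Shapiro isomorphism and thus $u\in B^n(G,\M(\B))$, which completes the argument.

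The step I expect to need the most care is making precise the coinduced-module description of $(\M(\B),\bt^*)$ and the cochain-level form of the Shapiro isomorphism, in accordance with the conventions of \cref{B-embeds-into-prod_(g-in-Lb')A_g} (the left transversal $\Lb'$, the isomorphisms $\bt_g\colon\A_1\to\A_g$); the remaining verifications are routine. A more self-contained route, closer to the methods of \cref{glob-exists} and of \cite{DKS} and avoiding Shapiro, would be: reduce to $\rho(u)=0$ (since $1_{(x_1,\dots,x_{n-1})}\M(\B)=1_{(x_1,\dots,x_{n-1})}\A$ lets one extend $v$ to an $\M(\B)$-valued cochain and subtract its coboundary), invoke \cref{w-glob-iff-exists-tilde-w} to get $\wtl w\in C^n(G,\A)$ with $\tl\dl^n\wtl w=0$ and $1_{(\cdot)}\wtl w=0$, show that any such $\wtl w$ is $\tl\dl^{n-1}$ of a cochain built from the maps $\eta_n^g$, $\tau_n^g$, $\s_{n,i}^g$, and push this through the linear correspondence $\wtl w\mapsto u$ of \cref{u-def-n>0}; this would require the coboundary analogues of \cref{w'-cohom-w-base-0} through \cref{w'-cohom-w-recursion} and carries the computational weight.
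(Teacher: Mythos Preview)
Your argument is correct but takes a genuinely different route from the paper's. You reduce to the transitive case, identify $(\M(\B),\bt^*)$ with the coinduced $G$-module $\mathrm{CoInd}_H^G(\A_1)$, and invoke Shapiro's lemma: the cochain map ``restrict to $H^n$ and take the $\A_1$-component'' realizes the Shapiro isomorphism $H^n(G,\M(\B))\cong H^n(H,\A_1)$, and you check (correctly, since every $1_{h_1\cdots h_i}$ with $h_i\in H$ acts as $1_{\A_1}$ on $\A_1$) that this map factors through $\rho$ and sends partial coboundaries to $H$-coboundaries. The paper instead stays entirely within the explicit machinery already built for \cref{glob-exists}: it defines the global analogue $u'_i(x_1,\dots,x_n)=\prod_{g\in\Lb'}\vt_g\circ u_i\circ\tau_n^g(x_1,\dots,x_n)$ of $w'$ (with $\vt_g=\bt_g\circ\pr_1$ replacing $\0_g$), notes by the global version of \cref{w'-cohom-w} that $u'_i$ is cohomologous to $u_i$, observes that $u'_i$ actually depends only on $w_i$ (one may replace $u_i$ by $w_i$ in the formula), and then checks directly that $w_2=w_1+\dl^{n-1}\xi$ forces $u'_2=u'_1+\dl^{n-1}\xi'$.

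What each approach buys: yours is shorter and more conceptual, and makes transparent the structural fact that in the transitive case $H^n_{par}(G,\A)\cong H^n(G,\M(\B))\cong H^n(H,\A_1)$; the price is the external input (the coinduced-module identification and the cochain-level form of Shapiro, which you rightly flag as needing care). The paper's approach is entirely self-contained, reuses the maps $\tau_n^g$ and the averaging operators already developed in \cref{sec:w'}, and avoids appealing to any general machinery; it is in effect a hands-on implementation of the same Shapiro-type averaging. Your ``more self-contained route'' sketched at the end is close in spirit to what the paper does, though the paper does not pass through $\rho(u)=0$ but works with the $u'_i$ directly.
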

	\begin{proof}
		As in the proof of~\cite[Theorem~5.3]{DKS}, we consider only the transitive case and assume, without loss of generality, that $\M(\B)=\prod_{g\in\Lb'}\A_g\supseteq\A$. Then we define
		the homomorphism $\vt_g:\M(\B)\to\M(\B)$ by
		\begin{align}\label{vt_g-def}
			\vt_g=\bt_g\circ\pr_1
		\end{align}
		and $u'_i\in C^n(G,\M(\B))$ by
		\begin{align}\label{u'_i-def}
			u'_i(x_1,\dots,x_n)=\prod_{g\in\Lb'}\vt_g\circ u_i\circ\tau_n^g(x_1,\dots,x_n),\ i=1,2.
		\end{align}
		We infer that $u'_i\in Z^n(G,\M(\B))$ and $u_i$ is cohomologous to $u'_i$, $i=1,2$, because the definition of $u'_i$ is totally analogous to that of $w'$ (compare \cref{u'_i-def} with \cref{w'-def} and \cref{vt_g-def} with \cref{0_g-def}, see also \cref{w'-cohom-w}). 
		
		Suppose that $w_1$ is cohomologous to $w_2$. To prove that $u_1$ is cohomologous to $u_2$, it is enough to establish the cohomological equivalence of $u'_1$ and $u'_2$. Observe as in the proof of~\cite[Theorem~5.3]{DKS} that
		\begin{align}\label{u'_i-in-terms-of-w} 
			u'_i(x_1,\dots,x_n)=\prod_{g\in\Lb'}\vt_g\circ w_i\circ\tau_n^g(x_1,\dots,x_n),\ i=1,2.
		\end{align}
		If $w_2=w_1+\dl^{n-1}\xi$ for some $\xi\in C_{par}^{n-1}(G,\A)$, then by \cref{u'_i-in-terms-of-w} one readily gets $u'_2=u'_1+(\dl^{n-1}\xi)'$, where
		$$
		(\dl^{n-1}\xi)'(x_1,\dots,x_n)=\prod_{g\in\Lb'}\vt_g\circ (\dl^{n-1}\xi)\circ\tau_n^g(x_1,\dots,x_n).
		$$
		It is enough to prove that
		\begin{align}\label{(dl^(n-1)-xi)'=dl^(n-1)-xi'}
			(\dl^{n-1}\xi)'=\dl^{n-1}\xi',
		\end{align}
		where
		$$
		\xi'(x_1,\dots,x_{n-1})=\prod_{g\in\Lb'}\vt_g\circ\xi\circ\tau_{n-1}^g(x_1,\dots,x_{n-1}).
		$$
		Expanding $(\dl^{n-1}\xi)'(x_1,\dots,x_n)$, we see that the idempotents $1_{\eta_i^g(x_1\dots x_i)}$ which appear in the expansion may be removed, because the analogue of \cref{0_g(a)=0_g(1_xa)} holds for $\vt_g$ too (compare \cref{vt_g-def} with \cref{0_g-def}). Thus, due to the fact that $\vt_g$ is a homomorphism, \cref{(dl^(n-1)-xi)'=dl^(n-1)-xi'} reduces to
		\begin{align*}
			&\beta_{x_1}\left(\prod_{g\in\Lb'}\vt_g\circ\xi\circ\tau^g_{n-1}(x_2,\dots,x_n)\right)\notag\\
			&=\prod_{g\in\Lb'}\vt_g\circ \bt_{\eta^g(x_1)}\circ\xi(\eta^g_2(x_1,x_2),\dots,\eta^g_n(x_1,\dots,x_n))\\
			&=\prod_{g\in\Lb'}\vt_g\circ \bt_{\eta^g(x_1)}\circ\xi\circ\tau^{\ol{x\m_1g}}_{n-1}(x_2,\dots,x_n),
		\end{align*}
		which follows from the global version of \cref{apply-af-to-prod-and-switch-with-0}.
	\end{proof}
	
	\begin{cor}\label{H^n(G_A)-cong-H^n(G_B)}
		Let $\A$ be a direct product $\prod_{g\in\Lb}\A_g$ of indecomposable unital rings, $\af$ a partial action of $G$ on $\A$ and $(\bt,\B)$ an enveloping action of $(\af,\A)$. Then $H_{par}^n(G,\A)$ is isomorphic to the classical cohomology group $H^n(G,\M(\B))$.
	\end{cor}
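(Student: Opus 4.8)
The plan is to prove that the homomorphism $\rho_\ast\colon H^n(G,\M(\B))\to H^n_{par}(G,\A)$ induced in \cref{restr-hom-of-cohom-groups} by the restriction map is an isomorphism; its inverse sends the class of a partial $n$-cocycle to the class of any of its globalizations, an assignment which is meaningful precisely thanks to \cref{glob-exists,uniqueness-of-glob}. Since the substantial work has already been carried out in the preceding sections, the proof reduces to assembling \cref{restr-hom-of-cohom-groups,0-cocycle-is-globalizable,glob-exists,uniqueness-of-glob} and verifying that $\rho_\ast$ is bijective; being a group homomorphism, it is then automatically an isomorphism.

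First I would handle $n=0$ on its own, because \cref{uniqueness-of-glob} is stated for $n>0$. In degree $0$ there are no coboundaries, so $H^0(G,\M(\B))=Z^0(G,\M(\B))$, while $H^0_{par}(G,\A)=Z^0_{par}(G,\A)$ by \cref{H^n_par(G_M)=ker-dl^n-mod-im-dl^(n-1)}. By \cref{restr-hom-of-cohom-groups} the restriction map $\rho$ carries $Z^0(G,\M(\B))$ into $Z^0_{par}(G,\A)$, and \cref{0-cocycle-is-globalizable} says exactly that this map is bijective: existence of a globalization gives surjectivity and its uniqueness gives injectivity. Hence $\rho_\ast$ is an isomorphism when $n=0$.

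Now let $n>0$. Surjectivity of $\rho_\ast$ is immediate from \cref{glob-exists}: given $w\in Z^n_{par}(G,\A)$, a globalization $u\in Z^n(G,\M(\B))$ satisfies $\rho(u)=w$ by \cref{w-is-restr-of-u}, so $\rho_\ast[u]=[w]$. For injectivity, suppose $[u]\in H^n(G,\M(\B))$ has $\rho_\ast[u]=0$, that is, $w:=\rho(u)$ lies in $B^n_{par}(G,\A)$ and hence is cohomologous to the zero cocycle $0\in Z^n_{par}(G,\A)$. The cocycle $u$ is a globalization of $w$ by the very definition of $\rho$, while $0\in Z^n(G,\M(\B))$ is trivially a globalization of $0\in Z^n_{par}(G,\A)$, since both sides of \cref{w-is-restr-of-u} vanish. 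Applying \cref{uniqueness-of-glob} to the cohomologous pair $w,0$ yields that $u$ is cohomologous to $0$, so $[u]=0$. Thus $\rho_\ast$ is bijective and the corollary follows.

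I do not expect a real obstacle here: everything difficult is already behind us. The only points demanding some attention are the separate treatment of the degree $0$ case and the observation that \cref{glob-exists,uniqueness-of-glob} already incorporate the non-necessarily transitive situation (each reduces internally to transitive partial actions), so no further reduction is needed at the level of the corollary.
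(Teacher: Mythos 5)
Your proposal is correct and follows essentially the same route as the paper: the restriction homomorphism of \cref{restr-hom-of-cohom-groups} is shown to be bijective via \cref{glob-exists} (surjectivity) and \cref{uniqueness-of-glob} (injectivity) for $n>0$, with the degree-zero case settled separately by \cref{0-cocycle-is-globalizable}. Your write-up merely spells out in detail the argument the paper states tersely.
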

	\noindent Indeed, we know by \cref{restr-hom-of-cohom-groups} that for all $n\ge 0$ there is a homomorphism $H^n(G,\M(\B))\to H^n_{par}(G,\A)$ coming from the restriction map. By \cref{glob-exists,uniqueness-of-glob} this homomorphism is invertible, when $n>0$. The case $n=0$ is \cref{0-cocycle-is-globalizable}.
	
	\section*{Acknowledgments}
	This work was partially supported by CNPq of Brazil (Proc. 307873/2017-0, 404649/2018-1),  FAPESP of Brazil (Proc. 2015/09162-9), MINECO (MTM2016-77445-P), Fundaci\'on S\'eneca of Spain and Funda\c{c}\~ao para a Ci\^{e}ncia e a Tecnologia (Portuguese Foundation for Science and Technology) through the project PTDC/MAT-PUR/31174/2017. The first two authors would also like to thank the Department of Mathematics of the University of Murcia for its warm hospitality during their visits.
	
	\bibliography{bibl-pact}{}

\begin{thebibliography}{10}

\bibitem{AbadieTwo}
{\sc Abadie, F.}
\newblock {Enveloping actions and Takai duality for partial actions}.
\newblock {\em J. Funct. Anal. 197}, 1 (2003), 14--67.

\bibitem{AAR}
{\sc Alvares, E.~R., Alves, M.~M., and Redondo, M.~J.}
\newblock Cohomology of partial smash products.
\newblock {\em J. Algebra 482\/} (2017), 204--223.

\bibitem{AlDoKo}
{\sc Alves, M.~M., Dokuchaev, M., and Kochloukova, D.}
\newblock Homology and cohomology via the partial group algebra.
\newblock {\em Preprint\/}.

\bibitem{BaMoTe}
{\sc Batista, E., Mortari, A. D.~M., and Teixeira, M.~M.}
\newblock Cohomology for partial actions of {H}opf algebras.
\newblock {\em J. Algebra 528\/} (2019), 339--380.

\bibitem{CaenJan}
{\sc Caenepeel, S., and Janssen, K.}
\newblock Partial (co)actions of {H}opf algebras and partial {H}opf-{G}alois
  theory.
\newblock {\em Comm. Algebra 36}, 8 (2008), 2923--2946.

\bibitem{D3}
{\sc Dokuchaev, M.}
\newblock Recent developments around partial actions.
\newblock {\em S\~ao Paulo J. Math. Sci. 13}, 1 (2019), 195--247.

\bibitem{DdRS}
{\sc Dokuchaev, M., del R{\'\i}o, {\'A}., and Sim{\'o}n, J.~J.}
\newblock {Globalizations of partial actions on nonunital rings}.
\newblock {\em Proc. Am. Math. Soc. 135}, 2 (2007), 343--352.

\bibitem{DE}
{\sc Dokuchaev, M., and Exel, R.}
\newblock {Associativity of crossed products by partial actions, enveloping
  actions and partial representations}.
\newblock {\em Trans. Amer. Math. Soc. 357}, 5 (2005), 1931--1952.

\bibitem{DEP}
{\sc Dokuchaev, M., Exel, R., and Piccione, P.}
\newblock {Partial representations and partial group algebras}.
\newblock {\em J. Algebra 226}, 1 (2000), 251--268.

\bibitem{DES1}
{\sc Dokuchaev, M., Exel, R., and Sim{\'o}n, J.~J.}
\newblock {Crossed products by twisted partial actions and graded algebras}.
\newblock {\em J. Algebra 320}, 8 (2008), 3278--3310.

\bibitem{DES2}
{\sc Dokuchaev, M., Exel, R., and Sim{\'o}n, J.~J.}
\newblock {Globalization of twisted partial actions}.
\newblock {\em Trans. Amer. Math. Soc. 362}, 8 (2010), 4137--4160.

\bibitem{DFP}
{\sc Dokuchaev, M., Ferrero, M., and Paques, A.}
\newblock {Partial Actions and {G}alois Theory}.
\newblock {\em J. Pure Appl. Algebra 208}, 1 (2007), 77--87.

\bibitem{DK}
{\sc Dokuchaev, M., and Khrypchenko, M.}
\newblock {Partial cohomology of groups}.
\newblock {\em J. Algebra 427\/} (2015), 142--182.

\bibitem{DK2}
{\sc Dokuchaev, M., and Khrypchenko, M.}
\newblock Twisted partial actions and extensions of semilattices of groups by
  groups.
\newblock {\em Int. J. Algebra Comput. 27}, 7 (2017), 887--933.

\bibitem{DK3}
{\sc Dokuchaev, M., and Khrypchenko, M.}
\newblock Partial cohomology of groups and extensions of semilattices of
  abelian groups.
\newblock {\em J. Pure Appl. Algebra 222\/} (2018), 2897--2930.

\bibitem{DKS}
{\sc Dokuchaev, M., Khrypchenko, M., and Sim{\'o}n, J.~J.}
\newblock {Globalization of partial cohomology of groups}.
\newblock {\em {\tt arXiv:1706.02546}\/} (2017).

\bibitem{DN}
{\sc Dokuchaev, M., and Novikov, B.}
\newblock {Partial projective representations and partial actions}.
\newblock {\em J. Pure Appl. Algebra 214}, 3 (2010), 251--268.

\bibitem{DN2}
{\sc Dokuchaev, M., and Novikov, B.}
\newblock {Partial projective representations and partial actions {II}}.
\newblock {\em J. Pure Appl. Algebra 216}, 2 (2012), 438--455.

\bibitem{DoNoPi}
{\sc Dokuchaev, M., Novikov, B., and Pinedo, H.}
\newblock {The partial {S}chur Multiplier of a group}.
\newblock {\em J. Algebra 392\/} (2013), 199--225.

\bibitem{DoPaPi}
{\sc Dokuchaev, M., Paques, A., and Pinedo, H.}
\newblock Partial galois cohomology and related homomorphisms.
\newblock {\em Quarterly J. Math. 70}, 2 (2019), 737--766.

\bibitem{DoPaPiRo}
{\sc Dokuchaev, M., Paques, A., Pinedo, H., and Rocha, I.}
\newblock Partial generalized crossed products and a seven-term exact sequence.
\newblock {\em {\tt arXiv:1908.05820}\/} (2019).

\bibitem{DoSa}
{\sc Dokuchaev, M., and Sambonet, N.}
\newblock Schur's theory for partial projective representations.
\newblock {\em Israel J. Math. 232}, 1 (2019), 373--399.

\bibitem{DZh}
{\sc Dokuchaev, M., and Zhukavets, N.}
\newblock On finite degree partial representations of groups.
\newblock {\em J. Algebra 274}, 1 (2004), 309--334.

\bibitem{E0}
{\sc Exel, R.}
\newblock {Twisted partial actions: a classification of regular
  {$C^*$}-algebraic bundles}.
\newblock {\em Proc. London Math. Soc. 74}, 3 (1997), 417--443.

\bibitem{E1}
{\sc Exel, R.}
\newblock {Partial actions of groups and actions of inverse semigroups}.
\newblock {\em Proc. Amer. Math. Soc. 126}, 12 (1998), 3481--3494.

\bibitem{KL}
{\sc Kellendonk, J., and Lawson, M.~V.}
\newblock {Partial actions of groups}.
\newblock {\em Internat. J. Algebra Comput. 14}, 1 (2004), 87--114.

\bibitem{KennedySchafhauser}
{\sc Kennedy, M., and Schafhauser, C.}
\newblock Noncommutative boundaries and the ideal structure of reduced crossed
  products.
\newblock {\em To appear in Duke Math. J. doi:10.1215/00127094-2019-0032\/}
  (2019).

\bibitem{Maclane}
{\sc Maclane, S.}
\newblock {\em {Homology}}.
\newblock Springer-Verlag, Berlin-Guttingen-Heidelberg, 1963.

\bibitem{Passman}
{\sc Passman, D.~S.}
\newblock {\em A course in ring theory}.
\newblock The Wadsworth \& Brooks/Cole Mathematics Series. Wadsworth \&
  Brooks/Cole Advanced Books \& Software, Pacific Grove, CA, 1991.

\end{thebibliography}
	\bibliographystyle{acm}
	
\end{document}